\numberwithin{equation}{section}
\newtheorem{theorem}{Theorem}[section]
\newtheorem{definition}{Definition}[section]
\newtheorem{lemma}{Lemma}[section]
\newtheorem{proposition}{Proposition}[section]
\newtheorem{corollary}{Corollary}[section]
\newtheorem{remark}{Remark}[section]
\begin{document}
\title[KdV equation: global well-posedness on star-graphs]{A Nonhomogeneous Boundary-Value Problem For The Nonlinear KdV Equation on Star Graphs}
	\author[Capistrano--Filho]{Roberto de A. Capistrano--Filho}
	\author[Parada]{Hugo Parada$^*$}
	\author[da Silva]{Jandeilson Santos da Silva}

	\address{Departamento de Matem\'atica, Universidade Federal de Pernambuco, S/N Cidade Universit\'aria, 50740-545, Recife (PE), Brazil}
	\email{\url{roberto.capistranofilho@ufpe.br}}
	\email{\url{jandeilson.santos@ufpe.br}}
	
	\address{Universit\'e de Lorraine, CNRS, INRIA, IECL, F-54000 Nancy, France.}
	\email{\url{hugo.parada@inria.fr}}
	
	\subjclass[2020]{35Q53, 35G31, 35R02}
	\keywords{Star-Shaped Network, KdV equation, Global well-posedness, Sharp regularity}
	\thanks{$^*$Corresponding author: \url{hugo.parada@inria.fr}}
	\thanks{Capistrano–Filho was partially supported by CAPES/COFECUB grant number 88887.879175/2023-00, CNPq grant numbers 421573/2023-6 and 301744/2025-4, and Propesqi (UFPE). Parada is supported by Agence Nationale de la Recherche under the QuBiCCS project ANR-24-CE40-3008. Da Silva acknowledges support from CNPq.}

	\numberwithin{equation}{section}
	
	\begin{abstract}
This paper investigates a boundary-value problem for the Korteweg-de Vries (KdV) equation on a star-graph structure. We develop a unified framework introducing the notion of $s$-compatibility, which generalizes classical compatibility conditions to star-shaped and more complex graph configurations, inspired by the works of Bona, Sun, and Zhang \cite{BSZ 2003}. By combining analytical techniques with a fixed-point argument, we establish sharp global well-posedness for both the linear and nonlinear problems at the $H^s$ level. In this setting, our results extend the classical analysis for a single KdV equation \cite{BSZ 2003} to star-shaped graphs composed of $N$ equations. These results provide the first comprehensive well-posedness theory for KdV equations with coupled boundary conditions on graphs. Although control issues are not treated in this article, the analytic results obtained here address several open problems, which will be addressed in a forthcoming paper.
\end{abstract}
	
	\date{\today}
	\maketitle
	
	\tableofcontents
	
	
	\section{Introduction}	
	In mathematics and physics, a quantum graph is a network composed of vertices connected by edges, where each edge carries a differential or pseudo-differential equation. When every edge is endowed with a natural metric, the structure is referred to as a metric graph. A common illustration is a power network: the transmission lines act as edges connecting transformer stations (vertices). In such a case, the governing differential equations may describe the voltage along each line, with boundary conditions at the vertices ensuring that the total current flowing into and out of each vertex sums to zero.

The concept of quantum graphs traces back to the work of Linus Pauling in the 1930s, who used them to model free electrons in organic molecules. Since then, they have emerged in a wide range of contexts, including quantum chaos, waveguide analysis, photonic crystals, and Anderson localization—the suppression of wave propagation in disordered media. Quantum graphs also arise as limit models of systems composed of thin wires and serve as important theoretical tools in mesoscopic physics and nanotechnology. A simplified formulation of quantum graphs was later proposed by Freedman \textit{et al.} in \cite{Freedman}.

Beyond solving the differential equations in specific applications, the study of quantum graphs encompasses fundamental questions such as well-posedness, controllability, and identifiability. For example, controllability concerns determining the inputs required to steer the system toward a desired configuration, such as ensuring stable power distribution across a grid. Identifiability, on the other hand, focuses on optimal measurement placement—for instance, choosing pressure gauges in a water network to detect and locate potential leaks.

\subsection{Dispersive systems on graph structure}
In recent years, the study of nonlinear dispersive equations on metric graphs has attracted growing interest from mathematicians, physicists, chemists, and engineers (see \cite{BK, BlaExn08, BurCas01, K, Mug15} for detailed discussions and references). A fundamental setting for such investigations is the star graph $\mathcal{G}$, a metric graph consisting of $N$ half-lines $(0,\infty)$ meeting at a single common vertex $\nu = 0$. On each edge, one typically defines a nonlinear dispersive equation—most prominently, the nonlinear Schrödinger equation (see the works of Adami \textit{et al.} \cite{AdaNoj14, AdaNoj15} and Angulo and Goloshchapova \cite{AngGol17a, AngGol17b}).
The introduction of nonlinearities into dispersive models on such networks creates a rich framework for exploring soliton propagation, energy transfer, and nonlinear wave interactions. A central analytical difficulty arises at the vertex, where the graph may exhibit bifurcation or multi-bifurcation phenomena, especially in more intricate network topologies.

Other nonlinear dispersive systems on graphs have also been the subject of extensive study. For example, in the context of well-posedness theory, Cavalcante \cite{Cav} established local well-posedness for the Cauchy problem associated with the Korteweg–de Vries (KdV) equation on a metric star graph composed of one negative and two positive half-lines joined at a common vertex $\nu = 0$ (the so-called $\mathcal{Y}$-junction).

Another important example is the Benjamin–Bona–Mahony (BBM) equation: Bona and Cascaval \cite{bona} proved local well-posedness in the Sobolev space $H^1$, while Mugnolo and Rault \cite{Mugnolobbm} showed the existence of traveling-wave solutions on graphs. Using a different approach, Ammari and Crépeau \cite{AmCr1} derived results concerning well-posedness and stabilization of the BBM equation in star-shaped networks with bounded edges. Moreover, Mugnolo \textit{et al.} \cite{MugNoSe} provided a full characterization of boundary conditions under which the Airy-type evolution equation
$$u_t = \alpha u_{xxx} + \beta u_x, \quad \alpha \in \mathbb{R} \setminus \{0\}, \ \beta \in \mathbb{R},$$
generates contraction semigroups on star graphs. We also refer to the recent work on Airy dynamics on graphs with looping edges \cite{PavaMunoz24}, which provides a complementary perspective on this setting.
\medskip

Significant progress has also been achieved in the fields of control theory and inverse problems on networks. Ignat \textit{et al.} \cite{Ignat2011} examined inverse problems for the heat and Schrödinger equations on tree-like structures, while Baudouin and Yamamoto \cite{Baudouin} proposed a unified and simplified framework for coefficient identification problems. Furthermore, Ammari and Crépeau \cite{Ammari and Crepeau 2018}, Cerpa \textit{et al.} \cite{Cerpa, Cerpa1}, and Parada \textit{et al.} \cite{Parada2022a, Parada2022b} established stabilization and boundary controllability results for the KdV equation on star-shaped graphs, contributing substantially to the control-theoretic understanding of dispersive models on networks. Recently, in \cite{CapistranoParadaDaSilva25}, we studied the analytical structure of the entire functions associated with the KdV operator, addressing control problems involving mixed Dirichlet–Neumann boundary controls.


\subsection{Setting of problem and functional framework} In the context of graph structure, let us consider the following: given $N\geq 2$, consider a star-shaped network $\mathcal{T}$ with $N$ edges described by intervals $I_j=(0,l_j)$, $l_j>0$ for $j=1,...,N$. Denoting by $e_1,...,e_N$ the edges of $\mathcal{T}$ one has $\mathcal{T}=\bigcup_{j=1}^Ne_j$. Here, we study the local  well-posedness for a system composed of $N$ nonlinear KdV equations posed on $\mathcal{T}$ and coupled by the boundary, namely
		\begin{align}\label{kdV}
			\begin{cases}
				\partial_t u_j+\partial_x u_j+\partial_x^3 u_j+u_j\partial_x u_j=0,&t \in (0,T),\ x \in (0,l_j),\ j=1,...,N\\
				u_j(t,0)=u_1(t,0),&t\in (0,T),\ j=1,...,N\\
				\displaystyle\sum_{j=1}^N\partial_x^2 u_j(t,0)=-\alpha u_1(t,0)-\frac{N}{3}u_1^2(t,0)+g_0(t),&t \in (0,T)\\				
				u_j(t,l_j)=p_j(t),\ \ \ \ \ \partial_x u_j(t,l_j)=g_j(t),&t\in (0,T),\ j=1,...,N\\
				u_j(0,x)=u_j^0(x),&x \in (0,l_j),\ j=1,...,N
			\end{cases}
		\end{align}
where $u=(u_1,...,u_N)$ stands the state of the system, $p_j$ and $g_j$ are the controls inputs and $\alpha>\frac{N}{2}$. We represent this situation in the figure \eqref{fig:network}. 
		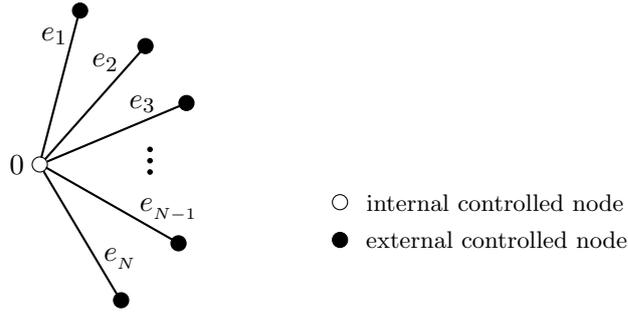
\begin{figure}[H]
			\centering
			\begin{tikzpicture}
				\draw(0, 0) circle (1mm);
				\draw[line width=0.8pt](0.02, 0.09) -- (0.5, 1.95);
				\draw[fill=black](0.53, 2.04) circle (1mm);
				\draw[line width=0.8pt](0.07, 0.07) -- (1.33, 1.5);
				\draw[fill=black](1.39, 1.57) circle (1mm);
				\draw[line width=0.8pt](0.1, 0.04) -- (1.85, 0.78);
				\draw[fill=black](1.93, 0.815) circle (1mm);
				\draw[line width=0.8pt](0.08, -0.04) -- (1.75, -1);
				\draw[fill=black](1.825, -1.045) circle (1mm);
				\draw[line width=0.8pt](0.05, -0.09) -- (1.02, -1.73);
				\draw[fill=black](1.07, -1.8) circle (1mm);
				\draw[fill=black](1.45, 0.2) circle (0.3mm);
				\draw[fill=black](1.45, 0.05) circle (0.3mm);
				\draw[fill=black](1.45, -0.1) circle (0.3mm);
				\draw node (no1) at (-0.3,0) {$0$};
				\draw node (no1) at (0.19, 1.7) {$e_1$};
				\draw node (no1) at (0.86, 1.35) {$e_2$};
				\draw node (no1) at (1.35, 0.8) {$e_3$};
				\draw node (no1) at (1.7, -0.6) {$e_{_{N-1}}$};
				\draw node (no1) at (1.06, -1.25) {$e_{_N}$};
				\draw(3.95,-0.5) circle (1mm);
				\draw node (no1) at (6.4,-0.5) {{\footnotesize \hspace{10mm} internal controlled node}\ \ \ \ \ \ \ \ \ \ \ \ \ \ \ \ };
				\draw[fill=black](3.95,-1) circle (1mm);
				\draw node (no1) at (6.4,-1) {{\footnotesize \hspace{13mm} external controlled node}\ \ \ \ \ \ \ \ \ \ \ \ \ \ \ \ \ \ };
			\end{tikzpicture}
			\label{fig:network}
			\caption{Network with $N$ edges}	
	\end{figure}


Let us introduce some notation used throughout this work. From now on, we have the following: 
	 \begin{itemize}
	 \item[i.] The vector $u$ is given by $u=(u_1,...,u_N) \in \mathbb{L}^2(\mathcal{T})=\displaystyle\prod_{j=1}^NL^2(0,l_j)$
	and initial data is $u^0=(u^0_1,...,u^0_N)$. 
	 \item[ii.] The inner product in $\mathbb{L}^2(\mathcal{T})$ will be given by  $$\left(u,z\right)_{\mathbb{L}^2(\mathcal{T})}=\sum_{j=1}^N\int_0^{l_j}u_jz_jdx,\ \ u,z\in \mathbb{L}^2(\mathcal{T}).$$
	Moreover,  $$\mathbb{H}^s(\mathcal{T})=\displaystyle\prod_{j=1}^NH^s(0,l_j), \ \ s \in \mathbb{R}.$$
	 \end{itemize}
	\begin{itemize}
		\item[iii.] Define also the following spaces:
		$$\mathbb{H}_0^k(\mathcal{T})=\displaystyle\prod_{j=1}^{N}H_0^k(0,l_j),\ k\in \mathbb{N}$$
		and
		$$H_r^k(0,l_j)=\left\{v \in H^k(0,l_j),\ \partial_x^iv(l_j)=0,\ 0\leq i\leq k-1\right\},\ k=1,...,N,$$ 
		where $v^{(m)}=\frac{dv}{dx^m}$ and the index $r$ is related to the null right boundary conditions. In addition, 
		$$\mathbb{H}^k_r(\mathcal{T})=\prod_{j=1}^{N}H_r^k(0,l_j)\quad \text{and} \quad \|u\|_{\mathbb{H}^k_r(\mathcal{T})}^2=\sum_{j=1}^{n}\|u_j\|_{H^k(0,l_j)}^2\ k \in \mathbb{N}.$$
		\item[iv.] Consider the following characterization: 
		$$H_r^{-1}(0,l_j)=\left(H_r^{1}(0,l_j)\right)'$$ 
		as the dual space of $H_r(0,l_j)$ with respect to the pivot space $L^2(0,l_j)$ and $\mathbb{H}_r^{-1}$ denotes the cartesian product of $H_r^{-1}(0,l_j)$. 
\item[v.] For $k\in \mathbb{N}$, the derivative $\partial_x^ku$ means the row vector
\begin{align*}
\partial_x^k u:=(\partial_x^ku_1,...,\partial_x^ku_N).
\end{align*}
The derivative $\partial_t^ku$ is understood in the same way.
\item[vi.] Under the above notation, we have
\begin{align*}
	C^k\left([0,T];\mathbb{H}^s(\mathcal{T})\right)=\prod_{j=1}^NC^k\left([0,T];H^s(0,l_j)\right),\ k=0,1,2,... \text{ \ and \ }s \in \mathbb{R}
\end{align*}
as well as
\begin{align*}
	L^p\left(0,T;\mathbb{H}^s(\mathcal{T})\right)=\prod_{j=1}^NL^p\left(0,T;H^s(0,l_j)\right),\ p\geq 1 \text{ \ and \ }s \in \mathbb{R}.
\end{align*}
Hence, for $f \in 	C^k\left([0,T];\mathbb{H}^s(\mathcal{T})\right)$ and $q \in L^p\left(0,T;\mathbb{H}^s(\mathcal{T})\right)$ we can write
\begin{align*}
f=(f_1,...,f_N)\text{ \ and \ }q=(q_1,...,q_N)
\end{align*}
with
\begin{align*}
f_j \in C^k\left([0,T];H^s(0,l_j)\right)\text{ \ and \ }q_j \in L^p\left(0,T;H^s(0,l_j)\right),\ j=1,...N.
\end{align*}
\item[vii.] We will consider the spaces
$$\mathbb{H}^k_e(\mathcal{T})=\left\{u=(u_1,...,u_N)\in \mathbb{H}^k_r(\mathcal{T});\ u_j(0)=u_k(0),\ j,k=1,...,N\right\}\ k \in \mathbb{N},$$
$$\mathbb{H}^3_c(\mathcal{T})=\left\{u=(u_1,...,u_N)\in \mathbb{H}^3(\mathcal{T});\ u_j(0)=u_k(0),\ j,k=1,...,N\right\},$$
and
$$\mathbb{B}_{s,T}=C([0,T],\mathbb{H}^s(\mathcal{T}))\cap L^2(0,T,\mathbb{H}^{s+1}(\mathcal{T})),$$
with the norm 
$$\displaystyle\|u\|_{\mathbb{B}_{s,T}}:=\|u\|_{C([0,T],\mathbb{H}^s(\mathcal{T}))}+\|u\|_{L^2(0,T,\mathbb{H}^{s+1}(\mathcal{T}))}=\max_{t\in [0,T]}\|u\|_{\mathbb{H}^s(\mathcal{T})}+\left(\int_0^T\|u(t,\cdot)\|_{\mathbb{H}^{s+1}}^2dt\right)^\frac{1}{2}.$$
When $s=0$ we write $\mathbb{B}_T$ instead $\mathbb{B}_{0,T}$.
\item [viii.] Once understand the appropriate spaces in each situation, we will use the notations $g=(g_1,...,g_N)$ and $p=(p_1,...,p_N)$.
\end{itemize}

\subsection{Main results and structure of the article} With the previous framework in hand, our goal here is to prove the local and global well-posedness for the initial boundary value problem (IBVP) \eqref{kdV}. The first main result establishes the local well-posedness for this system in sharp spaces. 

\begin{theorem}\label{main} Let $T>0$, $l_j>0,\ j=1,...,N$, $s\in [0,3]$ be given. Suppose $f\in W^{\frac{s}{3},1}(0,T;\mathbb{L}^2(\mathcal{T}))\cap L^{\frac{6}{6-s}}(0,T;\mathbb{H}^\frac{s}{3}(\mathcal{T}))$ and 
\begin{align*}
(u^0,g_0,g,p)\in \mathbb{H}^s(\mathcal{T})\times H^{\frac{s-1}{3}}(0,T)\times \left[H^\frac{s}{3}(0,T)\right]^{N}\times \left[H^{\frac{s+1}{3}}(0,T)\right]^N.
\end{align*}
satisfying the $s$-compatibility conditions, thus there exists $T^*\in (0,T)$ such that the IBVP \eqref{kdV} admits a unique solution $u \in \mathcal{X}_{s,T^*}$, where 
$$\mathcal{X}_{s,T}=\left\{v \in \mathbb{B}^{*}_{s,T};\ \partial_x^ku_j\in L_x^\infty(0,l_j;H^\frac{s+1-k}{3}(0,T)),\ k=0,1,2,\ j=1,2,...,N\right\},$$
\begin{align*}
	\mathbb{B}_{s,T}^*:=\mathbb{B}_{s,T}\cap H^\frac{s}{3}(0,T;\mathbb{H}^1(\mathcal{T})).
\end{align*}
\end{theorem}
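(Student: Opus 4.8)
The plan is to follow the strategy of Bona, Sun, and Zhang \cite{BSZ 2003}: first solve the \emph{linear} IBVP associated with \eqref{kdV} — the evolution equation $\partial_t u_j+\partial_x u_j+\partial_x^3 u_j=f_j$ together with the boundary relations of \eqref{kdV} but with the quadratic vertex term $\tfrac N3 u_1^2(t,0)$ removed — and produce a solution operator that is bounded on $\mathcal{X}_{s,T}$ and depends linearly and continuously on $(u^0,g_0,g,p,f)$ in the spaces of the statement, and then recover the solution of \eqref{kdV} by a contraction argument. The linear theory is built in the usual way by writing the solution as the superposition of (i) a Duhamel/semigroup part carrying $u^0$ and $f$, controlled through the Kato smoothing effect and the hidden trace regularity $\partial_x^k u_j\in L_x^\infty(0,l_j;H_t^{(s+1-k)/3})$, and (ii) a boundary-forced part represented explicitly via the Laplace transform in $t$. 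The one genuinely new point on the graph is that, after Laplace transform, the relations $u_j(0)=u_1(0)$ and $\sum_j\partial_x^2 u_j(0)=-\alpha u_1(0)+\widehat{g_0}$ couple all $N$ edges into a single linear algebraic system, whose solvability — uniformly along the relevant contour — is exactly what the hypothesis $\alpha>\tfrac N2$ secures. I will take these linear estimates as given below.

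Now fix $(u^0,g_0,g,p,f)$ as in the statement, satisfying the $s$-compatibility conditions, and let $r>0$ be a radius to be chosen in terms of the data norms. On the closed ball $B_r=\{v\in\mathcal{X}_{s,T^*}:\ \|v\|_{\mathcal{X}_{s,T^*}}\le r\}$ define $\Gamma(v)$ to be the solution of the linear IBVP with interior forcing $\widetilde f_j:=f_j-v_j\partial_x v_j$, vertex datum $\widetilde g_0:=g_0-\tfrac N3\,v_1^2(\cdot,0)$, and unchanged data $u^0,g,p$. The key structural observation is that the $s$-compatibility conditions for \eqref{kdV} are designed precisely so that, since $v|_{t=0}=u^0$, the traces at $t=0$ of the nonlinear terms equal the corresponding polynomial expressions in $u^0$; hence the modified data $(u^0,\widetilde g_0,g,p,\widetilde f)$ satisfies the \emph{linear} $s$-compatibility conditions and $\Gamma:B_r\to\mathcal{X}_{s,T^*}$ is well defined, with a fixed point of $\Gamma$ being exactly a solution of \eqref{kdV}.

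The two estimates that drive the argument are the interior bilinear estimate
\[
\big\|v\partial_x v\big\|_{W^{s/3,1}(0,T^*;\mathbb{L}^2(\mathcal{T}))\,\cap\,L^{6/(6-s)}(0,T^*;\mathbb{H}^{s/3}(\mathcal{T}))}\ \le\ C\,(T^*)^{\theta}\,\|v\|_{\mathbb{B}^{*}_{s,T^*}}^2 ,
\]
proved edgewise as in \cite{BSZ 2003} (the graph contributes only a finite sum over $j$, the nonlinearity being local on each edge), and the boundary bilinear estimate
\[
\big\|v_1^2(\cdot,0)\big\|_{H^{(s-1)/3}(0,T^*)}\ \le\ C\,(T^*)^{\theta}\,\big\|v_1(\cdot,0)\big\|_{H^{(s+1)/3}(0,T^*)}^2\ \le\ C\,(T^*)^{\theta}\,\|v\|_{\mathcal{X}_{s,T^*}}^2 ,
\]
which follows from the trace $v_1(\cdot,0)\in H^{(s+1)/3}(0,T^*)$ built into the definition of $\mathcal{X}_{s,T^*}$ together with a fractional product estimate (an algebra property when $(s+1)/3>1/2$, and a H\"older--Sobolev product into a negative-index space when $s$ is small). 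Combined with the linear bound these give $\|\Gamma(v)\|_{\mathcal{X}_{s,T^*}}\le C_0(\text{data})+C_1(T^*)^{\theta}r^2$, and the parallel Lipschitz estimates — using $v\partial_x v-w\partial_x w=\tfrac12\partial_x\big((v-w)(v+w)\big)$ and $v_1^2(\cdot,0)-w_1^2(\cdot,0)=(v_1-w_1)(v_1+w_1)(\cdot,0)$ — give $\|\Gamma(v)-\Gamma(w)\|_{\mathcal{X}_{s,T^*}}\le C_1(T^*)^{\theta}\,r\,\|v-w\|_{\mathcal{X}_{s,T^*}}$. Taking $r=2C_0(\text{data})$ and then $T^*\in(0,T)$ small enough that $C_1(T^*)^{\theta}r\le\tfrac12$, $\Gamma$ is a contraction of $B_r$ into itself, so the Banach fixed point theorem yields a unique $u\in B_r\subset\mathcal{X}_{s,T^*}$; uniqueness in all of $\mathcal{X}_{s,T^*}$ follows by the standard bootstrap: any two solutions agree on a short interval by the contraction, and that interval is propagated to $[0,T^*]$ because the bilinear estimates are local in time.

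The main obstacle is the linear theory on the graph, not the nonlinear closure: one must extract the sharp Kato smoothing and trace estimates from the Laplace-transform representation when the vertex condition couples all $N$ edges, which requires showing that the matrix coming from $u_j(0)=u_1(0)$ and $\sum_j\partial_x^2 u_j(0)=-\alpha u_1(0)+\widehat{g_0}$ is invertible with controlled inverse along the integration contour (this is exactly where $\alpha>\tfrac N2$ and the precise form of the $s$-compatibility conditions are used), and that the whole range $s\in[0,3]$ is handled uniformly, including the thresholds at which compatibility conditions become active. Once that is in place, the perturbation step above is routine if slightly delicate, the only feature absent from the single-equation analysis being the quadratic vertex term $\tfrac N3 u_1^2(t,0)$, which is absorbed into the boundary datum as described.
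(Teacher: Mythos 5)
Your nonlinear closure is essentially the paper's Section \ref{sec4}: you absorb $u_j\partial_x u_j$ into the forcing and $\tfrac{N}{3}u_1^2(\cdot,0)$ into the vertex datum, and run a contraction in $\mathcal{X}_{s,T^*}$ using exactly the two bilinear estimates the paper uses (the interior estimate of Lemma \ref{lemma 2.4} and the boundary product estimate of Lemma \ref{lemma 2.1}); that part is sound, modulo the small point that your compatibility argument invokes $v|_{t=0}=u^0$ for every $v$ in the ball, which holds only if the ball is defined with that constraint (or, as in the paper, if one simply checks that the modified data $(u^0,\widetilde g_0,g,p,\widetilde f)$ satisfy the \emph{linear} $s$-compatibility conditions, which for $s\le 3$ is a finite case check).

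The genuine gap is the linear theory, which you explicitly ``take as given'' and which is precisely the substantial content of Theorem \ref{main}: the existence of a bounded solution map for \eqref{LkdV} (and its variable-coefficient version \eqref{LkdV-a}) landing in $\mathcal{X}_{s,T}$, i.e.\ with the sharp hidden trace regularity $\partial_x^k u_j\in L_x^\infty(0,l_j;H^{\frac{s+1-k}{3}}(0,T))$, uniformly for $s\in[0,3]$. Your sketched route (Laplace transform on the graph, with the claim that $\alpha>\tfrac N2$ yields uniform invertibility of the $N\times N$ vertex coupling matrix along the integration contour) is nowhere substantiated, and it is not what the paper does nor what the hypothesis $\alpha>\tfrac N2$ is used for; the boundary integral/oscillatory analysis on a coupled star graph is exactly the hard step you would have to carry out. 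The paper avoids it entirely: for $s=0$ it combines the semigroup result of \cite{Ammari and Crepeau 2018} with a lifting of the boundary data, and then decouples the vertex by setting $\theta=Nu_1-\sum_j u_j$ (a single-edge Dirichlet problem covered by Proposition \ref{prop 2.3}, i.e.\ \cite{BSZ 2003}) and $\psi=\sum_j u_j$ (a single-edge problem with the condition $\partial_x^2\psi(t,0)+\tfrac{\alpha}{N}\psi(t,0)=g_0$, handled via Proposition \ref{prop 2.4} of \cite{Caicedo and Zhang 2017} plus a fixed point), recovering $u_1=\tfrac1N(\theta+\psi)$ and then each $u_j$ from the single-edge theory (Propositions \ref{W.P._s=0}, \ref{W.P.LKdV-a}); the case $s=3$ is obtained by differentiating in time (Proposition \ref{W.P._s=3}) and $s\in(0,3)$ by interpolation (Corollary \ref{W.P._s in [0,3]}). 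Without either carrying out your contour analysis or supplying an argument of this kind, the proposal does not prove the theorem, since all the quantitative input of the fixed-point step rests on these unproven linear estimates.
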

The proof of the previous theorem relies on several analytical tools combined with a fixed-point argument. In addition, a new notion of $s$-compatibility adapted to the graph structure is introduced (see Section \ref{sec2}), inspired by the single-edge KdV framework in \cite{BSZ 2003}. This theorem provides a sharp local well-posedness result, along with regularity properties, for the associated functions. Furthermore, we establish the existence of global solutions at the $H^s$-norm, which enables applications to control problems on star graphs and resolves several open questions that will be addressed in a forthcoming paper. The second main result of this work can be stated as follows.

\begin{theorem}\label{global_(0,3)}
Let $T>0$ and $\lambda\geq 1$ be. Consider $(u^0,g_0,g,p)\in \mathbb{X}_{s,T}^\lambda$, where $\mathbb{X}_{s,T}^\lambda$ is the set of $s$-compatible data
$$u^0 \in \mathbb{H}^{s}(\mathcal{T}), g_0 \in H^{\frac{s+\lambda-1}{3}}(0,T),\ g \in [H^\frac{s}{3}(0,T)]^N\quad \text{and}\quad p\in [H^\frac{s+\lambda+1}{3}(0,T)]^N.$$
Thus, the problem \eqref{kdV} has a unique solution $u\in \mathcal{X}_{s,T}$ which satisfies
\begin{align*}
	\|u\|_{\mathcal{X}_{s,T}}&\leq C\left(\|(u^0,g_0,g,p)\|_{0,\lambda}\right)\left(
	\begin{array}{c}
		\|u^0\|_{\mathbb{H}^s(\mathcal{T})}+\|g_0\|_{H^\frac{s+\lambda-1}{3}(0,T)}+\|g\|_{[H^\frac{s}{3}(0,T)]^N}+\|p\|_{[H^\frac{s+\lambda+1}{3}(0,T)]^N}
	\end{array}
	\right)
\end{align*}
where $C>0$ is a positive constant. 
\end{theorem}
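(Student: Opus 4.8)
The plan is to promote the local theory of Theorem~\ref{main} to a global statement by means of an a priori estimate and a continuation argument, treating first the base regularity $s=0$ and then lifting to $s\in(0,3]$.

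The heart of the matter is an a priori $\mathbb{L}^2$-bound. Let $u\in\mathcal{X}_{0,\tau}$ be the solution provided by Theorem~\ref{main} on the maximal interval $[0,\tau)\subset[0,T]$; by density of the smooth $s$-compatible data it suffices to argue with smooth data. Multiplying the $j$-th equation of \eqref{kdV} by $u_j$, integrating over $(0,l_j)$, summing in $j$ and integrating by parts, the coupling conditions $u_j(t,0)=u_1(t,0)$ and $\sum_j\partial_x^2 u_j(t,0)=-\alpha u_1(t,0)-\tfrac{N}{3}u_1^2(t,0)+g_0(t)$ are precisely tailored so that the cubic traces at the central vertex cancel, yielding
\begin{align*}
\frac{1}{2}\frac{d}{dt}\|u\|_{\mathbb{L}^2(\mathcal{T})}^2
&=\Bigl(\tfrac{N}{2}-\alpha\Bigr)u_1(t,0)^2+u_1(t,0)g_0(t)-\frac{1}{2}\sum_{j=1}^N\bigl(\partial_x u_j(t,0)\bigr)^2\\
&\quad+\frac{1}{2}\sum_{j=1}^N\Bigl(g_j(t)^2-p_j(t)^2\Bigr)-\frac{1}{3}\sum_{j=1}^N p_j(t)^3-\sum_{j=1}^N p_j(t)\,\partial_x^2 u_j(t,l_j).
\end{align*}
Since $\alpha>\tfrac{N}{2}$, the first term is dissipative and, via Young's inequality, absorbs $u_1(t,0)g_0(t)$ up to a $g_0(t)^2$ contribution, and $-\tfrac{1}{2}\sum_j(\partial_x u_j(t,0))^2$ is dissipative as well; all the remaining terms but the last one only involve the data $g,p$. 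The last term $\int_0^\tau\sum_j p_j\,\partial_x^2 u_j(t,l_j)\,dt$ is estimated by the duality pairing $H^{(\lambda+1)/3}(0,\tau)\times H^{-1/3}(0,\tau)$ — admissible exactly because $\lambda\ge1$ — together with the sharp trace bound $\|\partial_x^2 u_j(\cdot,l_j)\|_{H^{-1/3}(0,\tau)}\le\|u\|_{\mathcal{X}_{0,\tau}}$ encoded in $\mathcal{X}_{0,\tau}$; combined with the linear estimates of the previous sections, a continuation argument then closes the inequality and produces a bound $\|u\|_{\mathbb{B}_{0,\tau}}\le C\bigl(\|(u^0,g_0,g,p)\|_{0,\lambda}\bigr)$ that does not degenerate as $\tau\uparrow T$.

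Since the local existence time $T^\ast$ in Theorem~\ref{main} depends only on $T$ and on the size of the data, this uniform bound — together with the fact that $0$-compatibility persists when the problem is restarted from $u(\cdot,t_k)\in\mathbb{L}^2(\mathcal{T})$ — allows one to cover $[0,T]$ by finitely many subintervals and patch the local pieces into a global $u\in\mathcal{X}_{0,T}$, with uniqueness inherited subinterval by subinterval. To reach $s\in(0,3]$ one views $u_j\partial_x u_j=\tfrac{1}{2}\partial_x(u_j^2)$ as a forcing term and applies the linear well-posedness and smoothing estimates with the $\lambda$-regular data of $\mathbb{X}_{s,T}^\lambda$: the extra $\lambda/3$ time derivatives carried by $g_0$ and $p$ are exactly what compensates the regularity lost through the quadratic nonlinearity and through the low-regularity traces at $x=l_j$, so the global $\mathbb{L}^2$-solution lies in $\mathcal{X}_{1,T}$; complementing this with the boundary-modified $H^1$ energy identity (again controlled thanks to $\alpha>\tfrac{N}{2}$) to keep the $\mathbb{H}^1$-norm from blowing up, iterating the bootstrap to $s=2,3$, and interpolating in between, one obtains in each case the quantitative estimate on $\|u\|_{\mathcal{X}_{s,T}}$ asserted in the statement.

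The main obstacle is the a priori estimate itself — specifically the cross term $\int_0^\tau\sum_j p_j\,\partial_x^2 u_j(t,l_j)\,dt$, which couples the prescribed external Dirichlet data with the uncontrolled curvature traces $\partial_x^2 u_j(t,l_j)$; these carry only the time-regularity $H^{(s-1)/3}$, which is negative when $s<1$. It is exactly here that the hypothesis $\lambda\ge1$ is indispensable, and where one must balance the favorable vertex dissipation $(\tfrac{N}{2}-\alpha)u_1(t,0)^2$ and $-\tfrac{1}{2}\sum_j(\partial_x u_j(t,0))^2$ against this term while keeping careful track of how the $\mathcal{X}_{s,\tau}$-norms behave along the growing time intervals used in the continuation.
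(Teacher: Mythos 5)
Your energy identity at the $s=0$ level is correct (the cubic vertex terms do cancel thanks to the $-\tfrac{N}{3}u_1^2(t,0)$ term in the junction condition), but the way you close it hides the main difficulty and, as written, is circular. The troublesome term $\int_0^\tau\sum_j p_j\,\partial_x^2u_j(t,l_j)\,dt$ is estimated by you through $\|\partial_x^2u_j(\cdot,l_j)\|_{H^{-1/3}(0,\tau)}\le\|u\|_{\mathcal{X}_{0,\tau}}$; but the only available control of that trace norm comes from the linear (hidden‑regularity) estimates applied with forcing $-u_j\partial_xu_j$, whose $L^1_tL^2_x$ norm is \emph{quadratic} in $\|u\|_{\mathbb{B}_{0,\tau}}$ (Lemma \ref{lemma 2.4}). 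The resulting inequality has the unknown appearing quadratically on the right with a coefficient $\sim\|p\|$ that is not small, so it cannot be closed by a continuation argument: you need a global bound on $\|u\|_{\mathcal{X}_{0,\tau}}$ to run the energy estimate, and the energy estimate to get the global bound. This is precisely why the paper (following Ammari--Cr\'epeau) only proves the $L^2$ a priori estimate for \emph{homogeneous} Dirichlet data at the external nodes, and treats general $p$ by splitting $u=v+w$: the linear part $v$ carries $(g_0,p)$ and is bounded in $\mathbb{B}_{\lambda,T}$ by Proposition \ref{W.P.LKdV-a}, while $w$ solves a KdV problem with drift $1+v_j$, forcing $-v_j\partial_xv_j$, modified vertex condition and $w_j(t,l_j)=0$, to which the energy method applies (Propositions \ref{prop 3.1}--\ref{prop 3.2}). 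Note also that your explanation of $\lambda\ge1$ is off: the duality $H^{(\lambda+1)/3}\times H^{-1/3}$ would already be available for $\lambda\ge0$; in the paper $\lambda\ge1$ is what puts $g_0$ in $L^2(0,T)$ and gives $v_1(\cdot,0)\in H^{(\lambda+1)/3}\hookrightarrow C([0,T])$, which is how the cross terms $(w_1+v_1)^2(t,0)$ are absorbed.

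The lifting to $s\in(0,3]$ also has gaps. Treating $u_j\partial_xu_j$ as a forcing term to conclude $u\in\mathcal{X}_{1,T}$ is circular: by Lemma \ref{lemma 2.4}, to place $u\partial_xu$ in $W^{s/3,1}(0,T;\mathbb{L}^2)\cap L^{6/(6-s)}(0,T;\mathbb{H}^{s/3})$ you already need $u\in\mathbb{B}^{*}_{s,T}$, and an ``$H^1$ energy identity'' with nonzero $p_j$ runs into the same uncontrolled external traces as above. Moreover, ``interpolating in between'' integer regularities is not legitimate for a nonlinear solution map without a nonlinear interpolation theorem. The paper instead goes directly from $s=0$ to $s=3$ by setting $v=\partial_tu$ (which solves a linearized system with drift $1+u_j$, Lemma \ref{lemma 5.1}) and recovering $\mathbb{H}^3$/$\mathbb{H}^4$ spatial regularity from the equation, and then fills in $s\in(0,3)$ via the Bona--Scott nonlinear interpolation theorem (Theorem \ref{interpolacao}), which requires exactly two ingredients your sketch never provides: a Lipschitz difference estimate for the data-to-solution map at the $s=0$ endpoint, and the $s=3$ bound with a constant depending only on the low norm $\|(u^0,g_0,g,p)\|_{0,\lambda}$.
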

Together, these two theorems provide, for the first time, a comprehensive framework for the well-posedness of general graph structures with boundary controls. Moreover, they extend the classical result for a single KdV equation \cite{BSZ 2003} to star-shaped graphs composed of $N$ equations.
\vspace{0.2cm}

The paper is organized as follows. In Section \ref{sec2}, we introduce a general notion of $s$-compatibility for star graphs. Building on this framework, Section \ref{sec3} addresses the well-posedness of the linear system associated with \eqref{kdV}. After that, Section \ref{sec4} is devoted to proving the local well-posedness result for the system \eqref{kdV}, giving the proof of Theorem \ref{main}. The second main result, concerning global well-posedness, in the $H^s$-regime, for the system \eqref{kdV} is established in Section \ref{sec6}. 

\section{\texorpdfstring{$s$-compatibility conditions}{s-compatibility conditions}\label{sec2}}
 When investigating the existence of solutions to \eqref{kdV}, particular attention must be paid to the functional spaces chosen for $u^0$, $p_j$, $g_0$, and $g_j$. These spaces may allow for the existence of traces for these functions, which must be consistent with the initial and boundary conditions of \eqref{kdV}, as we now make precise. 

Suppose that $u\in \prod_{j=1}^NC^\infty\left([0,T]\times [0,l_j]\right)$ is a solution of \eqref{kdV}. Define,
\begin{align*}
\phi_{0,j}(x)=u_j^0(x),\ \ j=1,...,N
\end{align*}
and observe that, according to the boundary conditions in \eqref{kdV}, we have
\begin{align*}
	\begin{cases}
		\phi_{0,j}(0)=\phi_{0,1}(0),\\
		\displaystyle\sum_{j=1}^N\phi_{0,j}''(0)=-\alpha\phi_{0,1}(0)-\frac{N}{3}\phi_{0,1}^2(0)+g_0(0),\\
		\phi_{0,j}(l_j)=p_j(0),\\
		\phi_{0,j}'(l_j)=g_j(0).
	\end{cases}
\end{align*}
Now, define
\begin{align*}
\phi_{1,j}(x)=-(\phi_{0,j}'+\phi_{0,j}'''+\phi_{0,j}\phi_{0,j}')(x),\quad j=1,...,N,
\end{align*}
and note that
\begin{align*}
	\begin{cases}
		\phi_{1,j}(0)=\phi_{1,1}(0),\\
		\displaystyle\sum_{j=1}^N\phi_{1,j}''(0)=-\alpha\phi_{1,1}(0)-\frac{N}{3}\left(\phi_{0,1}(0)\phi_{1,1}(0)+\phi_{1,1}(0)\phi_{0,1}(0)\right)+g_0^{(1)}(0),\\
		\phi_{1,j}(l_j)=p_j^{(1)}(0),\\
		\phi_{1,j}'(l_j)=g_j^{(1)}(0).
	\end{cases}
\end{align*}
Analogously, defining
\begin{align*}
\phi_{2,j}(x)=-(\phi_{1,j}'+\phi_{1,j}'''+\phi_{1,j}\phi_{0,j}'+\phi_{0,j}\phi_{1,j}')(x),\quad j=1,...,N
\end{align*}
we get
\begin{align*}
	\begin{cases}
		\phi_{2,j}(0)=\phi_{2,1}(0),\\
		\displaystyle\sum_{j=1}^N\phi_{2,j}''(0)=-\alpha\phi_{2,1}(0)-\frac{N}{3}\left(\phi_{0,1}(0)\phi_{2,1}(0)+\phi_{1,1}(0)\phi_{1,1}(0)+\phi_{2,1}(0)\phi_{0,1}(0)\right)+g_0^{(2)}(0),\\
		\phi_{2,j}(l_j)=p_j^{(2)}(0),\\
		\phi_{2,j}'(l_j)=g_j^{(2)}(0).
	\end{cases}
\end{align*}
In general, we define for $j=1,...,N$,
\begin{align*}
\phi_{k,j}=	
\begin{cases}
u_j^0,& k=0,\\
-\left(\phi_{k-1,j}'''+\phi_{k-1,j}'+\dfrac{1}{2}\displaystyle\sum_{i=0}^{k-1}\frac{(k-1)!}{i!(k-1-i)!}\left(\phi_{i,j}\phi_{k-1-i,j}\right)'\right),& k=1,2,...
\end{cases}
\end{align*}
and, by induction, one can see that, for any $k \in \mathbb{N}\cup\{0\}$,
\begin{align}\label{eq 1.3}
	\begin{cases}
		\phi_{k,j}(0)=\phi_{k,1}(0),& j=1,...N,\\
		\displaystyle\sum_{j=1}^N\phi_{k,j}''(0)=-\alpha\phi_{k,1}(0)-\frac{N}{3}\sum_{l=0}^k\binom{k}{l}\phi_{l,1}(0)\phi_{k-l,1}(0)+g_0^{(k)}(0),&\\
		\phi_{k,j}(l_j)=p_j^{(k)}(0),& j=1,...N,\\
		\phi_{k,j}'(l_j)=g_j^{(k)}(0),& j=1,...N.
	\end{cases}
\end{align}
Here, we study the existence of solutions for \eqref{kdV} with data  
\begin{align*}
(u^0, g_0, g, p) \in \mathbb{H}^s(\mathcal{T}) 
\times H^{\frac{s-1}{3}}(0,T) 
\times \left[ H^{\frac{s}{3}}(0,T) \right]^{N} 
\times \left[ H^{\frac{s+1}{3}}(0,T) \right]^{N}.
\end{align*}
Motivated by the above, we must also require that these data satisfy the equalities in \eqref{eq 1.3} whenever the functions and their traces are well-defined.   This notion of traces follows from the Sobolev compact embedding: for \( a < b \), \( s \geq 0 \), \( k \in \mathbb{N} \cup \{0\} \), and \( \sigma \in (0,1) \) satisfying  
\[
k + \sigma < s - \frac{1}{2},
\]
we have
\begin{align*}
H^s(a,b) \hookrightarrow C^{k,\sigma}([a,b]).
\end{align*}
With this in hand, we determine which equations in \eqref{eq 1.3} should be considered in the compatibility analysis, depending on the value of \( s \).  
Observe that for a given \( s \geq 0 \), we can only define \( \phi_{k,j} \) for \( k = 0, 1, \dots, \left[ \frac{s}{3} \right] \), since starting from \( \phi_{0,j} \), each ``new'' \( \phi_{k,j} \) is obtained by differentiating the previous one three times. Moreover,
\begin{align*}
\phi_{k,j} \in H^{s - 3k}(0, l_j), \quad k = 0, 1, 2, \dots
\end{align*}

We begin by considering \( s \in [0,3] \).  In this case, \( s - 3k \leq 0 < \frac{1}{2} \) for every \( k \geq 1 \), so \( H^{s - 3k}(0, l_j) \) is not embedded in any space of continuous functions.   Consequently, there are no traces for \( \phi_{k,j} \) (and their derivatives) when \( k \geq 1 \).   Similarly, there are no traces for \( p_j^{(k)} \), \( g_0^{(k)} \), and \( g_j^{(k)} \) when \( k \geq 1 \).  It thus remains to study the traces of \( \phi_{0,j} \), \( \phi_{0,j}' \), \( \phi_{0,j}'' \), \( p_j \), \( g_0 \), and \( g_j \).

\subsection{Traces for $\phi_{0,j},\ \phi_{0,j}'$ and $\phi_{0,j}'',\ j=1,...,N$, $s\in [0,3]$} We now analyze the different cases according to the value of \( s \).  By the same reasoning used for \( k \geq 1 \), there are no traces of  \( \phi_{0,j} \), \( \phi_{0,j}' \), and \( \phi_{0,j}'' \) when \( 0 \leq s \leq \tfrac{1}{2} \). Hence, it is sufficient to consider the interval \( \tfrac{1}{2} < s \leq 3 \).
\begin{itemize}
	\item[i.] \textit{Case \( \tfrac{1}{2} < s \leq \tfrac{3}{2} \).}  
	In this range, \( 0 < s - \tfrac{1}{2} \), hence
	\begin{align*}
		H^s(0, l_j) \hookrightarrow C([0, l_j]).
	\end{align*}
Consequently, traces of \( \phi_{0,j} \) exist.  On the other hand, since \( s \leq \tfrac{3}{2} \) implies \( s - \tfrac{1}{2} \leq 1 \),  the embedding \( H^s(0, l_j) \hookrightarrow C^1([0, l_j]) \) does not hold.  Therefore, traces of \( \phi_{0,j}' \) and \( \phi_{0,j}'' \) are not defined.
	\item[ii.] \textit{Case \( \tfrac{3}{2} < s \leq \tfrac{5}{2} \).}  
	Here, \( 1 < s - \tfrac{1}{2} \), which yields
	\begin{align*}
		H^s(0, l_j) \hookrightarrow C^1([0, l_j]),
	\end{align*}
	and hence traces of \( \phi_{0,j} \) and \( \phi_{0,j}' \) exist.   However, since \( s \leq \tfrac{5}{2} \) implies \( s - \tfrac{1}{2} \leq 2 \), the embedding \( H^s(0, l_j) \hookrightarrow C^2([0, l_j]) \) is not valid, and thus traces of \( \phi_{0,j}'' \) do not exist.
	\item[iii.] \textit{Case \( \tfrac{5}{2} < s \leq 3 \).}  
	In this case, since \( \tfrac{5}{2} < s \) implies \( 2 < s - \tfrac{1}{2} \), we have
	\begin{align*}
		H^s(0, l_j) \hookrightarrow C^2([0, l_j]).
	\end{align*}
	Therefore, traces of \( \phi_{0,j} \), \( \phi_{0,j}' \), and \( \phi_{0,j}'' \) are well-defined.
\end{itemize}
\subsection{Traces for $p_j,\ g_0$ and $g_j$, $j=1,...,N$, $s \in [0,3]$} For \( s \leq \tfrac{1}{2} \), the spaces \( H^{\frac{s-1}{3}}(0,T) \), \( H^{\frac{s}{3}}(0,T) \), and \( H^{\frac{s+1}{3}}(0,T) \) are not embedded into spaces of continuous functions. Hence, there are no traces for \( p_j \), \( g_0 \), or \( g_j \). For \( \tfrac{1}{2} < s \leq 3 \), we proceed by analyzing the following cases:
\begin{itemize}
	\item[i.] \textit{Case \( \tfrac{1}{2} < s \leq \tfrac{3}{2} \).}  
	Since \( \tfrac{1}{2} < s \) implies \( 0 < \tfrac{s+1}{3} - \tfrac{1}{2} \), and \( s \leq \tfrac{3}{2} \) gives \( \tfrac{s}{3} - \tfrac{1}{2} \leq 0 \), we obtain the embedding
	\begin{align*}
		H^{\frac{s+1}{3}}(0,T) \hookrightarrow C([0,T]),
	\end{align*}
while neither \( H^{\frac{s}{3}}(0,T) \hookrightarrow C([0,T]) \) nor \( H^{\frac{s-1}{3}}(0,T) \hookrightarrow C([0,T]) \) holds. Therefore, traces exist for \( p_j \), but not for \( g_0 \) or \( g_j \).
	\item[ii.] \textit{Case \( \tfrac{3}{2} < s \leq \tfrac{5}{2} \).}  
In this range, we have the embeddings
	\begin{align*}
		H^{\frac{s+1}{3}}(0,T) \hookrightarrow C([0,T]) 
		\quad \text{and} \quad 
		H^{\frac{s}{3}}(0,T) \hookrightarrow C([0,T]),
	\end{align*}
but not \( H^{\frac{s-1}{3}}(0,T) \hookrightarrow C([0,T]) \). Hence, traces exist for \( p_j \) and \( g_j \), but not for \( g_0 \).
	\item[iii.] \textit{Case \( \tfrac{5}{2} < s \leq 3 \).}  
In this case, all the embeddings hold:
	\begin{align*}
		H^{\frac{s-1}{3}}(0,T), \quad H^{\frac{s}{3}}(0,T), \quad H^{\frac{s+1}{3}}(0,T)
		\hookrightarrow C([0,T]).
	\end{align*}
Therefore, traces exist for \( g_0 \), \( g_j \), and \( p_j \).
\end{itemize}
In summary, we have the following relationships between values of $s\in [0,3]$ and the existence of traces:
\begin{center}
\begin{tabular}{|c|c|}
\hline
There exist traces of&When\\[0.3mm]
\hline
$\phi_{0,j},\ p_j$&$1/2<s\leq 3$\\[0.3mm]
\hline
$\phi_{0,j}',\ g_j$&$3/2<s\leq 3$\\[0.3mm]
\hline
$\phi_{0,j}'',\ g_0$&$5/2<s\leq 3$\\[0.3mm]
\hline
\end{tabular}
\end{center}
Based on this, we give the following definition.
\begin{definition}[$s$-compatibility for $0\leq s\leq 3$]
Given $T>0$ and $s \in [0,3]$ we say that
$$(u^0,g_0,g,p)\in \mathbb{H}^s(\mathcal{T})\times H^{\frac{s-1}{3}}(0,T) \times\left[H^\frac{s}{3}(0,T)\right]^{N}\times \left[H^\frac{s+1}{3}(0,T)\right]^N$$
is $s$-compatible when the following conditions are satisfied for $j=1,...,N$:
\begin{align*}
	\phi_{0,j}(0)=\phi_{0,1}(0)\text{ \ \ and \ \ }\phi_{0,j}(l_j)=p_j(0)&\text{ \ \ if \ \ }\frac{1}{2}<s\leq \frac{3}{2},\\
	\phi_{0,j}(0)=\phi_{0,1}(0),\ \ \phi_{0,j}(l_j)=p_j(0)\text{ \ \ and \ \ }\phi_{0,j}'(l_j)=g_j(0) \ &\text{ \ \ if \ \ }\frac{3}{2}<s\leq \frac{5}{2}
\end{align*}
or
\begin{align*}
	\hspace{4 cm}\left\{
	\begin{array}{ll}
		\phi_{0,j}(0)=\phi_{0,1}(0),\\
		\displaystyle\sum_{j=1}^N\phi_{0,j}''(0)=-\alpha\phi_{0,1}(0)-\frac{N}{3}\phi_{0,1}^2(0)+g_0(0),\\
		\phi_{0,j}(l_j)=p_j(0),\\
		\phi_{0,j}'(l_j)=g_j(0).
	\end{array}
	\right.
	&\text{ \ \ if \ \ }\frac{5}{2}<s\leq 3.
\end{align*}
\end{definition}

\subsection{Traces for $\phi_{k,j},\ \phi_{k,j}'$ and $\phi_{k,j}'',\ j=1,...,N$, $s>3$}
Now we will consider $s>3$. Note that this implies $\left[\frac{s}{3}\right]-1\geq 0$. Since $s-3\left[\frac{s}{3}\right]\geq 0$ we have
\begin{align*}
s-3\left(\left[\frac{s}{3}\right]-1\right)=s-3\left[\frac{s}{3}\right]+3\geq 3>\frac{5}{2}=2+\frac{1}{2}
\end{align*}
and therefore $s-3k>2+\frac{1}{2}$ for $k=0,1,...,\left[\frac{s}{3}\right]-1$, that is,
\begin{align*}
2<(s-3k)-\frac{1}{2},\ k=0,1,...,\left[\frac{s}{3}\right]-1.
\end{align*}
Consequently, $H^{s-3k}(0,l_j)\hookrightarrow C^2([0,l_j])$ so there exist traces of $\phi_{k,j},\ \phi_{k,j}'$ and $\phi_{k,j}''$ for $k=0,1,...,\left[\frac{s}{3}\right]-1$. We will study the existence of traces of $\phi_{\left[\frac{s}{3}\right],j},\ \phi_{\left[\frac{s}{3}\right],j}'$ and $\phi_{\left[\frac{s}{3}\right],j}''$ by dividing it into cases.

\subsection{Traces for $p_j^{(k)},\ g_0^{(k)}$ and $g_j^{(k)},\ j=1,...,N$, $s>3$} Observe that
\begin{align*}
\frac{s-1}{3}-\frac{1}{2}=\frac{s}{3}-\frac{5}{6}>\frac{s}{3}-1\geq \left[\frac{s}{3}\right]-1\geq k,\quad k=1,...,\left[\frac{s}{3}\right]-1
\end{align*}
and consequently
\begin{align*}
H^{\frac{s-1}{3}}(0,T)\hookrightarrow C^k([0,T]),\ k=0,1,...,\left[\frac{s}{3}\right]-1.
\end{align*}
Similarly we have
\begin{align*}
	H^{\frac{s}{3}}(0,T),H^{\frac{s+1}{3}}(0,T)\hookrightarrow C^k([0,T]),\ k=0,1,...,\left[\frac{s}{3}\right]-1.
\end{align*}
Therefore there exist traces of $p_j^{(k)}$, $g_0^{(k)}$ and $g_j^{(k)}$ when $k=0,1,...,\left[\frac{s}{3}\right]-1$. It remains to study the existence of traces for $p_j^{\left(\left[\frac{s}{3}\right]\right)},\ g_0^{\left(\left[\frac{s}{3}\right]\right)}$ and $g_j^{\left(\left[\frac{s}{3}\right]\right)}$. As before, we do this by dividing the analysis into cases

The following table summarizes the relationship between the existence of traces and the value of $s>3$:
\begin{center}
	\begin{tabular}{|c|c|c|}
		\hline
		There exist traces of&For $k$ equal to&When\\
		\hline
		$\phi_{k,j},\ \phi_{k,j}',\ \phi_{k,j}'',\ p_j^{(k)},\ g_0^{(k)},\ g_j^{(k)}$ &$0,1,...,\left[\frac{s}{3}\right]-1$&$s>3$\\
		\hline
		$\phi_{k,j},\ p_j^{(k)}$&$\left[\frac{s}{3}\right]$&$s>3,\ 1/2<s-3\left[\frac{s}{3}\right]< 3$\\
		\hline
		$\phi_{k,j}',\ g_j^{(k)}$&$\left[\frac{s}{3}\right]$&$s>3,\ 3/2<s-3\left[\frac{s}{3}\right]< 3$\\
		\hline
		$\phi_{k,j}'',\ g_0^{(k)}$&$\left[\frac{s}{3}\right]$&$s>3,\ 5/2<s-3\left[\frac{s}{3}\right]< 3$\\
		\hline
	\end{tabular}
\end{center}
Hence, we arrive at the following definition for $s$ compatibility in the case $s>3$.
\begin{definition}[$s$-compatibility for $s>3$]
Given $T>0$ and $s>3$ we say that
$$(u^0,g_0,g,p)\in \mathbb{H}^s(\mathcal{T})\times H^\frac{s-1}{3}(0,T)\times\left[H^\frac{s}{3}(0,T)\right]^{N}\times \left[H^\frac{s+1}{3}(0,T)\right]^N$$
is $s$-compatible when, for $j=1,...,N$
\begin{align*}
	\begin{cases}
		\phi_{k,j}(0)=\phi_{k,1}(0),\\
		\displaystyle\alpha\sum_{j=1}^N\phi_{k,j}''(0)=-\alpha\phi_{k,1}(0)-\frac{N}{3}\sum_{l=0}^k\binom{k}{l}\phi_{l,1}(0)\phi_{k-l,1}(0)+g_0^{(k)}(0),\\
		\phi_{k,j}(l_j)=p_j^{(k)}(0),\\
		\phi_{k,j}'(l_j)=g_j^{(k)}(0),
	\end{cases} 
\end{align*}
for $k=0,1,...,\left[\frac{s}{3}\right]-1$ and
\begin{align*}
	\left\{
	\begin{array}{ll}
		\phi_{k,j}(0)=\phi_{k,1}(0)\text{ \ \ and \ \ }\phi_{k,j}(l_j)=p_j^{(k)}(0),&1/2<s-3\left[\frac{s}{3}\right]\\
		\phi_{k,j}'(l_j)=g_j^{(k)}(0),&3/2<s-3\left[\frac{s}{3}\right]\\
		\displaystyle\sum_{j=1}^N\phi_{k,j}''(0)=-\alpha\phi_{k,1}(0)-\frac{N}{3}\sum_{l=0}^k\binom{k}{l}\phi_{l,1}(0)\phi_{k-l,1}(0)+g_0^{(k)}(0),&5/2<s-3\left[\frac{s}{3}\right]
	\end{array}
	\right.\ \ \ j=1,...,N
\end{align*}
for $k=\left[\frac{s}{3}\right]$.
\end{definition}

We conclude this part by observing that an analogous $s$-compatibility condition can be formulated for the linear system by removing the nonlinear terms. This linear compatibility will be invoked throughout the analysis, without restating its definition each time.

\section{The linear problem}\label{sec3} Having established all the \( s \)-compatibility conditions relevant to our framework, we now turn to the main objective of this section: to prove the well-posedness of the following linear nonhomogeneous initial-boundary value problem:
\begin{align}\label{LkdV}
	\left\{
	\begin{array}{ll}
		\partial_t u_j+\partial_x u_j+\partial_x^3 u_j=f_j,&t \in (0,T),\ x \in (0,l_j),\ j=1,...,N,\\
		u_j(t,0)=u_1(t,0),&t\in (0,T),\ j=1,...,N,\\
		\displaystyle\sum_{j=1}^N\partial_x^2 u_j(t,0)=-\alpha u_1(t,0)+g_0(t),&t \in (0,T),\\				
		u_j(t,l_j)=p_j(t),\ \ \ \ \ \partial_x u_j(t,l_j)=g_j(t),&t\in (0,T),\ j=1,...,N,\\
		u_j(0,x)=u_j^0(x),&x \in (0,l_j),\ j=1,...,N,
	\end{array}
	\right.
\end{align}
for $f\in W^{\frac{s}{3},1}(0,T;\mathbb{L}^2(\mathcal{T}))\cap L^{\frac{6}{6-s}}(0,T;\mathbb{H}^\frac{s}{3}(\mathcal{T}))$ and for data
$$(u^0,g_0,g,p)\in \mathbb{H}^s(\mathcal{T})\times H^\frac{s-1}{3}(0,T)\times\left[H^\frac{s}{3}(0,T)\right]^{N}\times \left[H^\frac{s+1}{3}(0,T)\right]^N$$
satisfying the compatibility conditions, when $s \in [0,3]$.
\subsection{The case $s=0$}
We begin by considering the homogeneous boundary value problem
\begin{align}\label{LkdV-1}
	\left\{
	\begin{array}{ll}
		\partial_t u_j+\partial_x u_j+\partial_x^3 u_j=f_j,&t \in (0,T),\ x \in (0,l_j),\ j=1,...,N,\\
		u_j(t,0)=u_1(t,0),&t\in (0,T),\ j=1,...,N,\\
		\displaystyle\sum_{j=1}^N\partial_x^2 u_j(t,0)=-\alpha u_1(t,0),&t \in (0,T),\\				
		u_j(t,l_j)=\partial_x u_j(t,l_j)=0,&t\in (0,T),\ j=1,...,N,\\
		u_j(0,x)=u_j^0(x),&x \in (0,l_j),\ j=1,...,N.
	\end{array}
	\right.
\end{align}
Defining the operator $A:D(A)\subset \mathbb{L}^2(\mathcal{T})\rightarrow \mathbb{L}^2(\mathcal{T})$ by
\begin{align*}
D(A)
&=\left\{u \in \mathbb{H}^3(\mathcal{T})\cap\mathbb{H}_e^2(\mathcal{T});\ \displaystyle\sum_{j=1}^N\partial_x^2 u_j(0)=-\alpha u_1(0) \right\}
\text{ \ and \ }
Au=-\partial_x u-\partial_x^3u.
\end{align*}
The adjoint operator $A^*:D(A^*)\subset \mathbb{L}^2(\mathcal{T})\rightarrow \mathbb{L}^2(\mathcal{T})$ is given by
\begin{align*}
	D(A^*)
	&=\left\{v \in \mathbb{H}^3(\mathcal{T})\cap\mathbb{H}_e^1(\mathcal{T}); \begin{array}{l}
		\displaystyle\sum_{j=1}^N\partial_x^2 v_j(0)=-\alpha v_1(0),\\
		\partial_x v_j(0)=0,\ j=1,...,N
	\end{array}
	\right\}
	\text{ \ and \ }
	Av=-\partial_x v-\partial_x^3v.
\end{align*}
The first result was shown in \cite{Ammari and Crepeau 2018} and is a direct consequence of the semigroup theory. 
\begin{proposition}\label{prop 2.1}
Let $T>0$ be given. For every $f\in C^1\left([0,T],\mathbb{L}^2(\mathcal{T})\right)$ and $u^0 \in D(A)$, the problem \eqref{LkdV-1} has a unique classical solution $u \in C^1\left([0,T],\mathbb{L}^2(\mathcal{T})\right)\cap C\left([0,T],D(A)\right)$.
\end{proposition}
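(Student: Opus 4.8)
The plan is to show that $A$ generates a strongly continuous semigroup (in fact a $C_0$-semigroup of contractions up to an exponential weight) on $\mathbb{L}^2(\mathcal{T})$, and then invoke the standard inhomogeneous-Cauchy-problem theory for semigroups. Once $A$ is the generator of a $C_0$-semigroup $(S(t))_{t\ge 0}$, for $u^0\in D(A)$ and $f\in C^1([0,T],\mathbb{L}^2(\mathcal{T}))$ the mild solution $u(t)=S(t)u^0+\int_0^tS(t-\tau)f(\tau)\,d\tau$ is automatically a classical solution lying in $C^1([0,T],\mathbb{L}^2(\mathcal{T}))\cap C([0,T],D(A))$, which is exactly the asserted regularity. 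So the whole content is the generation statement, and that is precisely what is recorded in \cite{Ammari and Crepeau 2018}.

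To make the generation argument self-contained I would proceed via Lumer--Phillips. First I would check that $D(A)$ is dense in $\mathbb{L}^2(\mathcal{T})$ (it contains, e.g., smooth vector fields compactly supported away from the vertex and the outer endpoints, suitably corrected to meet the vertex conditions). Next, the dissipativity estimate: for $u\in D(A)$, integrate $(Au,u)_{\mathbb{L}^2(\mathcal{T})}=-\sum_j\int_0^{l_j}(\partial_x u_j+\partial_x^3u_j)u_j\,dx$ by parts on each edge. The $\partial_x u_j$ term contributes boundary values $\tfrac12 u_j(0)^2-\tfrac12 u_j(l_j)^2$, and the $\partial_x^3 u_j$ term contributes $\tfrac12(\partial_x u_j(l_j))^2-\tfrac12(\partial_x u_j(0))^2-u_j(l_j)\partial_x^2u_j(l_j)+u_j(0)\partial_x^2u_j(0)$ after two integrations by parts. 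Using the right endpoint conditions $u_j(l_j)=\partial_x u_j(l_j)=0$ built into $D(A)$, the continuity condition $u_j(0)=u_1(0)=:a$, and the vertex condition $\sum_j\partial_x^2 u_j(0)=-\alpha a$, the boundary contributions collapse to $\tfrac{N}{2}a^2 - a\cdot\alpha a=\left(\tfrac N2-\alpha\right)a^2\le 0$ precisely because $\alpha>\tfrac N2$. Hence $(Au,u)_{\mathbb{L}^2(\mathcal{T})}\le 0$ and $A$ is dissipative; the same computation on $D(A^*)$ (where now $\partial_x v_j(0)=0$ kills the cross term and the $\tfrac12 v_j(0)^2$ sum gives $\tfrac N2 a^2$, again controlled) shows $A^*$ is dissipative, so $A$ is $m$-dissipative and Lumer--Phillips applies. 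Alternatively, once dissipativity of $A$ and $A^*$ is known, $A$ is closed and densely defined with dissipative adjoint, hence generates a $C_0$-semigroup of contractions.

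The remaining point, and the one requiring slightly more care, is the range/closedness condition $\operatorname{Range}(I-A)=\mathbb{L}^2(\mathcal{T})$, i.e. solvability of $u-Au=f$, that is $u_j+\partial_x u_j+\partial_x^3 u_j=f_j$ on each edge with the coupled boundary conditions. I would set this up as an elliptic boundary-value problem on the compact graph: the third-order operator with the five scalar boundary conditions per edge collapsed by the vertex coupling yields a Fredholm problem of index zero, and injectivity (which follows from the strict dissipativity inequality $\left(\tfrac N2-\alpha\right)a^2<0$ unless $a=0$, forcing $u\equiv 0$) gives surjectivity. This elliptic solvability on the graph with the nonstandard vertex transmission condition is the main obstacle; everything else is routine integration by parts. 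Since the statement is quoted from \cite{Ammari and Crepeau 2018}, in the paper it suffices to cite that reference and note that the inhomogeneous term is handled by the classical result that $t\mapsto\int_0^tS(t-\tau)f(\tau)\,d\tau$ is a classical solution when $f\in C^1([0,T],\mathbb{L}^2(\mathcal{T}))$.
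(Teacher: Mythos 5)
Your proposal is correct and follows essentially the same route as the paper, which simply cites \cite{Ammari and Crepeau 2018} for the fact that $A$ generates a $C_0$-semigroup of contractions (dissipativity of $A$ and of $A^*$ plus closedness and density) and then applies the standard result that for $u^0\in D(A)$ and $f\in C^1([0,T],\mathbb{L}^2(\mathcal{T}))$ the mild solution is a classical one in $C^1([0,T],\mathbb{L}^2(\mathcal{T}))\cap C([0,T],D(A))$. The only bookkeeping slips are harmless: the boundary terms in your computation actually give $\bigl(\tfrac{N}{2}-\alpha\bigr)u_1(0)^2-\tfrac12\sum_{j}(\partial_x u_j(0))^2\le 0$ (you dropped the last, nonpositive, sum), and the adjoint is $A^*v=\partial_x v+\partial_x^3 v$ with $v_j(l_j)=0$, $\partial_x v_j(0)=0$, $v_j(0)=v_1(0)$ and $\sum_{j}\partial_x^2 v_j(0)=(\alpha-N)v_1(0)$, for which the analogous integration by parts gives $\bigl(\tfrac{N}{2}-\alpha\bigr)v_1(0)^2-\tfrac12\sum_{j}(\partial_x v_j(l_j))^2\le 0$, so the dissipative-adjoint route (bypassing your Fredholm range argument) indeed closes.
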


Now, let us go back to the problem \eqref{LkdV}. To deal with classical solutions for the nonhomogeneous problem, we recall the space $\mathbb{H}^3_c(\mathcal{T})$, consisting of functions in $\mathbb{H}^3(\mathcal{T})$ that are continuous across the junction. At this stage it is convenient to introduce the space \[C_l^2([0,T])=\left\{\varphi \in C^2([0,T]);\ \varphi(0)=0\right\}.\]
\begin{proposition}\label{prop 2.2}
For every $T>0$, $u_0 \in D(A)$, $g_0,g_j,p_j \in C_l^2([0,T])$ and $f \in C^1\left([0,T];\mathbb{L}^2(\mathcal{T})\right)$ there exists a unique classical solution $u \in C^1\left([0,T],\mathbb{L}^2(\mathcal{T})\right)\cap C\left([0,T],\mathbb{H}^3_c(\mathcal{T})\right)$ to the problem \eqref{LkdV}. 
\end{proposition}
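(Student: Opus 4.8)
The plan is to reduce the nonhomogeneous problem \eqref{LkdV} to the homogeneous one \eqref{LkdV-1} covered by Proposition \ref{prop 2.1} by subtracting off an explicit lifting of the boundary data. First I would construct, on each edge $e_j$, a smooth-in-time function $w_j(t,x)$ whose traces at $x=0$ and $x=l_j$ match the prescribed boundary data: concretely, pick a common value $\gamma(t):=$ (the desired value of $u_j(t,0)$, to be determined so that the junction conditions hold) and build $w_j$ as a polynomial in $x$ of degree $\le 4$ whose coefficients are time-dependent and chosen so that $w_j(t,0)=\gamma(t)$, $w_j(t,l_j)=p_j(t)$, $\partial_x w_j(t,l_j)=g_j(t)$, and $\sum_j \partial_x^2 w_j(t,0)=-\alpha\gamma(t)+g_0(t)$. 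Since we have $N$ edges and one scalar equation at the vertex involving $\sum_j \partial_x^2 w_j(t,0)$, there is enough freedom: fix $\gamma(t)\equiv 0$ (or any convenient smooth choice vanishing at $t=0$), then on each edge solve the local $4\times4$ (or smaller) linear system for the polynomial coefficients, distributing the single constraint $\sum_j\partial_x^2 w_j(t,0)=g_0(t)$ across the edges, e.g. putting $\partial_x^2 w_j(t,0)=g_0(t)/N$ on each edge. Because $g_0,g_j,p_j\in C_l^2([0,T])$ all vanish at $t=0$ (that is the role of the subscript $l$), the resulting $w=(w_1,\dots,w_N)$ lies in $C^2([0,T];\mathbb{H}^3_c(\mathcal{T}))$, vanishes at $t=0$, and at $t=0$ belongs to $D(A)$ trivially since $w(0,\cdot)=0$.

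Next I would set $v:=u-w$, so that $v$ should solve a problem of the form \eqref{LkdV-1} with modified data: $v$ satisfies $\partial_t v_j+\partial_x v_j+\partial_x^3 v_j=\tilde f_j:=f_j-(\partial_t w_j+\partial_x w_j+\partial_x^3 w_j)$, the continuity condition $v_j(t,0)=v_1(t,0)$ (which holds because all $w_j(t,0)=\gamma(t)$ agree), the vertex condition $\sum_j\partial_x^2 v_j(t,0)=-\alpha v_1(t,0)$ (by construction of $w$), the homogeneous right-endpoint conditions $v_j(t,l_j)=\partial_x v_j(t,l_j)=0$, and initial datum $v_j(0,x)=u_j^0(x)-w_j(0,x)=u_j^0(x)$. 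Here I must check that $\tilde f\in C^1([0,T];\mathbb{L}^2(\mathcal{T}))$: this follows since $f\in C^1([0,T];\mathbb{L}^2(\mathcal{T}))$ by hypothesis and $w\in C^2([0,T];\mathbb{H}^3_c(\mathcal{T}))$ gives $\partial_t w+\partial_x w+\partial_x^3 w\in C^1([0,T];\mathbb{L}^2(\mathcal{T}))$ (the spatial derivatives up to order $3$ are fine because $w$ is polynomial in $x$ with $C^2$ coefficients, and actually one gains that it is $C^1$ in $t$ with values in $\mathbb{L}^2$). Then I invoke Proposition \ref{prop 2.1} — with the nonhomogeneous forcing handled by Duhamel, exactly as it is already stated there — to get a unique $v\in C^1([0,T];\mathbb{L}^2(\mathcal{T}))\cap C([0,T];D(A))$. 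Finally $u:=v+w$ is the desired solution, and it lies in $C^1([0,T];\mathbb{L}^2(\mathcal{T}))\cap C([0,T];\mathbb{H}^3_c(\mathcal{T}))$ since $D(A)\subset\mathbb{H}^3_c(\mathcal{T})$ and $w\in C([0,T];\mathbb{H}^3_c(\mathcal{T}))$; uniqueness of $u$ follows from uniqueness of $v$ (the map $u\mapsto u-w$ is a bijection between solution sets).

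The main obstacle I anticipate is purely bookkeeping at the vertex: verifying that the lifting $w$ can be chosen to satisfy simultaneously all the boundary relations of \eqref{LkdV} while keeping $w(0,\cdot)\in D(A)$ — in particular reconciling the scalar junction identity $\sum_j\partial_x^2 w_j(t,0)=-\alpha w_1(t,0)+g_0(t)$ with the per-edge right-endpoint data and with the regularity/compatibility at $t=0$. Choosing $\gamma\equiv0$ sidesteps the $-\alpha w_1(t,0)$ term and makes $w(0,\cdot)=0$, which kills all compatibility worries, so this obstacle is mild; the degree-$4$ polynomial ansatz on each edge has exactly enough coefficients (values and first two derivatives at $0$, value and first derivative at $l_j$) to meet the five local requirements, and these are linearly independent for $l_j>0$ so the coefficient system is invertible with coefficients depending continuously (indeed smoothly) on $(g_0,g_j,p_j)$, hence inheriting the $C^2_l$ regularity in $t$. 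A secondary point to be careful about is that Proposition \ref{prop 2.1} as stated covers only \eqref{LkdV-1}; the nonhomogeneous forcing $\tilde f\in C^1$ is absorbed by the standard inhomogeneous semigroup formula $v(t)=e^{tA}u^0+\int_0^t e^{(t-\tau)A}\tilde f(\tau)\,d\tau$, whose $C^1\cap C(\,\cdot\,;D(A))$ regularity is the classical result for $C^1$ forcing and a generator $A$, which I will simply cite.
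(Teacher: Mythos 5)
Your proposal is correct and follows essentially the same route as the paper: subtract an explicit lifting of the boundary data (the paper uses fixed smooth spatial profiles $\phi_j,\psi_j,\theta_j$ multiplied by $g_0,p_j,g_j$, while you use time-dependent polynomial interpolants, which is only a cosmetic difference), reduce to the homogeneous-boundary problem \eqref{LkdV-1}, and apply Proposition \ref{prop 2.1} — which, as stated, already covers the $C^1$-in-time forcing, so your Duhamel remark is not even needed. The verification that the lifted data vanish at $t=0$ (the role of $C^2_l$) and the uniqueness argument via the bijection $u\mapsto u-w$ match the paper's proof.
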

\begin{proof}
Suppose $u_0 \in D(A)$, $g_0,g_j,p_j \in \left\{\varphi \in C^2([0,T]);\ \varphi(0)=0\right\}$ and $f \in C^1\left([0,T];\mathbb{L}^2(\mathcal{T})\right)$. Consider $\phi_j, \psi_j, \theta_j\in C^\infty([0,l_j])$ satisfying
\begin{align*}
\begin{cases}
\phi_j(0)=\phi_1(0),&j=2,...,N,\\
\phi_j(l_j)=0,&j=1,...,N,\\
\phi_{j}^{\prime}(l_j)=0,&j=1,...,N,\\
\displaystyle\sum_{j=1}^N\phi_{j}^{\prime\prime}(0)=-\alpha\phi_1(0)+1,&
\end{cases}\\
\begin{cases}
	\psi_j(0)=0,&j=1,...,N,\\
	\psi_j(l_j)=1,&j=1,...,N,\\
	\psi_{j}^{\prime}(l_j)=0,&j=1,...,N,\\
	\psi_{j}^{\prime\prime}(0)=0,&j=1,...,N,
\end{cases}\qquad 
\begin{cases}
	\theta_j(0)=0,&j=1,...,N,\\
	\theta_j(l_j)=0,&j=1,...,N,\\
 	\theta_{j}^{\prime}(l_j)=1,&j=1,...,N,\\
	\theta_{j}^{\prime\prime}(0)=0,&j=1,...,N.
\end{cases}\\
\end{align*}
For $t \in [0,T]$, define $\tilde{f}(t):=\left(\tilde{f}_1(t,\cdot),...,\tilde{f}_N(t,\cdot)\right)$ where 
\begin{align*}
\tilde{f}_j:=f_j-\phi_jg_{0t}-(\phi_{j}^{\prime}+\phi_{j}^{\prime\prime\prime})g_0-\psi_jp_{jt}-(\psi_{j}^{\prime}+\psi_{j}^{\prime\prime\prime})p_j-\theta_jg_{jt}-(\theta_{j}^{\prime}+\theta_{j}^{\prime\prime\prime})g_j
\end{align*}
and note that $\tilde{f} \in C^1\left([0,T],\mathbb{L}^2(\mathcal{T})\right)$. Let $v\in  C^1\left([0,T],\mathbb{L}^2(\mathcal{T})\right)\cap C\left([0,T],D(A)\right)$ be the unique classical solution to the problem
\begin{align}\label{2.6}
	\begin{cases}
		\partial_t v_j+\partial_x v_j+\partial_x^3 v_j=\tilde{f}_j,&t \in (0,T),\ x \in (0,l_j),\ j=1,...,N,\\
		v_j(t,0)=v_1(t,0),&t\in (0,T),\ j=1,...,N,\\
		\displaystyle\sum_{j=1}^N\partial_x^2 v_j(t,0)=-\alpha v_1(t,0),&t \in (0,T),\\				
		v_j(t,l_j)=\partial_x v_j(t,l_j)=0,&t\in (0,T),\ j=1,...,N,\\
		v_j(0,x)=u_j^0(x),&x \in (0,l_j),\ j=1,...,N,
	\end{cases}
\end{align}
given by Proposition \ref{prop 2.1}. For each $t \in [0,T]$, define $u(t,\cdot)=\in \mathbb{L}^2(\mathcal{T})$ putting
\begin{align}\label{2.7}
u_j(t,x)=v_j(t,x)+\phi_j(x)g_0(t)+\psi_j(x)p_j(t)+\theta_j(x)g_j(t),\quad x \in (0,l_j),\ j=1,...,N.
\end{align}
Since $\phi_j,\psi_j,\theta_j\in C^\infty\left([0,l_j]\right)$, $g_0,p_j,g_j\in C^2\left([0,T]\right)$ and $v\in  C^1\left([0,T],\mathbb{L}^2(\mathcal{T})\right)\cap C\left([0,T],D(A)\right)$, it follows that
\begin{align*}
u \in C^1\left([0,T],\mathbb{L}^2(\mathcal{T})\right)\cap C\left([0,T],\mathbb{H}_{c}^{3}(\mathcal{T})\right).
\end{align*}
Furthermore, we have
\begin{align*}
\partial_t u_j+\partial_x u_j+\partial_x^3 u_j=&\left(\partial_t v_j+\partial_x v_j+\partial_x^3 v_j\right)+\phi_jg_{0t}+\left(\phi_{j}^{\prime}+\phi_{j}^{\prime\prime\prime}\right)g_0\\
&+\psi_jp_{jt}+\left(\psi_{j}^{\prime}+\psi_{j}^{\prime\prime\prime}\right)p_j+\theta_jg_{jt}+\left(\theta_{j}^{\prime}+\theta_{j}^{\prime\prime\prime}\right)g_j
\end{align*}
so from \eqref{2.6},
\begin{align*}
	\partial_t u_j+\partial_x u_j+\partial_x^3 u_j=\tilde{f}&+\phi_jg_{0t}+\left(\phi_{j}^{\prime}+\phi_{j}^{\prime\prime\prime}\right)g_0+\psi_jp_{jt}+\left(\psi_{j}^{\prime}+\psi_{j}^{\prime\prime\prime}\right)p_j\\
	&+\theta_jg_{jt}+\left(\theta_{j}^{\prime}+\theta_{j}^{\prime\prime\prime}\right)g_j
\end{align*}
and, using the definition of $\tilde{f}$, it follows that
\begin{align}\label{2.8}
		\partial_t u_j+\partial_x u_j+\partial_x^3 u_j=f,\quad j=1,...,N.
\end{align}

By \eqref{2.8} and by the choice of the lifting functions $\phi, \psi, \theta$, after some straightforward computations, we conclude that $u$ is a classical solution to \eqref{LkdV}. For the uniqueness, suppose that $\tilde{u}$ is a classical solution of \eqref{LkdV}. Then defining $\tilde{v}=(\tilde{v}_1,...,\tilde{v}_N)$, where
\begin{align}\label{2.14}
\tilde{v}_j(t,x)=\tilde{u}_j(t,x)-\phi_j(x)g_0(t)-\psi_j(x)p_j(t)-\theta_j(x)g_j(t),\ x \in (0,l_j),\ j=1,...,N,
\end{align}
one can see that $\tilde{v}$ solves \eqref{2.6} and, by uniqueness guaranteed in Proposition \ref{prop 2.1}, results $\tilde{v}=v$. But from \eqref{2.7} we have
\begin{align}\label{2.15}
	v_j(t,x)=u_j(t,x)-\phi_j(x)g_0(t)-\psi_j(x)p_j(t)-\theta_j(x)g_j(t),\ x \in (0,l_j),\ j=1,...,N.
\end{align}
Subtracting \eqref{2.15} from \eqref{2.14} we obtain
$\tilde{u}_j=u$, finishing the proof. 
\end{proof}

Our next goal is to introduce a notion of mild solution for problem \eqref{LkdV} with less regular data, namely,
\[
u^0 \in \mathbb{L}^2(\mathcal{T}), \quad 
g_0 \in H^{-\frac{1}{3}}(0,T), \quad 
g_j \in L^2(0,T), \quad 
p_j \in H^{\frac{1}{3}}(0,T) \ \text{for } j = 1, \ldots, N,
\]
and
\[
f \in L^1(0,T; \mathbb{L}^2(\mathcal{T})).
\]
To this end, it is convenient to recall some results concerning the well-posedness of the following single equations:
\begin{align}\label{2.21b}
	\left\{
	\begin{array}{ll}
		\partial_t\theta+\partial_x \theta+\partial_x^3\theta=f&t \in (0,T), \ x \in (0,L),\\
		\theta(t,0)=h_1(t),\ \ \ \ \theta(t,L)=h_2(t),\ \ \ \ \partial_x\theta(t,L)=h_3(t),&t\in(0,T)\\
		\theta(0,x)=\phi(x)&x \in (0,L)
	\end{array}
	\right.
\end{align}
and
\begin{align}\label{2.22b}
\left\{
\begin{array}{ll}
\partial_t \psi+\partial_x\psi+\partial_x^3\psi=f,&t \in (0,T), \ x \in (0,L),\\
\partial_x^2\psi(t,0)+\frac{\alpha}{N}\psi(t,0)=h_1(t),&t\in(0,T)\\
\psi(t,L)=h_2(t),\ \ \ \ \ \partial_x\psi(t,L)=h_3(t),&t \in (0,T),\\
\psi(0,x)=\phi(x),&x \in (0,L).
\end{array}
\right.
\end{align}

Here, it is necessary to introduce some definitions and notations. Given $T,L>0$ and $s\geq 0$ we define the space
\begin{align*}
	X_{s,T}:=H^s(0,L)\times H^{\frac{s+1}{3}}(0,T)\times  H^{\frac{s+1}{3}}(0,T)\times  H^{\frac{s}{3}}(0,T)
\end{align*}
with the norm
\begin{align*}
	\|(\phi,h_1,h_2,h_3)\|_{X_{s,T}}=\left(\|\phi\|_{H^s(0,L)}^2+\|h_1\|_{H^{\frac{s+1}{3}}(0,T)}^2+\|h_2\|_{H^{\frac{s+1}{3}}(0,T)}^2+\|h_3\|_{H^{\frac{s}{3}}(0,T)}^2\right)^\frac{1}{2},
\end{align*}
the space
\begin{align*}
	Y_{s,T}:=H^s(0,L)\times H^{\frac{s-1}{3}}(0,T)\times  H^{\frac{s+1}{3}}(0,T)\times  H^{\frac{s}{3}}(0,T)
\end{align*}
with the norm
\begin{align*}
	\|(\phi,h_1,h_2,h_3)\|_{Y_{s,T}}=\left(\|\phi\|_{H^s(0,L)}^2+\|h_1\|_{H^{\frac{s-1}{3}}(0,T)}^2+\|h_2\|_{H^{\frac{s+1}{3}}(0,T)}^2+\|h_3\|_{H^{\frac{s}{3}}(0,T)}^2\right)^\frac{1}{2}
\end{align*}
and the space 
\begin{align*}
	Z_{s,T}=C([0,T],H^s(0,L))\cap L^2(0,T,H^{s+1}(0,L))
\end{align*}
with the norm
\begin{align*}
	\|v\|_{Z_{s,T}}=\max_{t\in [0,T]}\|v(t,\cdot)\|_{H^s(0,L)}+\|v\|_{L^2(0,T,H^{s+1}(0,L))}.
\end{align*}

For simplicity we will use the notation $\vec{h}=(h_1,h_2,h_3)$ so $(\phi,\vec{h})=(\phi,h_1,h_2,h_3)$. The problem \eqref{2.21b} has already been addressed in \cite{BSZ 2003}, where studies carried out provide the following result. Hereafter, $C$ denotes a generic positive constant, possibly changing from line to line, independent of the data and of time unless otherwise specified.
\begin{proposition}[Bona, Sun and Zhang, \cite{BSZ 2003}]\label{prop 2.3}
	Given $T>0$, for every $(\phi,\vec{h}) \in X_{0,T}$ and $f \in L^1(0,T,L^2(0,L))$ there exists a unique solution $\theta \in Z_{0,T}$ to the problem \eqref{2.21b} which satisfies
	\begin{align*}
	\partial_x^k\theta \in C_b\left([0,L]; H^\frac{1-k}{3}(0,T)\right),\ k=0,1,2.
	\end{align*}
	Moreover,
	\begin{align*}
		\|\theta\|_{Z_{0,T}}+\sum_{k=0}^2\|\partial_x^k\theta\|_{C_b([0,L]; H^\frac{1-k}{3}(0,T))}\leq C\left(\|(\phi,\vec{h})\|_{X_{0,T}}+\|f\|_{L^1(0,T;L^2(0,L))}\right)
	\end{align*}
	for some constant $C>0$.
\end{proposition}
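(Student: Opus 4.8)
Since the statement is precisely the scalar result of Bona, Sun, and Zhang \cite{BSZ 2003}, in the body of the paper we invoke it directly; here we only indicate how the proof proceeds. The plan is to split the solution additively according to the data, writing $\theta=\theta_f+\theta_b$, where $\theta_f$ carries the initial datum $\phi$ and the forcing $f$ under the homogeneous boundary conditions $\theta_f(t,0)=\theta_f(t,L)=\partial_x\theta_f(t,L)=0$, while $\theta_b$ solves the pure boundary-forcing problem associated with \eqref{2.21b} in which $\phi=0$, $f=0$, and only $h_1,h_2,h_3$ are prescribed. Each piece is estimated separately in $Z_{0,T}$ and in the trace spaces $C_b([0,L];H^{(1-k)/3}(0,T))$, and the two estimates are summed.

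For $\theta_f$, I would introduce the operator $A\theta=-\theta'-\theta'''$ with domain $\{v\in H^3(0,L);\ v(0)=v(L)=v'(L)=0\}$. A short integration by parts shows $\langle A\theta,\theta\rangle_{L^2}=-\tfrac12|\theta'(0)|^2\le 0$, so $A$ is dissipative and, together with a solvability check for $A^*$, generates a $C_0$-semigroup $\{W(t)\}_{t\ge0}$ on $L^2(0,L)$; Duhamel's formula then gives $\theta_f(t)=W(t)\phi+\int_0^t W(t-\tau)f(\tau)\,d\tau$. Standard multiplier arguments yield the energy bound together with the Kato smoothing effect, producing the $L^2(0,T;H^1(0,L))$ gain from $L^2$ data, hence $\theta_f\in Z_{0,T}$ with the bound controlled by $\|\phi\|_{L^2(0,L)}+\|f\|_{L^1(0,T;L^2(0,L))}$. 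The sharp trace (hidden regularity) estimates for $\partial_x^k\theta_f$, $k=0,1,2$, follow from the same multipliers applied to suitable weighted quantities, giving $\partial_x^k\theta_f\in C_b([0,L];H^{(1-k)/3}(0,T))$.

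For $\theta_b$, the natural tool is the Laplace transform in $t$. Transforming \eqref{2.21b} (with $\phi=f=0$) turns the equation into the third-order ODE $\hat\theta'''+\hat\theta'+s\,\hat\theta=0$ in $x$, whose characteristic equation is $\mu^3+\mu+s=0$. Writing $\hat\theta$ as a linear combination of $e^{\mu_j(s)x}$ over the three $s$-dependent roots and matching the transformed boundary data $\hat h_1,\hat h_2,\hat h_3$ expresses $\hat\theta(s,\cdot)$ explicitly; inverting the transform represents $\theta_b$ through boundary integral operators acting on $(h_1,h_2,h_3)$. The task then reduces to proving the operator bounds for these integral operators in $Z_{0,T}$ and in the trace spaces, with the correct regularity gains $(s+1)/3$ for $h_1,h_2$ and $s/3$ for $h_3$ (so $H^{1/3},H^{1/3},L^2$ at $s=0$).

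The main obstacle is precisely these boundary-operator estimates. I would partition the inversion contour into a bounded low-frequency part and an unbounded high-frequency part. On the high-frequency region the cubic admits the asymptotic description of one real root behaving like $-s^{1/3}$ and a pair of complex conjugate roots, so that the selection of the decaying/growing modes matched to the boundaries is unambiguous; the resulting oscillatory integrals are then controlled by stationary-phase or van der Corput type bounds, which is what recovers the sharp Sobolev indices rather than lossy ones. On the low-frequency region one checks that the roots remain separated and that the relevant real parts keep the correct sign, so the representation has no spurious singularities and contributes only bounded, smoothing terms. Summing the contributions of $\theta_f$ and $\theta_b$ gives the stated inequality, and uniqueness follows by reducing to the homogeneous case (where the energy identity forces $\theta\equiv0$) together with a density argument from the classical solutions.
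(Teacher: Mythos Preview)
Your proposal is correct and matches the paper's treatment: the paper does not prove this proposition at all but simply cites it as the scalar result of Bona, Sun, and Zhang~\cite{BSZ 2003}. Your additional sketch of the original argument (semigroup plus Kato smoothing for the homogeneous-boundary piece, Laplace-transform boundary integral operators with asymptotic root analysis for the pure boundary-forcing piece) is an accurate summary of the method in~\cite{BSZ 2003}, though the paper itself includes no such outline.
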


To the well-posedness for the problem \eqref{2.22b}, we start by considering the IBVP
\begin{align}\label{2.23b}
\begin{cases}
	\partial_t v+\partial_x^3v=f,&t \in (0,T), \ x \in (0,L),\\
	\partial_x^2v(t,0)=h_1(t),\ \ \ \ \ v(t,L)=h_2(t),\ \ \ \ \ \partial_xv(t,L)=h_3(t),&t \in (0,T),\\
	v(0,x)=\phi(x),&x \in (0,L).
\end{cases}
\end{align}
This system was studied in \cite{Caicedo and Zhang 2017}, where the following result was proven.
\begin{proposition}[Caicedo and Zhang, \cite{Caicedo and Zhang 2017}]\label{prop 2.4}
Let $T>0$ and $0\leq s\leq 3$ be given. For any $f \in L^1(0,T;H^s(0,L))$ and $(\phi,\vec{h})\in Y_{s,T}$ satisfying the $s$-compatibility condition
\begin{align*}
\begin{cases}
\phi(L)=h_2(0)&\text{ if }\frac{1}{2}<s\leq 3,\\
\phi'(L)=h_3(0)&\text{ if }\frac{3}{2}<s\leq 3,\\
\phi''(0)=h_1(0)&\text{ if }\frac{5}{2}<s\leq 3,
\end{cases}
\end{align*}
the IBVP \eqref{2.23b} admits a unique solution $v\in Z_{s,T}$ with $\partial_x^kv \in C_b\left([0,L]; H^\frac{1-k}{3}(0,T)\right),\ k=0,1,2.$ 
Moreover, there exists a positive constant $C$ such that
\begin{align*}
	\|v\|_{Z_{s,T}}+\sum_{k=0}^2\|\partial_x^kv\|_{C_b([0,L]; H^\frac{1-k}{3}(0,T))}\leq C\left(\|(\phi,\vec{h})\|_{Y_{s,T}}+\|f\|_{L^1(0,T;H^s(0,L))}\right).
\end{align*}
\end{proposition}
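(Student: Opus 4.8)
The plan is to prove Proposition \ref{prop 2.4} by the Fourier--Laplace boundary-integral-operator method of Bona--Sun--Zhang \cite{BSZ 2003}, adapted to the pure Airy operator and to the left-endpoint condition $\partial_x^2 v(t,0)=h_1(t)$. By linearity I would first decompose $v=v_{\mathrm{ivp}}+v_f+v_1+v_2+v_3$, where $v_{\mathrm{ivp}}$ carries the initial datum $\phi$ with zero forcing and zero boundary data, $v_f$ carries $f$ with zero initial and boundary data, and each $v_i$ carries a single nonzero boundary datum $h_i$ with zero initial datum. The contribution $v_f$ is then obtained from the solution operator of the homogeneous-boundary problem via Duhamel's principle, so it reduces to the estimates for the homogeneous-data pieces; the drift term $\partial_x v$ (absent here, present in \eqref{2.21b}) would only contribute a lower-order perturbation and does not alter the scheme.

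For the base case $s=0$ I would derive explicit representations. Taking the Laplace transform in $t$ converts $\partial_t v+\partial_x^3 v=0$ into the ODE $\lambda\widehat v+\partial_x^3\widehat v=0$, whose solutions are spanned by $e^{\mu_k(\lambda)x}$ with $\mu_k(\lambda)^3=-\lambda$, $k=1,2,3$; imposing $\partial_x^2\widehat v(0,\lambda)=\widehat h_1$, $\widehat v(L,\lambda)=\widehat h_2$, $\partial_x\widehat v(L,\lambda)=\widehat h_3$ produces a $3\times 3$ linear system for the coefficients. The crucial algebraic step is to show that its determinant $\Delta(\lambda)$ does not vanish for $\operatorname{Re}\lambda\ge 0$ and to obtain two-sided bounds on $\Delta$ and on the entries of $\Delta^{-1}$, treating $|\lambda|$ large and $|\lambda|$ small separately (the roots $\mu_k$ coalesce as $\lambda\to 0$). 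Shifting the contour to the imaginary axis then yields oscillatory-integral kernels for $v_{\mathrm{ivp}}$ and for each $v_i$, and the estimate
\begin{align*}
\|v\|_{Z_{0,T}}+\sum_{k=0}^2\|\partial_x^k v\|_{C_b([0,L];H^{\frac{1-k}{3}}(0,T))}\le C\left(\|(\phi,\vec h)\|_{Y_{0,T}}+\|f\|_{L^1(0,T;L^2(0,L))}\right)
\end{align*}
follows from van der Corput / stationary-phase bounds for the spatial profiles, a Kato-type smoothing estimate giving the gain of one derivative in the $L^2_tH^{s+1}_x$ component, and sharp trace ("hidden regularity") estimates yielding $\partial_x^k v\in C_b([0,L];H^{(1-k)/3}(0,T))$ uniformly in $x$. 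The $v_1$-piece saturates the regularity count, since $\partial_x^2$ at $x=0$ costs two spatial derivatives relative to Dirichlet data, which is exactly why $h_1$ lives in $H^{(s-1)/3}$ whereas $h_2\in H^{(s+1)/3}$.

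For the endpoint $s=3$ I would treat the $(\phi,\vec h,0)$ and $(0,0,f)$ parts separately. For the former, set $w:=\partial_t v$, which solves \eqref{2.23b} with $f=0$, boundary data $(h_1',h_2',h_3')$ and initial datum $w(0,\cdot)=-\phi'''\in L^2(0,L)$ (using $\phi\in H^3$); all of this data lies automatically in $Y_{0,T}$, so the $s=0$ result gives $w\in Z_{0,T}$. One then recovers $v$ from the elliptic relation $\partial_x^3 v=f-w$ together with the three boundary values, and the three $s$-compatibility conditions $\phi(L)=h_2(0)$, $\phi'(L)=h_3(0)$, $\phi''(0)=h_1(0)$ are precisely what guarantee (by uniqueness of that elliptic boundary-value problem) that the reconstructed $v$ satisfies $v(0,\cdot)=\phi$, hence $v\in C([0,T];H^3(0,L))$ with the asserted trace regularity; the $f$-part is handled directly by Duhamel from the homogeneous-data estimates. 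The intermediate range $0<s<3$ is then obtained by interpolation between $s=0$ and $s=3$: because the compatibility requirements switch on at the thresholds $s=\tfrac12,\tfrac32,\tfrac52$, one interpolates not the full spaces $Y_{s,T}$ but the closed subspaces cut out by the (linear) compatibility equalities active for the given $s$, or runs the nonlinear-interpolation argument of \cite{BSZ 2003}; compatibility is again what makes the reassembled solution lie in $C([0,T];H^s(0,L))$ rather than merely in $L^\infty$.

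I expect the genuinely hard part to be the harmonic-analysis estimates for the boundary-integral kernels — in particular establishing the sharp smoothing and trace bounds uniformly in $x\in[0,L]$, controlling the $v_1$-kernel, and handling the low-frequency ($\lambda\to 0$) regime where the characteristic roots degenerate and $\Delta(\lambda)$ must be kept bounded away from zero. By comparison, the compatibility/interpolation bookkeeping, while delicate, is routine once the $s=0$ and $s=3$ estimates are in place.
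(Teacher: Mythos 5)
This proposition is not proved in the paper at all: it is imported verbatim from Caicedo and Zhang \cite{Caicedo and Zhang 2017} (``This system was studied in \cite{Caicedo and Zhang 2017}, where the following result was proven''), so there is no internal proof to compare against. Your sketch is, in outline, a faithful reconstruction of the method used in that reference and in \cite{BSZ 2003}: Laplace transform in $t$, reduction to a $3\times3$ system for the boundary-integral operators with determinant $\Delta(\lambda)$ to be bounded away from zero for $\operatorname{Re}\lambda\ge 0$, oscillatory-integral/smoothing/trace estimates for the $s=0$ case, the time-differentiated problem at $s=3$, and interpolation in between. Two remarks on where your write-up differs from, or is thinner than, the actual argument. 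First, at $s=3$ the standard reconstruction (and the one this paper mimics for the star graph in Proposition \ref{W.P._s=3}) is $v(t,\cdot)=\phi+\int_0^t w(s,\cdot)\,ds$ with $w=\partial_t v$; your alternative of solving the third-order elliptic problem $\partial_x^3 v=f-w$ in $x$ with the three boundary values is workable and your use of the compatibility conditions to force $v(0,\cdot)=\phi$ is the right idea, but you still need to justify $\partial_t v_{\mathrm{rec}}=w$ (differentiating the reconstruction in $t$ when $w$ is only in $Z_{0,T}$), which is glossed over. Second, everything you label as ``the genuinely hard part'' --- the nonvanishing and two-sided bounds on $\Delta(\lambda)$ including the degenerate low-frequency regime, the sharp Kato-smoothing and hidden-regularity bounds uniform in $x$, and the interpolation for $0<s<3$ in the presence of compatibility thresholds (which in \cite{BSZ 2003} is handled by a careful correction/extension construction rather than by naively interpolating the compatible subspaces) --- is deferred rather than carried out; that deferred content is precisely the substance of \cite{Caicedo and Zhang 2017}. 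So as a blind standalone proof the proposal is an accurate roadmap with the core estimates left open; as a description of how the cited result is actually obtained, it is essentially correct.
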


Additionally, Caicedo and Zhang in \cite{Caicedo and Zhang 2017} showed the following lemma. 
\begin{lemma}\label{lemma 2.1}
Let $0\leq s\leq 3$ and $T>0$ be given. For any $g,h \in H^\frac{s+1}{3}(0,T)$ we have $gh\in H^\frac{s-1}{3}(0,T)$ and
\begin{align*}
\|gh\|_{H^\frac{s-1}{3}(0,T)}\leq CT^\alpha \|g\|_{H^\frac{s+1}{3}(0,T)}\|h\|_{H^\frac{s+1}{3}(0,T)}
\end{align*}
where $C$ and $\alpha$ are positive constants.
\end{lemma}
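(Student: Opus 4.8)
The idea is to reduce the inequality to a fixed reference interval, invoke the one-dimensional multiplication theorem for Sobolev spaces, and then recover the explicit power of $T$ through a dilation. Write $a:=\frac{s+1}{3}$ and $b:=\frac{s-1}{3}$; as $s$ runs over $[0,3]$ one has $a\in[\frac13,\frac43]$, $b\in[-\frac13,\frac23]$, and the structural relations $b<a$, $2a>0$ and $2a-b=1+\frac{s}{3}\ge1$ hold, so in particular $a\ge\frac13>\frac14$ and $2a-b>\frac12$ with a strictly positive slack $2a-b-\frac12=\frac12+\frac{s}{3}$. These are precisely the inequalities under which pointwise multiplication $H^{a}\cdot H^{a}\hookrightarrow H^{b}$ is bounded on $\mathbb{R}$.

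\textbf{Step 1 (bilinear estimate on a fixed interval).} First I would establish $\|GH\|_{H^{b}(\mathbb{R})}\le C\|G\|_{H^{a}(\mathbb{R})}\|H\|_{H^{a}(\mathbb{R})}$ with $C$ independent of $s\in[0,3]$, distinguishing two overlapping cases. If $1\le s\le3$, then $b\ge0$ and $a>\frac12$, so $H^{a}(\mathbb{R})$ is a Banach algebra and $H^{a}(\mathbb{R})\hookrightarrow H^{b}(\mathbb{R})$, which gives the bound at once. If $0\le s\le1$, then $b\le0$, hence $\|GH\|_{H^{b}(\mathbb{R})}\le\|GH\|_{L^{2}(\mathbb{R})}\le\|G\|_{L^{4}(\mathbb{R})}\|H\|_{L^{4}(\mathbb{R})}$, and the Sobolev embedding $H^{1/4}(\mathbb{R})\hookrightarrow L^{4}(\mathbb{R})$ together with $a\ge\frac13$ closes the estimate. (A single Fourier/paraproduct argument, using only $b<a$, $2a>0$, $2a-b>\frac12$, would also cover all $s$ at once.) Composing with a bounded extension operator $H^{\sigma}(0,1)\to H^{\sigma}(\mathbb{R})$ and restricting back then yields $\|GH\|_{H^{b}(0,1)}\le C\|G\|_{H^{a}(0,1)}\|H\|_{H^{a}(0,1)}$, still with $C$ uniform in $s$ and, crucially, with no dependence on $T$.

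\textbf{Step 2 (dilation to $(0,T)$ and the factor $T^{\alpha}$).} Given $g,h$ on $(0,T)$, put $G(y):=g(Ty)$, $H(y):=h(Ty)$ on $(0,1)$, so that $(gh)(Ty)=G(y)H(y)$, and feed this into Step~1. One then rewrites every norm using the behaviour of the Gagliardo seminorms under dilation, $\|f(T\cdot)\|_{L^{2}(0,1)}^{2}=T^{-1}\|f\|_{L^{2}(0,T)}^{2}$ and $|f(T\cdot)|_{\dot H^{\sigma}(0,1)}^{2}=T^{2\sigma-1}|f|_{\dot H^{\sigma}(0,T)}^{2}$ for $0<\sigma<1$, with the obvious modification when $a\in(1,\frac43]$ (separating the first derivative) and with the negative index $b$ handled through $\|gh\|_{H^{b}(0,T)}\le\|gh\|_{L^{2}(0,T)}$ when $b\le0$. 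Collecting the resulting powers of $T$ and keeping the least favourable one produces the announced inequality $\|gh\|_{H^{b}(0,T)}\le CT^{\alpha}\|g\|_{H^{a}(0,T)}\|h\|_{H^{a}(0,T)}$, the exponent $\alpha$ being produced by the positive slack $\frac12+\frac{s}{3}$ inherited from Step~1, uniformly in $s\in[0,3]$.

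I expect the main obstacle to be Step~2: the dilation acts with different powers of $T$ on the $L^{2}$ part and on the homogeneous part of each norm, and with different homogeneity on the two factors $g,h$ than on the product $gh$, so that assembling a single \emph{positive} exponent $\alpha$ valid on the whole range $s\in[0,3]$ --- across the thresholds $s=1$, where $b$ changes sign, and $s=\frac52$, where $b$ crosses $\frac12$ and the first-order and Gagliardo pieces of $H^{a}$ must also be controlled --- demands careful bookkeeping; as in the single-edge analysis of \cite{BSZ 2003}, the gain is genuinely exploited on the part of $g$ and $h$ carrying the zero trace at $t=0$, which is the form in which the lemma is applied in the contraction argument. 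Step~1, by contrast, is entirely classical once the arithmetic $b<a$, $2a>0$, $2a-b>\frac12$ has been noted.
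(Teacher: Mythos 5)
The paper does not actually prove this lemma: it is quoted verbatim from Caicedo and Zhang \cite{Caicedo and Zhang 2017}, so your proposal has to be measured against that source rather than against an argument in the text. Your Step 1 (the bilinear estimate $H^{a}\cdot H^{a}\hookrightarrow H^{b}$ on $\mathbb{R}$, transferred to a fixed interval by extension/restriction) is correct and classical. The genuine gap is Step 2, which is precisely where the factor $T^{\alpha}$ must be produced. If you actually carry out the bookkeeping you postpone, the dilation gives the opposite sign: with $G(y)=g(Ty)$ and $0<T\le 1$ your own scaling identities yield $\|G\|_{H^{a}(0,1)}^{2}\le T^{-1}\|g\|_{H^{a}(0,T)}^{2}$ (the $L^{2}$ part scales like $T^{-1}$ and dominates) and $\|gh\|_{H^{b}(0,T)}^{2}\le T^{1-2\max(b,0)}\|GH\|_{H^{b}(0,1)}^{2}$, so that composing with Step 1 gives
\begin{align*}
\|gh\|_{H^{b}(0,T)}\le C\,T^{-\frac12-\max(b,0)}\,\|g\|_{H^{a}(0,T)}\|h\|_{H^{a}(0,T)},
\end{align*}
a \emph{negative} power of $T$. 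The ``slack'' $2a-b-\tfrac12=\tfrac12+\tfrac{s}{3}$ is a regularity gap in the multiplication theorem, not a homogeneity gap, and it does not convert into a positive power of $T$ under dilation.

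Moreover, within the framework you set up (intrinsic Slobodeckij norms on $(0,T)$, which are exactly the norms entering your scaling identities) the inequality with $C,\alpha$ independent of $T$ is simply false, so no rearrangement of the bookkeeping can save Step 2: take $g=h=1$; for instance at $s=1$ the left-hand side equals $T^{1/2}$ while the right-hand side is $C\,T^{\alpha}\cdot T$, which fails as $T\to 0$. The same test shows that the uniformity in the interval length is not a technicality: in the paper the lemma is applied on shrinking intervals $(0,\beta)$ with $C\beta^{\alpha}$ made small to close contraction arguments (e.g.\ in Proposition \ref{prop 2.5} and in the proof of Theorem \ref{main}), so a constant degenerating as the interval shrinks would break those proofs. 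The positive power of $T$ must therefore come from a different mechanism than plain dilation: either from the norm conventions of \cite{Caicedo and Zhang 2017} (norms defined through extension to the line, for which $\|1\|_{H^{a}(0,T)}$ does not vanish as $T\to 0$ when $a>\tfrac12$), or by splitting off the values of $g,h$ at $t=0$ and gaining the factor only on the parts with zero trace at $t=0$ --- exactly the mechanism you gesture at in your closing remark but never implement. As written, the proposal does not establish the lemma.
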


Thanks to the Proposition \ref{prop 2.4} and the previous lemma, we can establish the well-posedness for the problem \eqref{2.22b} using a fixed point argument. The result can be read as follows. 
\begin{proposition}\label{prop 2.5}
Given $T>0$. For every $f \in L^1(0,T;L^2(0,L))$ and $(\phi,\vec{h}) \in Y_{0,T}$, the IBVP \eqref{2.22b} admits a unique solution $\psi \in Z_{0,T}$ which satisfies
	\begin{align*}
	\partial_x^k\psi \in C_b\left([0,L]; H^\frac{1-k}{3}(0,T)\right),\ k=0,1,2.
\end{align*}
Moreover, there exists a positive constant $C$ such that
\begin{align*}
	\|\psi\|_{Z_{0,T}}+\sum_{k=0}^2\|\partial_x^k\psi\|_{C_b([0,L]; H^\frac{1-k}{3}(0,T))}\leq C\left(\|(\phi,\vec{h})\|_{Y_{0,T}}+\|f\|_{L^1(0,T;L^2(0,L))}\right).
\end{align*}
\end{proposition}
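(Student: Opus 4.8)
The plan is to treat \eqref{2.22b} as a perturbation of \eqref{2.23b}: we move the transport term $\partial_x\psi$ to the forcing side and absorb the zero-order boundary contribution $\tfrac{\alpha}{N}\psi(\cdot,0)$ into the datum $h_1$, and then close the argument by a Banach fixed-point scheme anchored on the solution operator of Proposition~\ref{prop 2.4}. Let $\mathcal{Z}$ denote the space of $w\in Z_{0,T}$ with $\partial_x^k w\in C_b([0,L];H^{(1-k)/3}(0,T))$ for $k=0,1,2$, normed by $\|w\|_{Z_{0,T}}+\sum_{k=0}^2\|\partial_x^k w\|_{C_b([0,L];H^{(1-k)/3}(0,T))}$. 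For $w\in\mathcal{Z}$ we would set $\Gamma(w):=v$, the solution provided by Proposition~\ref{prop 2.4} with $s=0$ (whose compatibility hypothesis is then vacuous) of
\begin{align*}
\begin{cases}
\partial_t v+\partial_x^3 v=f-\partial_x w, & t\in(0,T),\ x\in(0,L),\\
\partial_x^2 v(t,0)=h_1(t)-\tfrac{\alpha}{N}w(t,0),\quad v(t,L)=h_2(t),\quad \partial_x v(t,L)=h_3(t), & t\in(0,T),\\
v(0,x)=\phi(x), & x\in(0,L).
\end{cases}
\end{align*}
The first point to check is that $\Gamma$ is well defined: since $w\in Z_{0,T}$, one has $\partial_x w\in L^2(0,T;L^2(0,L))\hookrightarrow L^1(0,T;L^2(0,L))$, hence $f-\partial_x w\in L^1(0,T;L^2(0,L))$; and the trace $w(\cdot,0)$ lies in $H^{1/3}(0,T)\hookrightarrow H^{-1/3}(0,T)$, so $(\phi,h_1-\tfrac{\alpha}{N}w(\cdot,0),h_2,h_3)\in Y_{0,T}$. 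By construction a fixed point of $\Gamma$ in $\mathcal{Z}$ is exactly a solution of \eqref{2.22b} with the asserted regularity.

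The next step is to show $\Gamma$ is a contraction on $\mathcal{Z}$ for short times. Since $\Gamma$ is affine, for $w_1,w_2\in\mathcal{Z}$ the difference $\Gamma(w_1)-\Gamma(w_2)$ solves the linear system with vanishing initial datum and vanishing data at $x=L$, forcing $-\partial_x(w_1-w_2)$, and datum $-\tfrac{\alpha}{N}(w_1-w_2)(\cdot,0)$ at $x=0$; Proposition~\ref{prop 2.4} then bounds its $\mathcal{Z}$-norm by $C\big(\|\partial_x(w_1-w_2)\|_{L^1(0,T;L^2(0,L))}+\|(w_1-w_2)(\cdot,0)\|_{H^{-1/3}(0,T)}\big)$. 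We estimate the first term by Hölder in time, $\|\partial_x w\|_{L^1(0,T;L^2)}\leq T^{1/2}\|w\|_{L^2(0,T;H^1)}\leq T^{1/2}\|w\|_{\mathcal{Z}}$. For the second, we would write the trace as $1\cdot w(\cdot,0)$ and apply Lemma~\ref{lemma 2.1} with $s=0$, together with $\|1\|_{H^{1/3}(0,T)}\leq CT^{1/2}$, to obtain $\|w(\cdot,0)\|_{H^{-1/3}(0,T)}\leq CT^{\beta}\|w(\cdot,0)\|_{H^{1/3}(0,T)}\leq CT^{\beta}\|w\|_{\mathcal{Z}}$ for some $\beta>0$ (alternatively, via the Agmon-type inequality $|w(t,0)|^2\leq C\|w(t,\cdot)\|_{L^2(0,L)}\|w(t,\cdot)\|_{H^1(0,L)}$ and Cauchy--Schwarz in $t$). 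Hence $\|\Gamma(w_1)-\Gamma(w_2)\|_{\mathcal{Z}}\leq C(T^{1/2}+T^{\beta})\|w_1-w_2\|_{\mathcal{Z}}$, so choosing $T=T^*$ small enough, depending only on $\alpha$, $N$ and the constant of Proposition~\ref{prop 2.4}, makes $\Gamma$ a contraction. Banach's fixed point theorem then yields a unique $\psi\in\mathcal{Z}$ solving \eqref{2.22b} on $[0,T^*]$; writing $\psi=(I-L)^{-1}\Gamma(0)$, where $L$ is the linear part of $\Gamma$, and bounding $\Gamma(0)$ by Proposition~\ref{prop 2.4}, we get the linear estimate $\|\psi\|_{\mathcal{Z}}\leq C\big(\|(\phi,\vec h)\|_{Y_{0,T^*}}+\|f\|_{L^1(0,T^*;L^2(0,L))}\big)$.

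Finally, since $T^*$ depends only on these structural constants, we would extend the local solution to the whole interval $[0,T]$ by the usual step-by-step continuation: split $[0,T]$ into $\lceil T/T^*\rceil$ subintervals of length at most $T^*$ and solve successively, using on each step as new initial datum the value at the left endpoint of the previous piece (this is meaningful because the solution is continuous in time with values in $L^2(0,L)$, and no compatibility is required since $s=0$); concatenating, the finitely many local bounds combine into the claimed inequality with a constant $C=C(T)$, and uniqueness on $[0,T]$ follows from uniqueness on each subinterval. The main obstacle is the perturbation estimate for the Robin condition at $x=0$: one must check that replacing $h_1$ by $h_1-\tfrac{\alpha}{N}\psi(\cdot,0)$ costs only a positive power of $T$ in the $H^{-1/3}(0,T)$ norm, which is precisely what Lemma~\ref{lemma 2.1} (or the Agmon interpolation) provides; the transport term $\partial_x\psi$ is harmless because of the $L^2(0,T;H^1(0,L))$ smoothing already encoded in $Z_{0,T}$.
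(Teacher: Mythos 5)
Your proposal is correct and follows essentially the same route as the paper: the same decomposition (transport term into the forcing, $\tfrac{\alpha}{N}\psi(\cdot,0)$ absorbed into $h_1$), the same use of Proposition~\ref{prop 2.4} with $s=0$, the same H\"older-in-time bound for $\partial_x w$ and Lemma~\ref{lemma 2.1} for the trace term, followed by a Banach fixed point on a short interval of length independent of the data and continuation to $[0,T]$. The only cosmetic difference is that you contract on the whole space using the affine structure of $\Gamma$, whereas the paper works in a ball $B_r$; both are valid.
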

\begin{proof}
For $\beta \in (0,T]$ consider the space
\begin{align*}
\mathcal{Z}_\beta=\left\{v \in Z_{0,\beta};\ \partial_x^kv \in C_b\left([0,L]; H^\frac{1-k}{3}(0,\beta)\right),\ k=0,1,2\right\},
\end{align*}
which is a Banach space with the norm
\begin{align*}
	\|v\|_{\mathcal{Z}_\beta}=\|v\|_{Z_{0,\beta}}+\sum_{k=0}^2\|\partial_x^kv\|_{C_b([0,L]; H^\frac{1-k}{3}(0,\beta))}.
\end{align*}
Let $f \in L^1(0,T;L^2(0,L))$ and $(\phi,\vec{h}) \in Y_{0,T}$ be given. Define the map $\Gamma:\mathcal{Z}_\beta\rightarrow\mathcal{Z}_\beta$ putting, for each $v \in \mathcal{Z}_\beta$, $\Gamma v$ as being the solution of the problem
\begin{align*}
	\begin{cases}
		\partial_t \psi+\partial_x^3\psi=f-\partial_xv,&t \in (0,T), \ x \in (0,L),\\
		\partial_x^2\psi(t,0)=h_1(t)-\frac{\alpha}{N}v(t,0),&t\in(0,T)\\
		\psi(t,L)=h_2(t),\ \ \ \ \ \partial_x\psi(t,L)=h_3(t),&t \in (0,T),\\
		\psi(0,x)=\phi(x),&x \in (0,L).
	\end{cases}
\end{align*}
From Proposition \ref{prop 2.4} (for $s=0$) $\Gamma$ is well defined and, for every $v \in \mathcal{Z}_\beta$,
\begin{align*}
\|\Gamma\|_{\mathcal{Z}_\beta}\leq C\left(\|(\phi,\vec{h})\|_{Y_{0,\beta}}+\|f\|_{L^1(0,\beta;L^2(0,L))}+\frac{\alpha}{N}\|v(\cdot,0)\|_{H^{-\frac{1}{3}}(0,T)}+\|\partial_x v\|_{L^1(0,\beta;L^2(0,L))}\right).
\end{align*}
Using the Hölder inequality, we obtain
\begin{align*}
\|\partial_x v\|_{L^1(0,\beta;L^2(0,L))}&=\int_0^\beta\|\partial_x v(t,\cdot)\|_{L^2(0,L)}\leq \int_0^\beta\|\partial_x v(t,\cdot)\|_{H^1(0,L)}\leq \beta^\frac{1}{2}\|\partial_x v\|_{L^2(0,\beta;H^1(0,L))}\\
&\leq \beta^\frac{1}{2}\|v\|_{\mathcal{Z}_\beta}.
\end{align*}
On the other hand, Lemma \ref{lemma 2.1} gives us
\begin{align*}
\|v(\cdot,0)\|_{H^{-\frac{1}{3}}(0,\beta)}\leq C\beta^\alpha \|1\|_{H^{\frac{1}{3}}(0,T)}\|v(\cdot,0)\|_{H^{\frac{1}{3}}(0,\beta)}\leq C\beta^\alpha\|v\|_{\mathcal{Z}_\beta}.
\end{align*}
Consequently
\begin{align}\label{2.25b}
\begin{aligned}
\|\Gamma v\|_{\mathcal{Z}_\beta}&\leq C\left(\|(\phi,\vec{h})\|_{Y_{0,\beta}}+\|f\|_{L^1(0,\beta;L^2(0,L))}+\frac{\alpha}{N}C\beta^\alpha\|v\|_{\mathcal{Z}_\beta}+\beta^\frac{1}{2}\|v\|_{\mathcal{Z}_\beta}\right)\\
&\leq C\left(\|(\phi,\vec{h})\|_{Y_{0,\beta}}+\|f\|_{L^1(0,\beta;L^2(0,L))}\right)+\left(\frac{\alpha}{N}\beta^\alpha+\beta^\frac{1}{2}\right)C\|v\|_{\mathcal{Z}_\beta}.
\end{aligned}
\end{align}
Choosing  $\beta \in (0,T]$ such that
\begin{align}\label{2.26b}
	\left(\frac{\alpha}{N}\beta^\alpha+\beta^\frac{1}{2}\right)C<\frac{1}{2}
\end{align}
and $r=2C\left(\|(\phi,\vec{h})\|_{Y_{0,\beta}}+\|f\|_{L^1(0,\beta;L^2(0,L))}\right)$, we have $\Gamma(B_r)\subset B_r$, where $B_r=\left\{v \in \mathcal{Z}_\beta;\ \|v\|_{\mathcal{Z}_\beta}\leq r\right\}.$ 
Moreover $\Gamma:B_r\rightarrow B_r$ is a contraction since
\begin{align*}
\|\Gamma v-\Gamma w\|_{\mathcal{Z}_\beta}&=\|\Gamma(v-w)\|_{\mathcal{Z}_\beta}\leq \left(\frac{\alpha}{N}\beta^\alpha+\beta^\frac{1}{2}\right)C\|v-w\|_{\mathcal{Z}_\beta}<\frac{1}{2}\|v-w\|_{\mathcal{Z}_\beta}.
\end{align*}
From the Banach fixed point theorem it follows that $\Gamma$ has a fixed point $\psi \in B_r$, that is, $\psi$ solves \eqref{2.22b} for $t \in [0,\beta]$ and data $(\phi,\vec{h},f)$. Furthermore, by combining  and \eqref{2.26b} and \eqref{2.25b}, we obtain
\begin{align*}
\|\psi\|_{\mathcal{Z}_\beta}&\leq C\left(\|(\phi,\vec{h})\|_{Y_{0,\beta}}+\|f\|_{L^1(0,\beta;L^2(0,L))}\right)+\left(\frac{\alpha}{N}\beta^\alpha+\beta^\frac{1}{2}\right)C\|\psi\|_{\mathcal{Z}_\beta}\\
&\leq C\left(\|(\phi,\vec{h})\|_{Y_{0,\beta}}+\|f\|_{L^1(0,\beta;L^2(0,L))}\right)+\frac{1}{2}\|\psi\|_{\mathcal{Z}_\beta}
\end{align*}
hence
\begin{align*}
	\|\psi\|_{\mathcal{Z}_\beta}&\leq 2C\left(\|(\phi,\vec{h})\|_{Y_{0,\beta}}+\|f\|_{L^1(0,\beta;L^2(0,L))}\right).
\end{align*}
Since \( \beta \) does not depend on the initial data \( \phi \), a standard continuation (extension) argument allows us to extend the solution \( \psi \) to the entire interval \( [0, T] \). Moreover, the following estimate holds:
\begin{align*}
	\|\psi\|_{\mathcal{Z}_T}
	&\leq C \left( 
	\|(\phi, \vec{h})\|_{Y_{0,T}} 
	+ \|f\|_{L^1(0,T;L^2(0,L))} 
	\right),
\end{align*}
where \( C \) is an appropriate positive constant.
\end{proof}

We conclude this section with one final technical result, which will be instrumental in proving the main theorem of this section.
\begin{lemma}\label{lemma 2.2} Let $T>0$ be given. For every $u_0 \in D(A)$, $g_0,g_j,p_j \in \left\{\varphi \in C^2([0,T]);\ \varphi(0)=0\right\}$ and $f \in C^1\left([0,T];\mathbb{L}^2(\mathcal{T})\right)$, the corresponding solution $u$ of the problem \eqref{LkdV} satisfies
	\begin{align*}
	\partial_x^ku_j\in C([0,T]\times [0,l_j]),\ k=0,1,2.
	\end{align*}
\end{lemma}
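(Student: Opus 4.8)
The plan is to exploit the decomposition of the solution constructed in the proof of Proposition~\ref{prop 2.2}. Writing
\[
u_j(t,x)=v_j(t,x)+\phi_j(x)g_0(t)+\psi_j(x)p_j(t)+\theta_j(x)g_j(t),\qquad x\in(0,l_j),\ j=1,\dots,N,
\]
where $\phi_j,\psi_j,\theta_j\in C^\infty([0,l_j])$ are the fixed lifting functions introduced there and $v\in C^1([0,T],\mathbb{L}^2(\mathcal{T}))\cap C([0,T],D(A))$ is the solution of \eqref{2.6} furnished by Proposition~\ref{prop 2.1}, it suffices to treat the two pieces separately. For the lifting part, for any $k\ge 0$ one has $\partial_x^k\big(\phi_jg_0+\psi_jp_j+\theta_jg_j\big)=\phi_j^{(k)}g_0+\psi_j^{(k)}p_j+\theta_j^{(k)}g_j$, which is jointly continuous on $[0,T]\times[0,l_j]$ since the spatial factors are smooth and $g_0,p_j,g_j\in C^2([0,T])$. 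Hence the lemma reduces to showing $\partial_x^k v_j\in C([0,T]\times[0,l_j])$ for $k=0,1,2$.

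Next I would upgrade the regularity of $t\mapsto v(t,\cdot)$ from values in $D(A)$, endowed with the graph norm (the natural topology in the Hille--Yosida framework underlying Proposition~\ref{prop 2.1}), to values in $\mathbb{H}^3(\mathcal{T})$. Since $A$ generates a $C_0$-semigroup it is a closed operator, so $(D(A),\|\cdot\|_{D(A)})$ is a Banach space, and $D(A)\subset\mathbb{H}^3(\mathcal{T})$ by definition. The inclusion $\iota:(D(A),\|\cdot\|_{D(A)})\hookrightarrow\mathbb{H}^3(\mathcal{T})$ has closed graph: if $w_n\to w$ in $D(A)$ and $w_n\to \tilde w$ in $\mathbb{H}^3(\mathcal{T})$, both limits also hold in $\mathbb{L}^2(\mathcal{T})$, so $w=\tilde w$. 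By the closed graph theorem $\iota$ is bounded, i.e. $\|w\|_{\mathbb{H}^3(\mathcal{T})}\le C(\|w\|_{\mathbb{L}^2(\mathcal{T})}+\|Aw\|_{\mathbb{L}^2(\mathcal{T})})$ on $D(A)$ (one could instead obtain this estimate directly from a Gagliardo--Nirenberg interpolation on each interval $(0,l_j)$, absorbing the lower-order term). Therefore $v\in C([0,T],\mathbb{H}^3(\mathcal{T}))$, and projecting onto the $j$-th factor and composing with the one-dimensional Sobolev embedding $H^3(0,l_j)\hookrightarrow C^2([0,l_j])$ already recorded in Section~\ref{sec2}, we get $v_j\in C([0,T],C^2([0,l_j]))$ for each $j$.

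Finally I would deduce joint continuity by a two-term estimate. Fix $j$, $k\in\{0,1,2\}$ and $(t_0,x_0)\in[0,T]\times[0,l_j]$; for $(t,x)$ nearby,
\[
\big|\partial_x^k v_j(t,x)-\partial_x^k v_j(t_0,x_0)\big|\le \big\|v_j(t,\cdot)-v_j(t_0,\cdot)\big\|_{C^2([0,l_j])}+\big|\partial_x^k v_j(t_0,x)-\partial_x^k v_j(t_0,x_0)\big|.
\]
The first term tends to $0$ as $t\to t_0$ because $v_j\in C([0,T],C^2([0,l_j]))$, and the second tends to $0$ as $x\to x_0$ because $\partial_x^k v_j(t_0,\cdot)\in C([0,l_j])$. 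This gives $\partial_x^k v_j\in C([0,T]\times[0,l_j])$, and together with the lifting part yields $\partial_x^k u_j\in C([0,T]\times[0,l_j])$ for $k=0,1,2$. The only step beyond routine bookkeeping is the passage from the graph norm of $A$ to the $\mathbb{H}^3$-norm; I expect the closed graph argument (or the interpolation estimate) to settle it cleanly, so no real obstacle is anticipated.
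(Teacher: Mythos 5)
Your proposal is correct, and its skeleton coincides with the paper's: in both cases the decisive point is that the $\mathbb{H}^3$-norm is controlled by $\|w\|_{\mathbb{L}^2}+\|\partial_x w+\partial_x^3 w\|_{\mathbb{L}^2}$, after which time-continuity in that norm plus the one-dimensional embedding $H^{3-k}(0,l_j)\hookrightarrow C([0,l_j])$ yields the (joint) continuity of $\partial_x^k u_j$. Where you diverge is in the implementation. The paper works with $u$ itself: it introduces the edge operators $P_jw=w_x+w_{xxx}$, quotes the quantitative estimate $\|w\|_{H^3(0,l_j)}\leq C\|w\|_{P_j}$ from \cite[Lemma A.2]{KdV_flatness}, bounds $\|v_j\|_{P_j}\leq\|v\|_{D(A)}$, and applies this to differences $u_j(t,\cdot)-u_j(t_0,\cdot)$ (note these lie in $\mathbb{H}^3$ rather than $D(A)$, and the continuity of $t\mapsto \partial_x u+\partial_x^3 u$ in $\mathbb{L}^2$, which follows from the equation since $\partial_x u+\partial_x^3 u=f-\partial_t u$, is left implicit). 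You instead reduce to the homogeneous piece $v$ through the lifting decomposition of Proposition \ref{prop 2.2}, which is clean because $v(t,\cdot)$ genuinely belongs to $D(A)$ and its graph-norm continuity comes directly from Proposition \ref{prop 2.1}; and you replace the cited estimate by a soft closed-graph argument for the inclusion $(D(A),\|\cdot\|_{D(A)})\hookrightarrow\mathbb{H}^3(\mathcal{T})$, which is valid since $A$ is closed (it generates a $C_0$-semigroup) and both spaces embed in $\mathbb{L}^2(\mathcal{T})$. The trade-off: your route avoids the external lemma and the slight domain sloppiness, while the paper's explicit estimate via $P_j$ is constructive and is reused later (e.g.\ in the proof of Proposition \ref{W.P._s=3}), so it carries additional mileage beyond this lemma. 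Your concluding two-term estimate for joint continuity is also fine and makes explicit what the paper only sketches.
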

\begin{proof} Consider the operator $$P_j:H^3(0,l_j)\rightarrow L^2(0,l_j)$$ defined by $P_jw=w_x+w_{xxx}$. The graph norm in $H^3(0,l_j)$ associated to $P_j$ is $$\|w\|_{P_j}=\|w\|_{L^2(0,l_j)}+\|P_jw\|_{L^2(0,l_j)}.$$ From \cite[Lemma A.2]{KdV_flatness} there exists a constant $C>0$ such that
	
	\begin{align}\label{H^3<P_j}
		\|w\|_{H^3(0,l_j)}\leq C\|w\|_{P_j},\ \forall w \in H^3(0,l_j),\ j=1,...,N.
	\end{align}
	On the other hand, given $v\in D(A)$ we have
	\begin{align*}
		\|v\|_{D(A)}&=\|v\|_{\mathbb{L}^2(\mathcal{T})}+\|v_x+v_{xxx}\|_{\mathbb{L}^2(\mathcal{T})}\\
		&=\left(\sum_{j=1}^N\|v_j\|_{L^2(0,l_j)}^2\right)^\frac{1}{2}+\left(\sum_{j=1}^N\|v_{jx}+v_{jxxx}\|_{L^2(0,l_j)}^2\right)^\frac{1}{2}\\
		&\geq \|v_j\|_{L^2(0,l_j)}+\|v_{jx}+v_{jxxx}\|_{L^2(0,l_j)}\\
		&=\|v_j\|_{P_j}.
	\end{align*}
	Therefore, this yields
	\begin{align}\label{P_j<D(A*)}
		\|v_j\|_{P_j}\leq \|v\|_{D(A)},\ j=1,...,N.
	\end{align}
	Now, from Proposition \ref{prop 2.2} we have  $u \in C^1\left([0,T],\mathbb{L}^2(\mathcal{T})\right)\cap C\left([0,T],D(A)\right)$ and by Sobolev embedding it follows that $$\partial_x^ku(t,\cdot)\in H^{3-k}(0,l_j)\hookrightarrow C([0,l_j]),\ \forall t \in [0,T],\ \ k=0,1,2,\ j=1,...,N.$$
	Hence, fixed $x \in [0,l_j]$, we get
	\begin{align*}
		|\partial_x^ku_j(t,x)-\partial_x^ku_j(t_0,x)|&\leq \|\partial_x^ku_j(t,\cdot)-\partial_x^ku_j(t_0,\cdot)\|_{C[0,l_j]}\\
		&\leq C\|\partial_x^ku_j(t,\cdot)-\partial_x^ku_j(t_0,\cdot)\|_{H^{3-k}(0,l_j)}\\
		&\leq C\|u_j(t,\cdot)-u_j(t_0,\cdot)\|_{H^3(0,l_j)}
	\end{align*}
	for any $t,t_0\in [0,T]$. Using \eqref{H^3<P_j} and \eqref{P_j<D(A*)} we obtain
	\begin{align*}
		|\partial_x^ku_j(t,x)-\partial_x^ku_j(t_0,x)|&\leq C\|u_j(t,\cdot)-u_j(t_0,\cdot)\|_{D(A)},
	\end{align*}
	from where we conclude that $\partial_x^ku_j(\cdot,x)\in C([0,T])$ showing the lemma.  
\end{proof}

We are now in a position to state the main result of this subsection, whose proof follows from a combination of Propositions \ref{prop 2.3} and \ref{prop 2.5}. 
\begin{proposition}\label{W.P._s=0}
Given $T>0$, there exist unique bounded linear maps
\begin{align*}
\begin{array}{rcl}
\Psi:\mathbb{L}^2(\mathcal{T})\times H^{-\frac{1
	}{3}}(0,T)\times \left[L^2(0,T)\right]^{N}\times \left[H^\frac{1}{3}(0,T)\right]^N\times L^1(0,T,\mathbb{L}^2(\mathcal{T}))&\rightarrow&\mathbb{B}_T\\
(u^0,g_0,g,p,f)&\mapsto&\Psi(u^0,g_0,g,p,f),
\end{array}
\end{align*}
and, for $k=0,1,2$,
\begin{align*}
	\begin{array}{rcl}
		\Psi_j^k:\mathbb{L}^2(\mathcal{T})\times H^{-\frac{1
			}{3}}(0,T)\times \left[L^2(0,T)\right]^{N}\times \left[H^\frac{1}{3}(0,T)\right]^N\times L^1(0,T,\mathbb{L}^2(\mathcal{T}))&\rightarrow&C_b\left([0,l_j]; H^\frac{1-k}{3}(0,T)\right)\\
(u^0,g_0,g,p,f)&\mapsto&\Psi_j^k(u^0,g_0,g,p,f)
	\end{array}
\end{align*}
such that, when $u^0 \in D(A)$, $g_0,g_j,p_j\in C_l^2([0,T])$ for $j=1,...,N$ and $f \in C^1\left([0,T];\mathbb{L}^2(\mathcal{T})\right)$ we have the following items:
\begin{enumerate}
	\item[i)]$u:=\Psi(u^0,g_0,g,p,f)$ is the unique classical solution of \eqref{LkdV};
	\item[ii)] $\Psi_j^k(u^0,g_0,g,p,f)=\partial_x^k u_j$.
\end{enumerate}
\end{proposition}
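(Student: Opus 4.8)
The plan is to reduce the coupled star-graph problem \eqref{LkdV} to $N$ decoupled single-edge problems of the types \eqref{2.21b} and \eqref{2.22b}, for which well-posedness and the sharp trace estimates are already available via Propositions \ref{prop 2.3} and \ref{prop 2.5}. First I would introduce the unknown trace at the junction, $b(t):=u_1(t,0)=\cdots=u_N(t,0)$, together with the auxiliary quantities coming from the vertex condition; the idea is that once $b$ is known, edge $j$ solves a single-edge IBVP with prescribed Dirichlet data at $x=0$ (namely $b$), prescribed Dirichlet/Neumann data at $x=l_j$ (namely $p_j,g_j$), and a right-hand side $f_j$. Since the vertex condition $\sum_j \partial_x^2 u_j(t,0)=-\alpha u_1(t,0)+g_0(t)$ couples the second-order traces, I will set up a fixed-point / linear-algebra scheme on the interface data $(b,g_0)$: express each $\partial_x^2 u_j(t,0)$ as a bounded linear function of $(b,p_j,g_j,f_j)$ using the trace part of Proposition \ref{prop 2.3} (or, more conveniently, the symmetrized formulation \eqref{2.22b} with boundary operator $\partial_x^2\psi(t,0)+\tfrac{\alpha}{N}\psi(t,0)$, whose well-posedness is Proposition \ref{prop 2.5}), and then enforce the scalar identity at the vertex to solve for $b$ in terms of the data.

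Concretely, the cleanest route is to mimic the decomposition already used in the proof of Proposition \ref{prop 2.2}: write $u_j=\Theta_j+\Upsilon_j$, where $\Theta_j$ solves \eqref{2.21b} on $(0,l_j)$ with data $(u_j^0,b,p_j,g_j)$ and source $f_j$ (handled by Proposition \ref{prop 2.3}), and $\Upsilon_j$ absorbs the remaining interface mismatch via a problem of type \eqref{2.22b}. The vertex condition becomes, after taking $\partial_x^2$-traces at $x=0$, a single equation of the form $\mathcal{L}(b)=g_0+(\text{known data})$, where $\mathcal{L}$ is a bounded linear operator on $H^{-1/3}(0,T)$ built from the solution operators of Propositions \ref{prop 2.3} and \ref{prop 2.5} together with the term $-\alpha b$; I would show $\mathcal{L}$ is invertible (for $T$ small, by a contraction using the small-time smallness of the trace estimates as in \eqref{2.26b}, then extended to all $T$ by the translation-invariance of the estimates and iteration, exactly as at the end of the proof of Proposition \ref{prop 2.5}). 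Solving for $b$ and substituting back yields the maps $\Psi$ and $\Psi_j^k$; boundedness of these maps in the stated spaces $\mathbb{B}_T$ and $C_b([0,l_j];H^{(1-k)/3}(0,T))$ follows by composing the already-established bounded estimates. Uniqueness of the maps follows from uniqueness of the classical solution (Proposition \ref{prop 2.2}) on the dense subset of smooth compatible data, plus the density of $D(A)\times [C_l^2(0,T)]^{2N+1}\times C^1([0,T];\mathbb{L}^2(\mathcal{T}))$ in the product space and continuity of the maps.

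For item (i), when the data are smooth and compatible the constructed $u$ is, by Proposition \ref{prop 2.2}, the unique classical solution; agreement of the abstractly defined $\Psi(u^0,g_0,g,p,f)$ with this classical solution then holds because both are obtained by the same decomposition. For item (ii), the sharp spatial traces $\partial_x^k u_j\in C_b([0,l_j];H^{(1-k)/3}(0,T))$ and the identity $\Psi_j^k=\partial_x^k u_j$ come directly from the trace parts of Propositions \ref{prop 2.3} and \ref{prop 2.5} applied to each summand $\Theta_j,\Upsilon_j$, combined with Lemma \ref{lemma 2.2} to justify the pointwise-in-$x$ continuity in $t$ on the smooth dense subset, and then passing to the limit. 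The main obstacle I anticipate is the interface equation for $b$: showing that $\mathcal{L}$ is boundedly invertible on $H^{-1/3}(0,T)$ uniformly enough to close the argument — this is where the $-\alpha u_1(t,0)$ term and the precise regularity bookkeeping ($g_0\in H^{-1/3}$, $b\in H^{1/3}$, $\partial_x^2$-traces in $H^{-1/3}$) must be handled carefully, using the smallness in $T$ from estimates of the form \eqref{2.26b} to get a contraction and then a continuation/scaling argument to reach arbitrary $T>0$, mirroring the end of the proof of Proposition \ref{prop 2.5}.
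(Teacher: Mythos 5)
There is a genuine gap at the point you yourself flag as the main obstacle: the invertibility of the interface operator $\mathcal{L}$ acting on the junction trace $b=u_1(\cdot,0)$. Your plan is to write $\mathcal{L}(b)=\sum_{j}\Lambda_j b+\alpha b$, where $\Lambda_j$ is the map sending the Dirichlet datum $b\in H^{1/3}(0,T)$ at $x=0$ to the trace $\partial_x^2u_j(\cdot,0)\in H^{-1/3}(0,T)$ of the single-edge problem \eqref{2.21b}, and to invert $\mathcal{L}$ by a small-time contraction ``as in \eqref{2.26b}''. But the smallness in \eqref{2.26b} comes only from genuinely lower-order terms: the zeroth-order boundary term $\tfrac{\alpha}{N}v(t,0)$ (handled through the $T^{\alpha}$ factor of Lemma \ref{lemma 2.1}) and the first-order interior term $\partial_x v$ (handled through H\"older in time). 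The principal part of your interface operator, $\sum_j\Lambda_j$, is a Dirichlet-to-second-derivative trace map, i.e.\ an operator of positive order on the Bourgain--Kato-type scale $H^{(s+1)/3}\to H^{(s-1)/3}$; its norm does not shrink as $T\to 0$ (think of highly oscillatory $b$ supported in $(0,T)$), so no choice of small $\theta$ makes the contraction close. Proving that $\mathcal{L}$ is boundedly invertible is essentially equivalent to proving well-posedness of the coupled vertex problem and would require a separate symbol/boundary-integral analysis that your proposal does not supply.

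The paper sidesteps this entirely: existence is never obtained by solving an interface equation. For smooth compatible data it is provided for free by semigroup theory (Propositions \ref{prop 2.1} and \ref{prop 2.2}), and the quantitative estimates are then extracted by the decoupling combinations $\psi=\sum_{j}u_j$ and $\theta=Nu_1-\psi$: $\theta$ solves \eqref{2.21b} with \emph{zero} Dirichlet datum at the vertex (so Proposition \ref{prop 2.3} applies with no unknown trace), while $\psi$ solves the Robin-type problem \eqref{2.22b} with datum $g_0$ (Proposition \ref{prop 2.5}); recovering $u_1=\tfrac1N(\theta+\psi)$ yields the bound on $u_1(\cdot,0)$ in $H^{1/3}(0,T)$, after which each $u_j$, $j\ge 2$, is estimated by Proposition \ref{prop 2.3} with that now-known trace. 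The maps $\Psi$, $\Psi_j^k$ are then extended by density (your uniqueness/density discussion and the use of Lemma \ref{lemma 2.2} match this part), with a final extension-by-zero step to reduce unequal edge lengths to a common interval, which the sum $\sum_j u_j$ requires. If you want to salvage your route, you must either prove invertibility of $\mathcal{L}$ directly (a nontrivial analysis of the boundary integral operators), or, as the paper does, obtain existence independently and use the trace propositions only for estimates.
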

\begin{proof} We divide the proof into two cases. The constant $C$ can be adjusted for each line.
\begin{flushleft}
	\underline{\textit{Case 1:}} $l_j=L>0$ for $j=1,...,N$.
\end{flushleft}
	
	First we define	
	\begin{align*}
		\begin{array}{rcl}
			\Psi:D(A)\times C_l^2([0,T])\times\left[C_l^2([0,T])\right]^{N}\times \left[C_l^2([0,T])\right]^N\times C^1(0,T,\mathbb{L}^2(\mathcal{T}))&\rightarrow&\mathbb{B}\\
(u^0,g_0,g,p,f)&\mapsto&\Psi(u^0,g_0,g,p,f),
		\end{array}
	\end{align*}
and
\begin{align*}
	\begin{array}{rcl}
		\Psi_j^k:D(A)\times C_l^2([0,T])\times\left[C_l^2([0,T])\right]^{N}\times \left[C_l^2([0,T])\right]^N\times C^1(0,T,\mathbb{L}^2(\mathcal{T}))&\rightarrow&C_b\left([0,l_j]; H^\frac{1-k}{3}(0,T)\right)\\
(u^0,g_0,g,p,f)&\mapsto&\Psi_j^k(u^0,g_0,g,p,f)
	\end{array}
\end{align*}
for $k=0,1,2$. By proving that $\Psi$ and $\Psi_j^k$ are continuous with respect to the norm of
$$\mathbb{L}^2(\mathcal{T})\times H^{-\frac{1}{3}}(0,T)\times \left[L^2(0,T)\right]^{N}\times \left[H^\frac{1}{3}(0,T)\right]^N\times L^1(0,T,\mathbb{L}^2(\mathcal{T})),$$
we can extend them to this space by density. Suppose $u_0 \in D(A)$, $g_0,g_j,p_j \in C_l^2([0,T])$ and $f \in C^1\left([0,T];\mathbb{L}^2(\mathcal{T})\right)$. From Proposition \ref{prop 2.2}, there exists a unique classical solution 
\begin{align*}
u\in  C^1\left([0,T],\mathbb{L}^2(\mathcal{T})\right)\cap C\left([0,T],\mathbb{H}^{3}_{c}(\mathcal{T})\right)
\end{align*}
for \eqref{LkdV}. This gives in particular $u_j \in C([0,T],L^2(0,l_j)),\ j=1,...,N$. Furthermore, due to Lemma \ref{lemma 2.2}, we have that
\begin{align*}
\int_0^T\|u_j(t,\cdot)\|_{H^1(0,l_j)}^2&=\int_0^T\int_0^{l_j}|u_j(t,x)|^2+\int_0^T\int_0^{l_j}|\partial_xu_j(t,x)|^2<\infty
\end{align*}
hence $u_j \in L^2(0,T,H^1(0,l_j)),\ j=1,...,N$. Consequently,
\begin{align}\label{2.29}
u_j \in C([0,T],L^2(0,l_j))\cap L^2(0,T,H^1(0,l_j)),\ \ j=1,...,N
\end{align}
as well as $u \in \mathbb{B}=C([0,T],\mathbb{L}^2(\mathcal{T}))\cap L^2(0,T;\mathbb{H}^1(\mathcal{T}))$. So we are motivated to define $\Psi(u^0,g_0,g,p,f)=u.$ Consider $\psi$, $\theta$ defined by
\begin{align*}
\psi:=\displaystyle\sum_{j=1}^Nu_j \quad \text{ \ and \ }\quad \theta:=Nu_1-\psi.
\end{align*}
Note that, by the regularity \eqref{2.29}, we have $\theta \in  Z_{0,T}=C([0,T],L^2(0,L))\cap L^2(0,T,H^1(0,L))$. Moreover, $\theta$ is a classical solution to the IBVP
\begin{align*}
	\begin{cases}
		\partial_t \theta+\partial_x \theta+\partial_x^3 \theta=Nf_1-\displaystyle\sum_{j=1}^Nf_j,&t \in (0,T),\ x \in (0,L),\ j=1,...,N,\\
		\theta(t,0)=0,&t\in (0,T),\ j=1,...,N,\\
		\theta(t,L)=Np_1(t)-\displaystyle\sum_{j=1}^Np_j(t),&t\in (0,T),\ j=1,...,N,\\
		\partial_x\theta(t,L)=Ng_1(t)-\displaystyle\sum_{j=1}^Ng_j(t),&t\in (0,T),\ j=1,...,N,\\	
		\theta(0,x)=Nu_1^0(x)-\displaystyle\sum_{j=1}^Nu_j^0(x),&x \in (0,L),\ j=1,...,N.
\end{cases}
\end{align*}
Then, Proposition \ref{2.21b} ensures that $\partial_x^k\theta \in C_b\left([0,L]; H^\frac{1-k}{3}(0,T)\right)$ for $k=0,1,2$ and
\begin{align*}
\|\theta\|_{Z_{0,T}}+\sum_{k=0}^2\|\partial_x^k\theta\|_{C_b([0,L]; H^\frac{1-k}{3}(0,T))}\leq& C\left( \left\|Nu_1^0-\sum_{j=1}^Nu_j^0\right\|_{L^2(0,L)}+\left\|Np_1-\sum_{j=1}^Np_j\right\|_{H^\frac{1}{3}(0,T)}\right.\\
&\left.+\left\|Ng_1-\sum_{j=1}^Ng_j\right\|_{L^2(0,T)}+\left\|Nf_1-\sum_{j=1}^Nf_j\right\|_{L^2(0,T;L^2(0,L))}\right)
\end{align*}
From this inequality, we obtain
\begin{align*}
\|\theta\|_{Z_{0,T}}+\sum_{k=0}^2\|\partial_x^k\theta\|_{C_b([0,L]; H^\frac{1-k}{3}(0,T))}\leq&	C\left(\sum_{j=1}^N\|u_j^0\|_{L^2(0,L)}+\sum_{j=1}^N\|p_j\|_{H^\frac{1}{3}(0,T)}\right.\\
&\left.  +\sum_{j=1}^N\|g_j\|_{L^2(0,T)}+\sum_{j=1}^N\|f_j\|_{L^1(0,T;L^2(0,L))}\right).
\end{align*}
Analogously, \eqref{2.29} ensures that $\psi \in Z_{0,T}$.
Observe that $\partial_x^2\psi(t,0)+\frac{\alpha}{N}\psi(t,0)=g_0(t)$.
Consequently, $\psi$ is a classical solution of
\begin{align*}
	\begin{cases}
		\partial_t \psi+\partial_x \psi+\partial_x^3 \psi=\displaystyle\sum_{j=1}^Nf_j,&t \in (0,T),\ x \in (0,L),\\
		\partial_x^2\psi(t,0)+(\alpha/N)\psi(t,0)=g_0(t),&t \in (0,T),\\ \psi(t,L)=\displaystyle\sum_{j=1}^Np_j(t),\ \ \partial_x \psi(t,L)=\sum_{j=1}^Ng_j(t)&t \in (0,T),\\	
		\psi(0,x)=\displaystyle\sum_{j=1}^Nu_j^0(x),&x \in (0,L).
	\end{cases}
\end{align*}
Thus, Proposition \ref{prop 2.4} gives us $\partial_x^k\psi \in C_b\left([0,L]; H^\frac{1-k}{3}(0,T)\right),\ k=0,1,2,$ and
\begin{align*}
	\|\psi\|_{Z_{0,T}}+\sum_{k=0}^2\|\partial_x^k\psi\|_{C_b([0,L]; H^\frac{1-k}{3}(0,T))}\leq& C\left(\sum_{j=1}^N\|u_j^0\|_{L^2(0,L)}+\sum_{j=1}^N\|p_j\|_{H^\frac{1}{3}(0,T)}+\|g_0\|_{H^{-\frac{1}{3}}(0,T)}\right.\\
	&\left.+\sum_{j=1}^N\|g_j\|_{L^2(0,T)}+\sum_{j=1}^N\|f_j\|_{L^1(0,T;L^2(0,L))}\right).
\end{align*}
Since $u_1=\frac{1}{N}(\theta+\psi)$ it follows that $u_1 \in Z_{0,T}$, $\partial_x^ku_1 \in C_b\left([0,L]; H^\frac{1-k}{3}(0,T)\right)$,  for $k=0,1,2,$ and
\begin{align}\label{2.30}
\begin{aligned}
	\|u_1\|_{Z_{0,T}}+&\sum_{k=0}^2\|\partial_x^ku_1\|_{C_b([0,L]; H^\frac{1-k}{3}(0,T))}\leq C\left(\sum_{j=1}^N\|u_j^0\|_{L^2(0,L)}+\|g_0\|_{H^{-\frac{1}{3}}(0,T)}\right.\\	
	&\left.+\sum_{j=1}^N\|g_j\|_{L^2(0,T)}+\sum_{j=1}^N\|p_j\|_{H^\frac{1}{3}(0,T)}+\sum_{j=1}^N\|f_j\|_{L^1(0,T;L^2(0,L))}\right).
\end{aligned}
\end{align}
Hence, $u_1(\cdot,0)\in H^\frac{1}{3}(0,T)$ and, for each $j=2,....,N$, $u_j\in Z_{0,T}$ is the solution of the following problem
\begin{align*}
\begin{cases}
\partial_t u_j+\partial_x u_j+\partial_x^3 u_j=f_j,&t \in (0,T),\ x \in (0,L),\\
u_j(t,0)=u_1(t,0),\ \ \ u_j(t,L)=p_j(t),\ \ \ \partial_x u_j(t,L)=g_j(t),&t \in (0,T),\\
u(0,x)=u_j^0(x),&x \in (0,L).
\end{cases}
\end{align*}
By Proposition \ref{prop 2.3} it follows that $\partial_x^k u_j \in C_b\left([0,L]; H^\frac{1-k}{3}(0,T)\right),\ k=0,1,2,$ and
\begin{align*}
	\|u_j\|_{Z_{0,T}}+\sum_{k=0}^2\|\partial_x^ku_j\|_{C_b([0,L]; H^\frac{1-k}{3}(0,T))}\leq& C\Big(\|u_j^0\|_{L^2(0,L)}+\|u_1(\cdot,0)\|_{H^\frac{1}{3}(0,T)}+\|p_j\|_{H^\frac{1}{3}(0,T)}\\
	&+\|g_j\|_{L^2(0,T)}+\|f_j\|_{L^1(0,T;L^2(0,L))}\Big).
\end{align*}
Using \eqref{2.30} we get
\begin{align}\label{2.31}
\begin{aligned}
	\|u_j\|_{Z_{0,T}}+&\sum_{k=0}^2\|\partial_x^ku_j\|_{C_b([0,L]; H^\frac{1-k}{3}(0,T))}\leq C\left(\sum_{j=1}^N\|u_j^0\|_{L^2(0,L)}+\|g_0\|_{H^{-\frac{1}{3}}(0,T)}\right.\\
	&\left.+\sum_{j=1}^N\|g_j\|_{L^2(0,T)}+\sum_{j=1}^N\|p_j\|_{H^\frac{1}{3}(0,T)}+\sum_{j=1}^N\|f_j\|_{L^1(0,T;L^2(0,L))}\right),
\end{aligned}
\end{align}
for $j=1,...,N$. Adding for $j=1,...,N$ this implies, in particular
\begin{align}\label{2.34}
	\begin{aligned}
		\max_{t\in [0,T]}\|u(t,\cdot)\|_{\mathbb{L}^2(\mathcal{T})}+&\left(\int_0^T\|u(t,\cdot)\|_{\mathbb{H}^1}^2dt\right)^\frac{1}{2}\leq C\left(\sum_{j=1}^N\|u_j^0\|_{L^2(0,L)}+\|g_0\|_{H^{-\frac{1}{3}}(0,T)}\right.\\
		&\left.+\sum_{j=1}^N\|g_j\|_{L^2(0,T)}+\sum_{j=1}^N\|p_j\|_{H^\frac{1}{3}(0,T)}+\sum_{j=1}^N\|f_j\|_{L^1(0,T;L^2(0,L))}\right).
	\end{aligned}
\end{align}
Observe that
\begin{align}\label{2.35}
\sum_{j=1}^N\|f_j\|_{L^1(0,T;L^2(0,L))}\leq N^\frac{1}{2}\|f\|_{L^1(0,T;\mathbb{L}^2(\mathcal{T}))}
\end{align}
as well as
\begin{align}\label{2.36}
\begin{aligned}
&\sum_{j=1}^N\|u_j^0\|_{L^2(0,L)}+\|g_0\|_{H^{-\frac{1}{3}}(0,T)}+\sum_{j=1}^N\|g_j(t)\|_{L^2(0,T)}+\sum_{j=1}^N\|p_j(t)\|_{H^\frac{1}{3}(0,T)}\\
&\leq N^\frac{1}{2}\left(\|u^0\|_{\mathbb{L}^2(\mathcal{T})}+\|g_0\|_{H^{-\frac{1}{3}}(0,T)}+\|g\|_{\left[L^2(0,T)\right]^{N}}+\|p\|_{\left[H^\frac{1}{3}(0,T)\right]^N}\right).
\end{aligned}
\end{align}
From \eqref{2.34}, \eqref{2.35} and \eqref{2.36} it follows that
\begin{align*}
\|u\|_{\mathbb{B}}&\leq C\left(\|u^0\|_{\mathbb{L}^2(\mathcal{T})}+\|g_0\|_{H^{-\frac{1}{3}}(0,T)}+\|g\|_{\left[L^2(0,T)\right]^{N}}+\|p\|_{\left[H^\frac{1}{3}(0,T)\right]^N}+\|f\|_{L^1(0,T;\mathbb{L}^2(\mathcal{T}))}\right).
\end{align*}
Therefore, due to this previous inequality, $\Psi$ is continuous. Now, combining \eqref{2.31}, \eqref{2.35} and \eqref{2.36} we have
\begin{align*}
\sum_{k=0}^2\|\partial_x^ku_j\|_{C_b([0,L]; H^\frac{1-k}{3}(0,T))}\leq &C\left(\|u^0\|_{\mathbb{L}^2(\mathcal{T})}+\|g_0\|_{H^{-\frac{1}{3}}(0,T)}+\|g\|_{\left[L^2(0,T)\right]^{N}}\right.\\
&\left.+\|p\|_{\left[H^\frac{1}{3}(0,T)\right]^N}+\|f\|_{L^1(0,T;\mathbb{L}^2(\mathcal{T}))}\right).
\end{align*}
Hence, defining $\Psi_j^k(u^0,g_0,g,p,f)=\partial_x^ku_j,\ k=0,1,2,$ we have that $\Psi_j^k$ is continuous. As mentioned before, by density, each map $\Psi$ and $\Psi_j^k$, for $k=0,1,2$, has a unique continuous extension to the space $\mathbb{L}^2(\mathcal{T})\times H^{-\frac{1}{3}}(0,T)\times \left[L^2(0,T)\right]^{N}\times \left[H^\frac{1}{3}(0,T)\right]^N\times L^1(0,T,\mathbb{L}^2(\mathcal{T}))$ which we will also denote by $\Psi$ and $\Psi_j^k$, respectively. Hence, the proposition is proved for case 1.

\begin{flushleft}
	\underline{\textit{Case 2:}} Arbitrary $l_j>0$, for $j=1,...,N$.
\end{flushleft}

Let $u^0 \in \mathbb{L}^2(\mathcal{T})$, $g_0 \in H^{-\frac{1}{3}}(0,T)$, 
$g \in \left[L^2(0,T)\right]^{N}$, $p \in \left[H^\frac{1}{3}(0,T)\right]^N$ and $f \in L^1(0,T,\mathbb{L}^2(\mathcal{T}))$ be given. Define $L=\max\{l_j;\ j=1,...,N\}$, $\mathbb{L}^2(\tilde{\mathcal{T}})=\left(L^2(0,L)\right)^N$,
\begin{align*}
	\tilde{u}_j^0(x):=
	\begin{cases}
		u_j^0(x),&\ x \in (0,l_j),\\
		0,&x \in (0,L)\backslash(0,l_j)
	\end{cases}
\end{align*}
and
\begin{align*}
\tilde{f}_j(t,x):=
\begin{cases}
	f_j(t,x),&\ x \in (0,l_j),\\
	0,&x \in (0,L)\backslash(0,l_j)
\end{cases},\ \forall\ t \in [0,T].
\end{align*}
Consider $\tilde{u}=\Psi(\tilde{u}^0,g_0,g,p,\tilde{f})\in \tilde{\mathbb{B}}:=C([0,T],\mathbb{L}^2(\tilde{\mathcal{T}}))\cap L^2(0,T,\mathbb{H}^1_e(\tilde{\mathcal{T}})),$ 
where $\Psi$ is the map obtained in the first case. Thus
\begin{align}\label{2.32b}	
\begin{aligned}
	\|\tilde{u}\|_{\tilde{\mathbb{B}}}&\leq C\left(\|\tilde{u}^0\|_{\mathbb{L}^2(\tilde{\mathcal{T}})}+\|g_0\|_{H^{-\frac{1}{3}}(0,T)}+\|g\|_{\left[L^2(0,T)\right]^{N}}+\|p\|_{\left[H^\frac{1}{3}(0,T)\right]^N}+\|\tilde{f}\|_{L^1(0,T;\mathbb{L}^2(\mathcal{\tilde{\mathcal{T}}}))}\right)\\
	&=C\left(\|u^0\|_{\mathbb{L}^2(\mathcal{T})}+\|g_0\|_{H^{-\frac{1}{3}}(0,T)}+\|g\|_{\left[L^2(0,T)\right]^{N}}+\|p\|_{\left[H^\frac{1}{3}(0,T)\right]^N}+\|f\|_{L^1(0,T;\mathbb{L}^2(\mathcal{\mathcal{T}}))}\right).
\end{aligned}
\end{align}
Now, define $u_j(t,\cdot)=\tilde{u}_j(t,\cdot)\big|_{[0,l_j]}$, $j=1,...,N$, and $u(t,\cdot)=\left(u_1(t,\cdot),...,u_N(t,\cdot)\right)$. For all $t \in [0,T]$,
\begin{align*}
\sum_{j=1}^N\int_0^{l_j}|\partial_x^k u_j(t,x)|^2&=\sum_{j=1}^N\int_0^{l_j}|\partial_x^k\tilde{u}_j(t,x)|^2\leq \sum_{j=1}^N\int_0^{L}|\partial_x^k\tilde{u}_j(t,x)|^2, \qquad k=0,1,
\end{align*}
hence $u(t,\cdot)\in \mathbb{H}^1(\mathcal{T})$ and
\begin{align}\label{2.33b}
\|u(t,\cdot)\|_{\mathbb{L}^2(\mathcal{T})}\leq \|\tilde{u}(t,\cdot)\|_{\mathbb{L}^2(\tilde{\mathcal{T}})},
\end{align}
and 
\begin{align*}
	\|u(t,\cdot)\|_{\mathbb{H}^1(\mathcal{T})}^2\leq \|\tilde{u}(t,\cdot)\|_{\mathbb{H}^1(\mathcal{T})}^2,
\end{align*}
for every $t \in [0,T]$. Moreover, given $t,t_0 \in [0,T]$ we have
\begin{align*}
\|u(t,\cdot)-u(t_0,\cdot)\|_{\mathbb{L}^2(\mathcal{T})}^2&=\sum_{j=1}^N\int_0^{l_j}|u_j(t,x)-u_j(t_0,x)|^2\\
&=\sum_{j=1}^N\int_0^{l_j}|\tilde{u}_j(t,x)-\tilde{u}_j(t_0,x)|^2\\
&\leq \sum_{j=1}^N\int_0^{L}|\tilde{u}_j(t,x)-\tilde{u}_j(t_0,x)|^2\\
&=\|\tilde{u}(t,\cdot)-\tilde{u}(t_0,\cdot)\|_{\mathbb{L}^2(\mathcal{T})}^2
\end{align*}
from where we obtain $u \in C([0,T],\mathbb{L}^2(\mathcal{T}))$. Similarly, we get $u \in L^2(0,T;\mathbb{H}^1(\mathcal{T}))$ and
\begin{align}\label{2.34b}
\|u\|_{L^2(0,T;\mathbb{H}^1(\mathcal{T}))}\leq \|\tilde{u}\|_{L^2(0,T;\mathbb{H}^1(\tilde{\mathcal{T}}))}.
\end{align}
Combining \eqref{2.33b} and \eqref{2.34b} we get $u \in \mathbb{B}$ with
\begin{align}\label{2.35b}
\|u\|_{\mathbb{B}}\leq \|\tilde{u}\|_{\tilde{\mathbb{B}}}.
\end{align}
Finally, define $\Psi(u_0,g_0,g,p,f):=u$ and note that, thanks to \eqref{2.32b} and \eqref{2.35b}, we have
\begin{align*}
\|\Psi(u_0,g_0,g,p,f)\|_{\mathbb{B}}\leq &C\left(\|u^0\|_{\mathbb{L}^2(\mathcal{T})}+\|g_0\|_{H^{-\frac{1}{3}}(0,T)}+\|g\|_{\left[L^2(0,T)\right]^{N}}\right.\\
&\left.+\|p\|_{\left[H^\frac{1}{3}(0,T)\right]^N}+\|f\|_{L^1(0,T;\mathbb{L}^2(\mathcal{\mathcal{T}}))}\right),
\end{align*}
showing the continuity of $\Psi$. Moreover, by construction, for $u_0 \in D(A)$ and $g_0,g_j,p_j\in C_l^2([0,T])$, $\tilde{u}$ is the classical solution of \eqref{LkdV} in $(0,T)\times(0,L)$. Hence, in this case $u$ is the classical solution of \eqref{LkdV} in $(0,T)\times(0,l_j)$.
\medskip

\noindent Now, for $j=1,...,N$ and $k=0,1,2$ define
$$\Psi_j^k:\mathbb{L}^2(\mathcal{T})\times H^{-\frac{1}{3}}(0,T)\times\left[L^2(0,T)\right]^{N}\times \left[H^\frac{1}{3}(0,T)\right]^N\times L^1(0,T,\mathbb{L}^2(\mathcal{T}))\rightarrow C_b\left([0,l_j]; H^\frac{1-k}{3}(0,T)\right)$$
putting
\begin{align*}
\Psi_j^k(u_0,g_0,g,p,f)(x,\cdot)=\Psi_j^k(\tilde{u}_0,g_0,g,p,\tilde{f})(x,\cdot) \in H^\frac{1-k}{3}(0,T),\ \forall x \in [0,l_j].
\end{align*}
Clearly $\Psi_j^k$ is well defined and, for every $x \in [0,l_j]$,
\begin{align*}
\left\|\Psi_j^k(u_0,g_0,g,p,f)(x,\cdot)\right\|_{H^\frac{1-k}{3}(0,T)}&=\left\|\Psi_j^k(\tilde{u}_0,g_0,g,p,\tilde{f})(x,\cdot)\right\|_{H^\frac{1-k}{3}(0,T)}\\
&\leq \left\|\Psi_j^k(\tilde{u}_0,g_0,g,p,\tilde{f})\right\|_{C_b\left([0,l_j]; H^\frac{1-k}{3}(0,T)\right)}.
\end{align*}
So, we get that
\begin{equation*}
\begin{split}
\left\|\Psi_j^k(u_0,g_0,g,p,f)\right\|_{C_b\left([0,l_j]; H^\frac{1-k}{3}(0,T)\right)}&\leq C\left(\|u^0\|_{\mathbb{L}^2(\mathcal{T})}+\|g_0\|_{H^{-\frac{1}{3}}(0,T)}+\|g\|_{\left[L^2(0,T)\right]^{N}}\right.\\
&\left.+\|p\|_{\left[H^\frac{1}{3}(0,T)\right]^N}+\|f\|_{L^1(0,T;\mathbb{L}^2(\mathcal{\mathcal{T}}))}\right).
\end{split}
\end{equation*}
Finally, observe that for $u_0 \in D(A)$ and $g_0,g_j,p_j\in C_l^2([0,T])$,
\begin{align*}
\Psi_j^k(u_0,g_0,g,p,f)(x,\cdot)=\Psi_j^k(\tilde{u}_0,g_0,g,p,\tilde{f})(x,\cdot)=\partial_x^k\tilde{u}_j(x,\cdot)=\partial_x^ku_j(x,\cdot),\ \forall\ x \in [0,l_j],
\end{align*}
that is $\Psi_j^k(u_0,g_0,g,p,f)=\partial_x^ku_j
$ concluding the proof of case 2.
\end{proof}

Proposition \ref{W.P._s=0} motivates the introduction of the notion of \textit{mild solution} for system \eqref{LkdV}, guaranteeing its well-posedness in this sense, including existence, uniqueness, and continuous dependence on the data.
\begin{definition}
Given $T>0$, $u^0\in \mathbb{L}^2(\mathcal{T}),\ g_0 \in H^{-\frac{1}{3}}(0,T),\ g \in \left[L^2(0,T)\right]^{N},\ p\in\left[H^\frac{1}{3}(0,T)\right]^N$ and $f \in L^1(0,T;\mathbb{L}^2(\mathcal{T}))$, we define the mild solution $u\in \mathbb{B}$ of \eqref{LkdV} as a function $u=\Psi(u^0,g_0,g,p,f)$, where $\Psi$ is the map given in Proposition \ref{W.P._s=0}.
\end{definition}

\begin{remark}
Note that, according to Proposition \ref{W.P._s=0}, if $u^0 \in D(A)$, $g_0,g_j,p_j\in C_l^2([0,T])$, for $j=1,...,N$, and $f \in C^1\left([0,T];\mathbb{L}^2(\mathcal{T})\right)$, then $u$ is actually a classical solution to the system \eqref{LkdV}.
\end{remark}

\subsection{The case $s=3$} Given $s\geq 0$, we will consider
\begin{align*}
	u^0\in \mathbb{H}^s(\mathcal{T}),\quad g_0\in H^{\frac{s-1}{3}}(0,T),\quad g\in \left[H^\frac{s}{3}(0,T)\right]^{N},\quad p\in \left[H^{\frac{s+1}{3}}(0,T)\right]^N
\end{align*}
and
\begin{align*}
	f\in W^{\frac{s}{3},1}(0,T;\mathbb{L}^2(\mathcal{T}))\cap L^{\frac{6}{6-s}}(0,T;\mathbb{H}^\frac{s}{3}(\mathcal{T})).
\end{align*}
Denote by $\mathbb{X}_{s,T}$ the set of $s$-compatible data
\begin{align*}
(u^0,g_0,g,p)\in \mathbb{H}^s(\mathcal{T})\times H^{\frac{s-1}{3}}(0,T)\times \left[H^\frac{s}{3}(0,T)\right]^{N}\times \left[H^{\frac{s+1}{3}}(0,T)\right]^N.
\end{align*}
Suppose that $u$ is the solution of \eqref{LkdV}. Formally, we can derive the first equation of \eqref{LkdV} with respect to $t$, getting
\begin{align*}
	\begin{cases}
		\partial_t \left(\partial_t u_j\right)+\partial_x \left(\partial_t u_j\right)+\partial_x^3 \left(\partial_t u_j\right)=\partial_t f_j,&t \in (0,T),\ x \in (0,l_j),\ j=1,...,N,\\
		\partial_tu_j(t,0)=\partial_t u_1(t,0),&t\in (0,T),\ j=1,...,N,\\
		\displaystyle\sum_{j=1}^N\partial_x^2 \left(\partial_t u_j\right)(t,0)=-\alpha \partial_t u_1(t,0)+g_0'(t),&t \in (0,T),\\				
		\partial_t u_j(t,l_j)=p_j'(t),\ \ \ \ \ \partial_x \left(\partial_t u_j\right)(t,l_j)=g_j'(t),&t\in (0,T),\ j=1,...,N.
	\end{cases}
\end{align*}
Moreover, $\partial_t u_j(0,x)=-\partial_xu_j^0(x)-\partial_x^3u_j^0(x)+f_j(0,x)$ for $x \in (0,l_j)$ and $j=1,...,N$. Thus, setting $v_j=\partial_t u_j$ it follows that,
\begin{align}\label{v_j}
	\begin{cases}
		\partial_t v_j+\partial_x v_j+\partial_x^3 v_j=\partial_t f_j,&t \in (0,T),\ x \in (0,l_j),\ j=1,...,N\\
		v_j(t,0)=v_1(t,0),&t\in (0,T),\ j=1,...,N\\
		\displaystyle\sum_{j=1}^N\partial_x^2 v_j(t,0)=-\alpha v_1(t,0)+g_0'(t),&t \in (0,T)\\				
		v_j(t,l_j)=p_j'(t),\ \ \ \ \ \partial_x v_j(t,l_j)=g_j'(t),&t\in (0,T),\ j=1,...,N\\
		v_j(0,x)=-\partial_xu_j^0(x)-\partial_x^3u_j^0(x)+ f_j(0,x),&x \in (0,l_j),\ j=1,...,N
	\end{cases}
\end{align}
and
\begin{align}\label{u_j}
u_j(t,\cdot)=u_j^0+\int_0^tv_j(s,\cdot)ds,\ j=1,...,N.
\end{align}
Motivated by the above, we will show that, when $s=3$, the mild solution of \eqref{LkdV} is given by \eqref{u_j} and has the desirable regularity. Observe that, in this case,
 $$p_j \in H^{1+\frac{1}{3}}(0,T),\quad g_j \in H^1(0,T),\quad g_0\in H^{\frac{2}{3}}(0,T)\quad\text{and}\quad   f\in W^{1,1}(0,T;\mathbb{L}^2(\mathcal{T}))\cap L^2(0,T;\mathbb{H}^1(\mathcal{T})).$$
 So, by results about Sobolev spaces (see for example \cite{Bhattacharyya}), it follows that\footnote{For $s>0$ with $s-\frac{1}{2}\notin \mathbb{Z}$ and $s-|\beta|<0$, $\partial^\beta(H^s(0,T))\subset H^{s-|\beta|}(0,T)$ with continuous inclusion.}
\begin{align*}
	p_j'\in H^\frac{1}{3}(0,T),\quad g_j'\in L^2(0,T)\quad\text{and}\quad g_0'\in H^{-\frac{1}{3}}(0,T).
\end{align*}
Also note that
\begin{align*}
	-\partial_x u_j^0-\partial_x^3 u_j^0+f_j(0,\cdot)\in L^2(0,l_j)\quad\text{and}\quad\partial_t f\in L^2(0,T;\mathbb{L}^2(\mathcal{T})).
\end{align*}
Then, by Proposition \ref{W.P._s=0}, system \eqref{v_j} possess a unique (mild) solution $v \in \mathbb{B}_{T}$ satisfying
\begin{align*}
	\|v\|_{\mathbb{B}}&\leq C\left(
	\begin{array}{c}
	\|-\partial_x u^0-\partial_x^3 u^0+f(0,\cdot)\|_{\mathbb{L}^2(\mathcal{T})}+\|g_0\|_{H^{-\frac{1}{3}}(0,T)}\\
		+\|g'\|_{\left[L^2(0,T)\right]^{N}}+\|p'\|_{\left[H^\frac{1}{3}(0,T)\right]^N}+\|\partial_t f\|_{L^1(0,T;\mathbb{L}^2(\mathcal{T}))}
	\end{array}
	\right).
\end{align*}
Consequently, the corresponding coordinate functions $u_j(t,\cdot)$ in \eqref{u_j} are well defined in $\mathbb{L}^2(0,l_j)$ with $j=1,...,N$, as well as, the vector function $u(t,\cdot)=(u_1(t,\cdot),...,u_N(t,\cdot))$ is well defined in $\mathbb{L}^2(\mathcal{T})$, for any $t \in [0,T]$. This will lead us to the following result.

\begin{lemma}\label{lemma for v}
	Let $(u^0,g_0, g,p)\in \mathbb{X}_{3,T}$, $f\in W^{1,1}(0,T;\mathbb{L}^2(\mathcal{T}))\cap L^2(0,T;\mathbb{H}^1(\mathcal{T}))$ and $v$ the solution of \eqref{v_j}. Then for $\beta=0,1,2,3$ we have:
	\begin{enumerate}
		\item[i.] The function
		\begin{align*}
			\begin{array}{rcl}
				[0,T]&\rightarrow&\mathcal{D}'(0,l_j)\\
				s&\mapsto&\partial_x^\beta v_j(s,\cdot)
			\end{array}
		\end{align*}
		is integrable in $[0,T]$.
		\item[ii.]For any $t \in [0,T]$ we have $\displaystyle\int_0^tv_j(s,\cdot)ds \in H^4(0,l_j)$ and for $\beta=0,1,2,3$ we have
		\begin{align*}
			\partial_x^\beta\int_0^tv_j(s,\cdot)ds=\int_0^t\partial_x^\beta v_j(s,\cdot)ds.
		\end{align*}
	\end{enumerate}
\end{lemma}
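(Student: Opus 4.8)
The plan is to read off statement (i) directly from the membership $v\in\mathbb B_T$, to establish the commutation formula in (ii) by testing against $\mathcal D(0,l_j)$, and to obtain the spatial regularity $\int_0^t v_j\,ds\in H^4(0,l_j)$ by integrating the equation in \eqref{v_j} over the time variable and tracking the regularity of the right-hand side. Only the facts $v_j\in C([0,T];L^2(0,l_j))\cap L^2(0,T;H^1(0,l_j))$, that $v$ solves \eqref{v_j} in the distributional sense, and the data regularity carried by $\mathbb X_{3,T}$ are used. For (i): when $\beta=0$ we have $v_j\in C([0,T];L^2(0,l_j))\subset L^1(0,T;L^2(0,l_j))\hookrightarrow L^1(0,T;\mathcal D'(0,l_j))$; when $\beta=1,2,3$ the boundedness of $\partial_x^\beta\colon H^1(0,l_j)\to H^{1-\beta}(0,l_j)$ gives $s\mapsto\partial_x^\beta v_j(s,\cdot)\in L^2(0,T;H^{1-\beta}(0,l_j))\subset L^1(0,T;\mathcal D'(0,l_j))$. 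Either way this is precisely the integrability asserted in (i), and no use of the equation is required.

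For the commutation formula, fix $\varphi\in\mathcal D(0,l_j)$. Since $v_j\in C([0,T];L^2(0,l_j))$, $\int_0^t v_j(s,\cdot)\,ds$ is a Bochner integral in $L^2(0,l_j)$, whence $\langle\int_0^t v_j(s,\cdot)\,ds,\psi\rangle=\int_0^t\langle v_j(s,\cdot),\psi\rangle\,ds$ for all $\psi\in L^2(0,l_j)$. Taking $\psi=(-1)^\beta\varphi^{(\beta)}$ and moving the spatial derivative onto $v_j(s,\cdot)$ under the integral sign yields $\langle\partial_x^\beta\int_0^t v_j(s,\cdot)\,ds,\varphi\rangle=\int_0^t\langle\partial_x^\beta v_j(s,\cdot),\varphi\rangle\,ds$, and by (i) the last integral equals $\langle\int_0^t\partial_x^\beta v_j(s,\cdot)\,ds,\varphi\rangle$. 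As $\varphi$ is arbitrary this proves $\partial_x^\beta\int_0^t v_j(s,\cdot)\,ds=\int_0^t\partial_x^\beta v_j(s,\cdot)\,ds$ in $\mathcal D'(0,l_j)$ for $\beta=0,1,2,3$.

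For the regularity statement, I first note that the mild solution $v$ of \eqref{v_j} solves that system in $\mathcal D'((0,T)\times(0,l_j))$: approximating the data of \eqref{v_j} by smooth compatible data (initial datum in $D(A)$, boundary data in $C_l^2([0,T])$, forcing in $C^1([0,T];\mathbb L^2(\mathcal T))$) one gets classical solutions by Proposition \ref{prop 2.2}, and one passes to the limit using the continuity of the solution operator of Proposition \ref{W.P._s=0} and the fact that convergence in $\mathbb B_T$ implies componentwise convergence in $\mathcal D'((0,T)\times(0,l_j))$, on which $\partial_t,\partial_x,\partial_x^3$ act continuously. Hence $\partial_x^3 v_j=\partial_t f_j-\partial_t v_j-\partial_x v_j$, each summand lying in $L^1(0,T;\mathcal D'(0,l_j))$ (for $\partial_t v_j$ one reads this off the identity, since $\partial_t f_j\in L^1(0,T;L^2)$, $\partial_x v_j\in L^2(0,T;L^2)$ and $\partial_x^3 v_j\in L^2(0,T;H^{-2})$). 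Integrating over $[0,t]$, using the vector-valued fundamental theorem of calculus --- legitimate because $v_j\in C([0,T];L^2(0,l_j))$ with $\partial_t v_j\in L^1(0,T;H^{-2}(0,l_j))$ and because $f_j\in W^{1,1}(0,T;L^2(0,l_j))$ --- together with the commutation formula, one obtains
\begin{align*}
\partial_x^3\int_0^t v_j(s,\cdot)\,ds=\big(f_j(t,\cdot)-f_j(0,\cdot)\big)-\big(v_j(t,\cdot)-v_j(0,\cdot)\big)-\partial_x\int_0^t v_j(s,\cdot)\,ds .
\end{align*}
Each term on the right is in $L^2(0,l_j)$: the two brackets because $f,v\in C([0,T];\mathbb L^2(\mathcal T))$, and the last term because $\partial_x\int_0^t v_j(s,\cdot)\,ds=\int_0^t\partial_x v_j(s,\cdot)\,ds$ is a Bochner integral in $L^2(0,l_j)$ of $\partial_x v_j\in L^1(0,T;L^2(0,l_j))$. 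Thus $\int_0^t v_j(s,\cdot)\,ds$ lies in $L^2(0,l_j)$ with third derivative in $L^2(0,l_j)$, which on a bounded interval forces it into $H^3(0,l_j)$. A further iteration of the same identity --- now with $\int_0^t v_j(s,\cdot)\,ds\in H^3(0,l_j)$ available, after substituting $v_j(0,\cdot)=-\partial_x u_j^0-\partial_x^3 u_j^0+f_j(0,\cdot)$ (so that the $f_j(0,\cdot)$ contributions cancel) and differentiating once more in $x$ --- upgrades this to $\int_0^t v_j(s,\cdot)\,ds\in H^4(0,l_j)$.

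The two soft parts are routine; the crux is the PDE-based argument of the last paragraph. The main obstacle is making the time-integration of the equation rigorous --- in particular the vector-valued fundamental theorem of calculus in the negative-order space $H^{-2}(0,l_j)$, where one must check that the a priori only distributional time derivative $\partial_t v_j$ integrates back to $v_j(t,\cdot)-v_j(0,\cdot)$; this rests on the continuity $v_j\in C([0,T];L^2(0,l_j))$. The second delicate point, which is the real content of the $H^4$ conclusion, is the precise bookkeeping of Sobolev indices in the final iteration, where the low regularity of the initial datum $v_j(0,\cdot)=-\partial_x u_j^0-\partial_x^3 u_j^0+f_j(0,\cdot)$ must be offset against the $H^3$-gain obtained in the first pass.
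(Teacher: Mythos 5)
Your part (i), your commutation identity, and the first pass of your PDE argument are correct and well justified: integrating \eqref{v_j} in time (with the vector-valued FTC in $H^{-2}(0,l_j)$, legitimate since $v_j\in C([0,T];L^2(0,l_j))$ and the distributional $\partial_t v_j$ lies in $L^1(0,T;H^{-2}(0,l_j))$) gives $\partial_x^3\int_0^t v_j\,ds=\bigl(f_j(t,\cdot)-f_j(0,\cdot)\bigr)-\bigl(v_j(t,\cdot)-v_j(0,\cdot)\bigr)-\partial_x\int_0^t v_j\,ds\in L^2(0,l_j)$, hence $\int_0^t v_j\,ds\in H^3(0,l_j)$ for every $t$. (The paper states the lemma without proof, so there is nothing to compare against.) The gap is exactly at the step you flag as the crux: the upgrade to $H^4(0,l_j)$ is asserted, not proved, and the proposed ``further iteration'' cannot be carried out. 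After substituting $v_j(0,\cdot)=-\partial_x u_j^0-\partial_x^3u_j^0+f_j(0,\cdot)$ the identity reads $\partial_x^3\int_0^t v_j\,ds=f_j(t,\cdot)-v_j(t,\cdot)-\partial_x u_j^0-\partial_x^3u_j^0-\partial_x\int_0^t v_j\,ds$, and to differentiate once more you would need the right-hand side in $H^1(0,l_j)$. But for an arbitrary fixed $t$ the terms $f_j(t,\cdot)$ and $v_j(t,\cdot)$ are only in $L^2$ (they lie in $H^1$ merely for a.e.\ $t$, since $f\in L^2(0,T;\mathbb{H}^1(\mathcal{T}))$ and $v\in L^2(0,T;\mathbb{H}^1(\mathcal{T}))$), and $\partial_x^3u_j^0$ is only in $L^2$ because $u^0\in\mathbb{H}^3(\mathcal{T})$. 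Worse, by the equation for $u$ the problematic combination $f_j(t,\cdot)-v_j(t,\cdot)-\partial_x^3u_j^0$ equals $\partial_x u_j(t,\cdot)+\partial_x^3\int_0^t v_j\,ds$, so asserting that it belongs to $H^1$ is equivalent to the conclusion you are trying to reach: the iteration is circular.

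Moreover, no repair can close this at the stated level of regularity, because the $H^4$ assertion itself is too strong. Since $u_j(t,\cdot)=u_j^0+\int_0^t v_j(s,\cdot)\,ds$ and $u\in L^2(0,T;\mathbb{H}^4(\mathcal{T}))$ (Proposition \ref{W.P._s=3}) gives $u_j(t,\cdot)\in H^4(0,l_j)$ for a.e.\ $t$, having $\int_0^t v_j\,ds\in H^4(0,l_j)$ for every $t$ would force $u_j^0=u_j(t,\cdot)-\int_0^t v_j\,ds\in H^4(0,l_j)$, which fails whenever $u^0\in\mathbb{H}^3(\mathcal{T})\setminus\mathbb{H}^4(\mathcal{T})$; the same obstruction is visible on the Fourier side for the pure Cauchy problem with $f\equiv0$, where $\int_0^t v\,ds=S(t)u^0-u^0$ and the multiplier $\langle\xi\rangle\,\lvert 1-e^{it(\xi^3-\xi)}\rvert$ is unbounded. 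In other words, what is provable (and what Proposition \ref{W.P._s=3} actually uses from item ii, namely $u_j(t,\cdot)\in H^3(0,l_j)$ for every $t$, the $\mathbb{H}^4$ information being only the a.e./$L^2_t$ bound derived separately from $\partial_x^3u_j(t,\cdot)=f_j(t,\cdot)-\partial_xu_j(t,\cdot)-v_j(t,\cdot)$) is precisely the $H^3$ statement your argument delivers; the final $H^4$ claim should be weakened accordingly rather than argued as in your last paragraph.
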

The previous lemma can give us the regularity for $u$.
\begin{proposition}\label{W.P._s=3}
Let $(u^0,g_0,g,p) \in \mathbb{H}^3(\mathcal{T})\times H^{\frac{3-1}{3}}(0,T)\times [H^\frac{3}{3}(0,T)]^N\times [H^{\frac{3+1}{3}}(0,T)]^N$ satisfying the linear compatibility conditions for $s=3$ and $f\in W^{1,1}(0,T;\mathbb{L}^2(\mathcal{T}))\cap L^2(0,T;\mathbb{H}^1(\mathcal{T}))$ be. For $j=1,...,N$ the function $t\mapsto u_j(t,\cdot)$ given by \eqref{u_j} satisfies the following items:
\begin{enumerate}
	\item[i.] $u_j \in C^1([0,T],L^2(0,l_j))\cap C([0,T];H^3(0,l_j))$;
	\item[ii.] $u\in \mathbb{B}_{3,T}=C([0,T];\mathbb{H}^3(\mathcal{T}))\cap L^2(0,T;\mathbb{H}^4(\mathcal{T}))$ is the unique classical solution to the problem \eqref{LkdV}; and
	\begin{equation*}
	\begin{split}
		\|u\|_{\mathbb{B}_{3,T}}\leq &C\left( \|u^0\|_{\mathbb{H}^3(\mathcal{T})}+\|g_0\|_{H^\frac{3-1}{3}(0,T)}+\|g\|_{[H^\frac{3}{3}(0,T)]^N}\right.\\
			&\left.+\|p\|_{[H^\frac{3+1}{3}(0,T)]^N}+\|f\|_{W^{1,1}(0,T;\mathbb{L}^2(\mathcal{T}))\cap L^2(0,T;\mathbb{H}^1(\mathcal{T}))}\right)
\end{split}
	\end{equation*}
	for some constant $C>0$.
	\item[iii.] $u \in H^1(0,T;\mathbb{H}^1(\mathcal{T}))$ and
	\begin{equation*}
		\begin{split}
		\|u\|_{H^1(0,T;\mathbb{H}^1(\mathcal{T}))}\leq& C\left(\|u^0\|_{\mathbb{H}^3(\mathcal{T})}+\|g_0\|_{H^\frac{3-1}{3}(0,T)}+\|g\|_{[H^\frac{3}{3}(0,T)]^N}\right.\\ &\left.+\|p\|_{[H^\frac{3+1}{3}(0,T)]^N}+\|f\|_{W^{1,1}(0,T;\mathbb{L}^2(\mathcal{T}))\cap L^2(0,T;\mathbb{H}^1(\mathcal{T}))}\right),
		\end{split}
	\end{equation*}
		for some constant $C>0$.
		\item [iv.] $\partial_x^ku_j\in L_x^\infty\left([0,l_j];H^\frac{3+1-k}{3}(0,T)\right)$ and 
		\begin{align*}
			\|\partial_x^ku_j\|_{L_x^\infty\left([0,l_j];H^\frac{3+1-k}{3}(0,T)\right)}\leq& C\left(\|u^0\|_{\mathbb{H}^3(\mathcal{T})}+\|g_0\|_{H^\frac{3-1}{3}(0,T)}+\|g\|_{[H^\frac{3}{3}(0,T)]^N}\right.\\ &\left.+\|p\|_{[H^\frac{3+1}{3}(0,T)]^N}+\|f\|_{W^{1,1}(0,T;\mathbb{L}^2(\mathcal{T}))\cap L^2(0,T;\mathbb{H}^1(\mathcal{T}))}\right).
		\end{align*}
\end{enumerate}
\end{proposition}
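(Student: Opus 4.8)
The plan is to read off every assertion about $u$ from the already–constructed solution $v\in\mathbb{B}_T$ of \eqref{v_j} (produced by Proposition \ref{W.P._s=0} at level $s=0$, together with the bound on $\|v\|_{\mathbb{B}_T}$ recorded just before the statement), from Lemma \ref{lemma for v}, and from the equation itself used as a device to trade time regularity for space regularity. First, Lemma \ref{lemma for v} gives, for every $t\in[0,T]$, that $\int_0^t v_j(s,\cdot)\,ds\in H^4(0,l_j)$ with $\partial_x^\beta\int_0^t v_j\,ds=\int_0^t\partial_x^\beta v_j\,ds$ for $\beta=0,1,2,3$; since $u_j^0\in H^3(0,l_j)$, the function $u_j(t,\cdot)$ defined by \eqref{u_j} lies in $H^3(0,l_j)$, and since $v_j\in C([0,T];L^2(0,l_j))$ the map $t\mapsto u_j(t,\cdot)$ is $C^1$ into $L^2(0,l_j)$ with $\partial_t u_j=v_j$. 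Next I would verify that $u$ solves \eqref{LkdV}: integrating the first line of \eqref{v_j} over $(0,t)$, commuting $\partial_x$ and $\partial_x^3$ with the integral by Lemma \ref{lemma for v}, and using the initial datum $v_j(0,\cdot)=-\partial_x u_j^0-\partial_x^3 u_j^0+f_j(0,\cdot)$ yields $\partial_t u_j+\partial_x u_j+\partial_x^3 u_j=f_j$ on $(0,T)\times(0,l_j)$; the Dirichlet/Neumann conditions at $x=0$ and $x=l_j$ follow the same way, by integrating the corresponding lines of \eqref{v_j} in time and invoking the linear $s$-compatibility of $(u^0,g_0,g,p)$ at order $k=0$ (no condition is imposed at $k=1=[3/3]$, since $3-3[3/3]=0\le\tfrac12$).

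The delicate point in that step, which I expect to be the main obstacle, is the junction condition $\sum_{j=1}^N\partial_x^2 u_j(t,0)=-\alpha u_1(t,0)+g_0(t)$: the individual traces $\partial_x^2 v_j(\cdot,0)$ only belong to $H^{-\frac13}(0,T)$, so one cannot naively integrate them in $t$. The way around is that, by \eqref{v_j}, their sum satisfies $\sum_j\partial_x^2 v_j(\cdot,0)=-\alpha v_1(\cdot,0)+g_0'$, and one pairs this distribution against $\mathbf{1}_{[0,t]}\in H^{\frac13}(0,T)$ (admissible since $\tfrac13<\tfrac12$), uses $v_1(\cdot,0)\in H^{\frac13}(0,T)\subset L^2(0,T)$ and $\langle g_0',\mathbf{1}_{[0,t]}\rangle=g_0(t)-g_0(0)$, and combines with the $k=0$ compatibility identity $\sum_j\partial_x^2 u_j^0(0)=-\alpha u_1^0(0)+g_0(0)$ to recover exactly the claimed relation. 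Where convenient, this is cleanest to run first for smooth data (so that $u$ is a genuine classical solution) and then pass to the limit by density, using the estimates of Proposition \ref{W.P._s=0}.

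Once $u$ is known to solve \eqref{LkdV}, items (i)–(ii) follow by bootstrapping in $x$. From $\partial_x u_j=\partial_x u_j^0+\int_0^t\partial_x v_j\,ds$ and $\partial_x v_j\in L^2(0,T;L^2(0,l_j))\subset L^1(0,T;L^2(0,l_j))$ one gets $u_j\in C([0,T];H^1(0,l_j))$; rewriting the equation as $\partial_x^3 u_j=f_j-v_j-\partial_x u_j$ and using $f\in W^{1,1}(0,T;\mathbb{L}^2(\mathcal{T}))\hookrightarrow C([0,T];\mathbb{L}^2(\mathcal{T}))$ and $v\in C([0,T];\mathbb{L}^2(\mathcal{T}))$ gives $\partial_x^3 u_j\in C([0,T];L^2(0,l_j))$, and a Gagliardo–Nirenberg interpolation on $(0,l_j)$ upgrades this to $\partial_x^2 u_j\in C([0,T];L^2(0,l_j))$, so $u\in C([0,T];\mathbb{H}^3(\mathcal{T}))$. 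Differentiating $\partial_x^3 u_j=f_j-v_j-\partial_x u_j$ once more in $x$ and using $f,v\in L^2(0,T;\mathbb{H}^1(\mathcal{T}))$ (recall $f\in L^{6/(6-s)}(0,T;\mathbb{H}^{s/3})=L^2(0,T;\mathbb{H}^1)$ when $s=3$) gives $\partial_x^4 u_j\in L^2(0,T;L^2(0,l_j))$, hence $u\in L^2(0,T;\mathbb{H}^4(\mathcal{T}))$ and $u\in\mathbb{B}_{3,T}$; uniqueness is inherited from Proposition \ref{W.P._s=0}. All constants produced above are controlled by $\|v\|_{\mathbb{B}_T}$, which by Proposition \ref{W.P._s=0} applied to \eqref{v_j} together with the elementary bounds $\|p'\|_{[H^{1/3}]^N}\le C\|p\|_{[H^{4/3}]^N}$, $\|g'\|_{[L^2]^N}\le C\|g\|_{[H^1]^N}$, $\|g_0'\|_{H^{-1/3}}\le C\|g_0\|_{H^{2/3}}$ and $\|f(0,\cdot)\|_{\mathbb{L}^2(\mathcal{T})}\le C\|f\|_{W^{1,1}(0,T;\mathbb{L}^2(\mathcal{T}))}$ is dominated by the right–hand side of the estimate in (ii); this proves that inequality.

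For (iii) one observes that $\partial_t u=v\in L^2(0,T;\mathbb{H}^1(\mathcal{T}))$ and $u\in C([0,T];\mathbb{H}^3(\mathcal{T}))\subset L^2(0,T;\mathbb{H}^1(\mathcal{T}))$, so $u\in H^1(0,T;\mathbb{H}^1(\mathcal{T}))$, with the bound again coming from the estimates just listed. For (iv), fixing $x\in[0,l_j]$ one has $\partial_x^k u_j(x,\cdot)=\partial_x^k u_j^0(x)+\int_0^{\,\cdot}\partial_x^k v_j(x,s)\,ds$; the first term is a constant bounded by $C\|u_j^0\|_{H^3(0,l_j)}$ through $H^3\hookrightarrow C^2$, and the second is the time antiderivative of $\partial_x^k v_j(x,\cdot)\in H^{\frac{1-k}{3}}(0,T)$, which is bounded in $C_b([0,l_j];H^{\frac{1-k}{3}}(0,T))$ by the maps $\Psi_j^k$ of Proposition \ref{W.P._s=0}; since $g\mapsto\int_0^{\,\cdot}g$ maps $H^\sigma(0,T)$ boundedly into $H^{\sigma+1}(0,T)$ for $\sigma\ge-\tfrac13$, this gives $\partial_x^k u_j(x,\cdot)\in H^{\frac{1-k}{3}+1}(0,T)=H^{\frac{4-k}{3}}(0,T)$ with norm uniform in $x$, and the stated estimate. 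As noted, the one genuinely delicate ingredient is the treatment of the vertex condition through the $H^{-1/3}$–$H^{1/3}$ pairing and the commutation of time integration with spatial derivatives and traces — precisely what Lemma \ref{lemma for v} and an approximation by smooth data are there to provide.
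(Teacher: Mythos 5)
Your proposal is correct and follows essentially the same route as the paper: define $u$ by \eqref{u_j} with $v$ the $s=0$ solution of \eqref{v_j} from Proposition \ref{W.P._s=0}, use Lemma \ref{lemma for v} to commute spatial derivatives with the time integral, verify the equation and boundary conditions by integrating \eqref{v_j} in time together with the linear compatibility conditions, bootstrap through the equation to get $C([0,T];\mathbb{H}^3(\mathcal{T}))\cap L^2(0,T;\mathbb{H}^4(\mathcal{T}))$ with the stated bound via $\|v\|_{\mathbb{B}_T}$ and the elementary estimates on $g_0',g',p',f(0,\cdot)$, and obtain (iii)--(iv) from $\partial_t\partial_x^k u_j=\partial_x^k v_j$ plus the trace bounds of Proposition \ref{W.P._s=0}. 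The only cosmetic differences are that the paper controls the intermediate derivatives via the graph-norm estimate \eqref{H^3<P_j} rather than Gagliardo--Nirenberg interpolation, and your explicit $H^{-1/3}$--$H^{1/3}$ pairing at the central node spells out a step the paper handles tersely through the compatibility conditions.
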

\begin{proof}
Consider $j \in\{1,...,N\}$. 

\vspace{0.2 cm}
\noindent\textbf{i.} Since $v_j\in C([0,T],L^2(0,l_j))$ it is immediate that  by \eqref{u_j} that $u_j \in C^{1}([0,T],L^2(0,l_j))$.

\vspace{0.2cm}
\noindent \textbf{ii.} Let $t \in [0,T]$ be given. From \eqref{u_j} and item ii of Lemma \ref{lemma for v} we have $u_j(t,\cdot) \in H^3(0,l_j)$.
From \eqref{v_j} and for $r \in [0,T]$, $t\geq r$
\begin{align*}
\partial_x^3u_j(t,\cdot)-\partial_x^3u_j(r,\cdot)&=-v_j(t,\cdot)+v_j(r,\cdot)-\int_0^t\partial_x v_j(s,\cdot)ds+\int_0^r\partial_x v_j(s,\cdot)ds+f_j(t,\cdot)-f_j(r,\cdot)
\end{align*}
hence
\begin{align*}
\left\|\partial_x^3u_j(t,\cdot)-\partial_x^3u_j(r,\cdot)\right\|_{L^2(0,l_j)}\leq& \|v_j(t,\cdot)-v_j(r,\cdot)\|_{L^2(0,l_j)}\\
&+\left\|\int_0^t\partial_x v_j(s,\cdot)ds-\int_0^r\partial_x v_j(s,\cdot)ds\right\|_{L^2(0,l_j)}\\
&+\|f_j(t,\cdot)-f_j(r,\cdot)\|_{L^2(0,l_j)}.
\end{align*}
Since $v \in \mathbb{B}$ and due to continuous embedding $W^{1,1}(0,T;\mathbb{L}^2(\mathcal{T}))\hookrightarrow C([0,T];\mathbb{L}^2(\mathcal{T}))$ (see \cite[Section 5.9, Theorem 2]{Evans}) we have
\begin{align*}
\|v_j(t,\cdot)-v_j(r,\cdot)\|_{L^2(0,l_j)},\ \ \|f_j(t,\cdot)-f_j(r,\cdot)\|_{L^2(0,l_j)}\rightarrow 0\quad\text{as}\quad r\rightarrow t.
\end{align*}
On the other hand, the Hölder inequality gives us
\begin{align*}
\left\|\int_0^t\partial_x v_j(s,\cdot)ds-\int_0^r\partial_x v_j(s,\cdot)ds\right\|_{L^2(0,l_j)}&\leq \int_r^t\left\|\partial_x v_j(s,\cdot)\right\|_{L^2(0,l_j)}ds\\
&\leq |t-r|^\frac{1}{2}\|v\|_\mathbb{B}.
\end{align*}
Hence
\begin{align}\label{eq 2.38}
\left\|\int_0^t\partial_x v_j(s,\cdot)ds-\int_0^r\partial_x v_j(s,\cdot)ds\right\|_{L^2(0,l_j)}\rightarrow 0\quad \text{as}\quad r\rightarrow t.
\end{align}
We conclude that
\begin{align*}
\left\|\partial_x^3u_j(t,\cdot)-\partial_x^3u_j(r,\cdot)\right\|_{L^2(0,l_j)}\rightarrow 0\quad \text{as}\quad r\rightarrow t
\end{align*}
and, therefore,  $\partial_x^3u_j \in C([0,T];L^2(0,l_j))$. In the same way,  $\partial_xu_j \in C([0,T];L^2(0,l_j))$ since
\begin{align*}
\partial_xu_j(t,\cdot)-\partial_xu_j(r,\cdot)=\int_0^t\partial_x v_j(s,\cdot)ds-\int_0^r\partial_x v_j(s,\cdot)ds,
\end{align*}
so, by \eqref{eq 2.38}, $\partial_xu_j(r,\cdot)\rightarrow \partial_xu_j(t,\cdot)$ as $r\rightarrow t$. From \cite[Lemma A.2]{KdV_flatness} and \eqref{H^3<P_j}, we have
\begin{align*}
\left\|u_j(t,\cdot)-u_j(r,\cdot)\right\|_{H^3(0,l_j)}&\leq C\left(\left\|u_j(t,\cdot)-u_j(r,\cdot)\right\|_{L^2(0,l_j)}+\left\|P_ju_j(t,\cdot)-P_ju_j(r,\cdot)\right\|_{L^2(0,l_j)}\right)\\
&\leq C\left(\left\|u_j(t,\cdot)-u_j(r,\cdot)\right\|_{L^2(0,l_j)}+ \left\|\partial_x^3u_j(t,\cdot)-\partial_x^3u_j(r,\cdot)\right\|_{L^2(0,l_j)}\right.\\
&\ \ \left.  \left\|\partial_xu_j(t,\cdot)-\partial_xu_j(r,\cdot)\right\|_{L^2(0,l_j)}\right).
\end{align*}
Since $u_j,\partial_x u_j, \partial_x^3 u_j \in C([0,T];L^2(0,T))$, it follows that, $u_j \in C([0,T];H^3(0,l_j))$. Now, by \eqref{u_j} we have
\begin{align}\label{eq 2.39}
\partial_tu_j(t,\cdot)+\partial_x u_j(t,\cdot)+\partial_x^3 u_j(t,\cdot)&=v_j(t,\cdot)+\partial_x u_j^0+\partial_x^3 u_j^0+\int_0^t \partial_x v_j(s,\cdot)ds+\int_0^t \partial_x^3 v_j(s,\cdot)ds.
\end{align}
From \eqref{v_j}, we have $v_j'(s,\cdot)=- \partial_x v_j(s,\cdot)-\partial_x^3 v_j(s,\cdot) + f_j'(s,\cdot).$ Using Lemma \ref{lemma for v} and the hypothesis about $f$, it follows that $s\mapsto v'_j(s,\cdot)$ is integrable, where $'$ denotes the (distributional) derivative with respect to the temporal variable. Hence,
\begin{align*}
\int_0^t  v_j'(s,\cdot)ds=v_j(t,\cdot)-v_j(0,\cdot)
\end{align*}
On the other hand,
\begin{align*}
\int_0^t v_j'(s,\cdot)ds=-\int_0^t \partial_x v_j(s,\cdot)ds-\int_0^t \partial_x^3 v_j(s,\cdot)ds +\int_0^t f_j'(s,\cdot)ds
\end{align*}
hence
\begin{align*}
\int_0^t \partial_x v_j(s,\cdot)ds+\int_0^t \partial_x^3 v_j(s,\cdot)ds&=-\int_0^t v_j'(s,\cdot)ds+\int_0^t f_j'(s,\cdot)ds\\
&=v_j(0,\cdot)-v_j(t,\cdot)+f_j(t,\cdot)-f_j(0,\cdot).
\end{align*}
Using this in \eqref{eq 2.39} we obtain
\begin{align*}
	\partial_tu_j(t,\cdot)+\partial_x u_j(t,\cdot)+\partial_x^3 u_j(t,\cdot)&=\partial_x u_j^0+\partial_x^3 u_j^0+v_j(0,\cdot)+f_j(t,\cdot)-f_j(0,\cdot).
\end{align*}
From this, \eqref{v_j} gives us
\begin{align*}
	\partial_tu_j(t,\cdot)+\partial_x u_j(t,\cdot)+\partial_x^3 u_j(t,\cdot)&=f_j(t,\cdot).
\end{align*}
Moreover, using \eqref{v_j} and \eqref{u_j} and the compatibility conditions, we deduce that $u$ is a classical solution to \eqref{LkdV}.

\medskip

\noindent By the previous part we have $u\in C([0,T];\mathbb{H}^3(\mathcal{T}))$. On the other hand, note that
\begin{align*}
\partial_x^3 u_j(t,\cdot)=f_j(t,\cdot)-\partial_x u_j(t,\cdot)-\partial_t u_j(t,\cdot)=f_j(t,\cdot)-\partial_x u_j(t,\cdot)-v_j(t,\cdot)\in H^1(0,l_j)
\end{align*}
so $u_j(t,\cdot)\in H^4(0,l_j)$. Moreover $\partial_x^4 u_j(t,\cdot)=\partial_x f_j(t,\cdot)-\partial_x^2 u_j(t,\cdot)-\partial_x v_j(t,\cdot)$, which gives us
\begin{align*}
\|\partial_x^4 u_j(t,\cdot)\|_{L^2(0,l_j)}^2\leq C\left(\|\partial_x f_j(t,\cdot)\|_{L^2(0,l_j)}^2+\|\partial_x^2 u_j(t,\cdot)\|_{L^2(0,l_j)}^2+\|\partial_x v_j(t,\cdot)\|_{L^2(0,l_j)}^2\right)
\end{align*}
Thus
\begin{align}\label{eq 2.40}
\|u_j(t,\cdot)\|_{H^4(0,l_j)}^2&\leq C^2\left(\|u_j(t,\cdot)\|_{H^3(0,l_j)}^2+\|\partial_x f_j(t,\cdot)\|_{L^2(0,l_j)}^2+\|\partial_x v_j(t,\cdot)\|_{L^2(0,l_j)}^2\right).
\end{align}
Integrating from $0$ to $T$ and using the regularities for $f,u$ and $v$ we get $u_j \in L^2(0,T;H^4(0,l_j))$ and, consequently, $u\in C([0,T];\mathbb{H}^3(\mathcal{T}))\cap L^2(0,T;\mathbb{H}^4(\mathcal{T}))$. In addition, observe that
\begin{equation*}
\begin{aligned}
	\|u_j(t,\cdot)\|_{H^3(0,l_j)}&\leq C\left(\|u_j(t,\cdot)\|_{L^2(0,l_j)}+\|P_ju_j(t,\cdot)\|_{L^2(0,l_j)}\right)\\
	&=C\left(\|u_j(t,\cdot)\|_{L^2(0,l_j)}+\|\partial_x u_j(t,\cdot)+\partial_x^3 u_j(t,\cdot)\|_{L^2(0,l_j)}\right)\\
	&=C\left(\|u_j(t,\cdot)\|_{L^2(0,l_j)}+\|f_j(t,\cdot)-v_j(t,\cdot)\|_{L^2(0,l_j)}\right)\\
	&\leq C\left(\|u_j(t,\cdot)\|_{L^2(0,l_j)}+\|f_j(t,\cdot)\|_{L^2(0,l_j)}+\|v_j(t,\cdot)\|_{L^2(0,l_j)}\right).
\end{aligned}
\end{equation*}
Therefore
\begin{align}\label{eq 2.41}
	\|u_j(t,\cdot)\|_{H^3(0,l_j)}^2\leq C\left(\|u_j(t,\cdot)\|_{L^2(0,l_j)}^2+\|f_j(t,\cdot)\|_{L^2(0,l_j)}^2+\|v_j(t,\cdot)\|_{L^2(0,l_j)}^2\right).
\end{align}
and consequently
\begin{align*}
	\|u(t,\cdot)\|_{\mathbb{H}^3(\mathcal{T})}\leq C\left(\|u(t,\cdot)\|_{\mathbb{L}^2(\mathcal{T})}+\|f(t,\cdot)\|_{\mathbb{L}^2(\mathcal{T})}+\|v(t,\cdot)\|_{\mathbb{L}^2(\mathcal{T})}\right).
\end{align*}
Since $u,v\in C([0,T];\mathbb{L}^2(\mathcal{T}))$ and $f\in W^{1,1}(0,T;\mathbb{L}^2(\mathcal{T}))\hookrightarrow C([0,T];\mathbb{L}^2(\mathcal{T}))$, passing to the maximum with $t \in [0,T]$, we obtain
\begin{align}\label{eq 2.43}
	\|u\|_{C([0,T];\mathbb{H}^3(\mathcal{T}))}&\leq C\left(\|u\|_{C([0,T];\mathbb{L}^2(\mathcal{T}))}+\|f\|_{W^{1,1}(0,T;\mathbb{L}^2(\mathcal{T}))}+\|v\|_{C([0,T];\mathbb{L}^2(\mathcal{T}))}\right).
\end{align}
Using \eqref{eq 2.41} in \eqref{eq 2.40} it follows that
\begin{align*}
	\|u_j(t,\cdot)\|_{H^4(0,l_j)}^2&\leq C\left(\|u_j(t,\cdot)\|_{L^2(0,l_j)}^2+\| f_j(t,\cdot)\|_{H^1(0,l_j)}^2+\| v_j(t,\cdot)\|_{H^1(0,l_j)}^2\right).
\end{align*}
Taking the sum with respect to $j$ and integrating from $0$ to $T$ we have
for some $C=C(T)>0$
\begin{align}\label{eq 2.42}
	\|u\|_{L^2(0,T;\mathbb{H}^4(\mathcal{T}))}&\leq C\left(\|u\|_{C([0,T];\mathbb{L}^2(\mathcal{T}))}+\|f\|_{L^2(0,T;\mathbb{H}^1(\mathcal{T}))}+\|v\|_{L^2(0,T;\mathbb{H}^1(\mathcal{T}))}\right).
\end{align}
Adding \eqref{eq 2.42} and \eqref{eq 2.43} it follows that
\begin{align}\label{eq 2.44}
\|u\|_{\mathbb{B}_{3,T}}\leq C\left(\|u\|_{\mathbb{B}_{0,T}}+\|f\|_{W^{1,1}(0,T;\mathbb{L}^2(\mathcal{T}))\cap L^2(0,T;\mathbb{H}^1(\mathcal{T}))}+\|v\|_{\mathbb{B}_{0,T}}\right).
\end{align}
Using Proposition \eqref{W.P._s=0} we have
\begin{align*}
\|u\|_{\mathbb{B}_{0,T}}\leq C\left(\|u^0\|_{\mathbb{L}^2(\mathcal{T})}+\|g_0\|_{H^{-\frac{1}{3}}(0,T)}+\|g\|_{[L^2(0,T)]^N}+\|p\|_{[H^\frac{1}{3}(0,T)]^N}+\|f\|_{L^1(0,T;\mathbb{L}^2(\mathcal{T}))}\right)
\end{align*}
and
\begin{equation*}
\begin{split}
	\|v\|_{\mathbb{B}_{0,T}}\leq& C\left( \|-\partial_x u^0-\partial_x^3 u^0+f(0,\cdot)\|_{\mathbb{L}^2(\mathcal{T})}+\|g_0'\|_{H^{-\frac{1}{3}}(0,T)}\right.\\
		&\left.+\|g'\|_{\left[L^2(0,T)\right]^{N}}+\|p'\|_{[H^\frac{1}{3}(0,T)]^N}+\|\partial_t f\|_{L^1(0,T;\mathbb{L}^2(\mathcal{T}))}\right).
		\end{split}
\end{equation*}
Thus, the following inequality holds
\begin{equation}\label{eq 2.45}
\begin{split}
\|u\|_{\mathbb{B}_{0,T}}+\|v\|_{\mathbb{B}_{0,T}}\leq &C\left(\|u^0\|_{\mathbb{L}^2(\mathcal{T})}+\|\partial_x u^0\|_{\mathbb{L}^2(\mathcal{T})}+\|\partial_x^3 u^0\|_{\mathbb{L}^2(\mathcal{T})}+ \|f(0,\cdot)\|_{\mathbb{L}^2(\mathcal{T})}\right.\\
&+\|f\|_{L^1(0,T;\mathbb{L}^2(\mathcal{T}))}+\|\partial_t f\|_{L^1(0,T;\mathbb{L}^2(\mathcal{T}))}+\|g_0\|_{H^{-\frac{1}{3}}(0,T)}+\|g_0'\|_{H^{-\frac{1}{3}}(0,T)}\\
&\left.+\|g\|_{[L^2(0,T)]^N}+\|g'\|_{\left[L^2(0,T)\right]^{N}}+\|p\|_{[H^\frac{1}{3}(0,T)]^N}+\|p'\|_{[H^\frac{1}{3}(0,T)]^N}\right).
\end{split}
\end{equation}
Due to embedding $W^{1,1}(0,T;\mathbb{L}^2(\mathcal{T}))\hookrightarrow C([0,T];\mathbb{L}^2(\mathcal{T}))$ we have
\begin{align}\label{eq 2.47}
\|f(0,\cdot)\|_{\mathbb{L}^2(\mathcal{T})}\leq \|f\|_{C([0,T];\mathbb{L}^2(\mathcal{T}))}\leq C \|f\|_{W^{1,1}(0,T;\mathbb{L}^2(\mathcal{T}))}.
\end{align}
Now, due to embedding
\begin{align*}
H^{\frac{3-1}{3}}(0,T)\hookrightarrow L^2(0,T)\hookrightarrow H^{-\frac{1}{3}}(0,T)\quad \text{and}\quad H^{\frac{3+1}{3}}(0,T)\hookrightarrow H^\frac{1}{3}(0,T)
\end{align*}
the following inequalities hold
\begin{align}\label{eq 2.48}
\|g_0\|_{H^{-\frac{1}{3}}(0,T)}\leq C\|g_0\|_{H^{\frac{3-1}{3}}(0,T)}\quad \text{and}\quad \|p\|_{[H^\frac{1}{3}(0,T)]^N}\leq C \|p\|_{[H^{\frac{3+1}{3}}(0,T)]^N}.
\end{align}
Finally, since $\frac{3+1}{3}-1\geq 0$, $\frac{3-1}{3}>0$, $\frac{3-1}{3}-1<0$ and $\frac{3-1}{3}-\frac{1}{2}\notin \mathbb{Z}$
the operator $\frac{d}{dt}$ maps $H^\frac{3+1}{3}(0,T)$ (resp. $H^\frac{3-1}{3}(0,T))$ to  $H^{\frac{1}{3}}(0,T)$ (resp. $H^{-\frac{1}{3}}(0,T)$) continuously. Thus,
\begin{align}\label{eq 2.49}
	\|g_0'\|_{H^{-\frac{1}{3}}(0,T)}\leq c\|g_0\|_{H^{\frac{3-1}{3}}(0,T)}\quad \text{and}\quad \|p'\|_{[H^\frac{1}{3}(0,T)]^N}\leq c \|p\|_{[H^{\frac{3+1}{3}}(0,T)]^N}.
\end{align}
From \eqref{eq 2.44}-\eqref{eq 2.49} it follows that
\begin{equation*}
\begin{split}
\|u\|_{\mathbb{B}_{3,T}}\leq &C\left(\|u^0\|_{\mathbb{H}^3(\mathcal{T})}+\|g_0\|_{H^\frac{3-1}{3}(0,T)}+\|g\|_{[H^\frac{3}{3}(0,T)]^N}\right.\\
&\left.+\|p\|_{[H^\frac{3+1}{3}(0,T)]^N}+\|f\|_{W^{1,1}(0,T;\mathbb{L}^2(\mathcal{T}))\cap L^2(0,T;\mathbb{H}^1(\mathcal{T}))}\right).
\end{split}
\end{equation*}

\vspace{0.2 cm}
\noindent \textbf{iii.} Since $u \in \mathbb{B}_{3,T}$ we have, in particular, $u \in L^2(0,T;\mathbb{H}^1(\mathcal{T}))$ and $\partial_t u=-\partial_x u-\partial_x^3 u+f \in L^2(0,T;\mathbb{H}^1(\mathcal{T}))$. Therefore, $u \in H^1(0,T;\mathbb{H}^1(\mathcal{T}))$. Furthermore,
\begin{align*}
\|u\|_{H^1(0,T;\mathbb{H}^1(\mathcal{T}))}^2=&\|u\|_{L^2(0,T;\mathbb{H}^1(\mathcal{T}))}^2+\|\partial_t u\|_{L^2(0,T;\mathbb{H}^1(\mathcal{T}))}^2\\
=& \|u\|_{L^2(0,T;\mathbb{H}^1(\mathcal{T}))}^2+\|-\partial_x u-\partial_x^3 u+f\|_{L^2(0,T;\mathbb{H}^1(\mathcal{T}))}^2\\
\leq& C\left(\|u\|_{\mathbb{B}_{3,T}}^2+\|f\|_{L^2(0,T;\mathbb{H}^1(\mathcal{T}))}^2\right)\\
\leq& C \left(\|u^0\|_{\mathbb{H}^3(\mathcal{T})}+\|g_0\|_{H^\frac{3-1}{3}(0,T)}+\|g\|_{[H^\frac{3}{3}(0,T)]^N}\right.\\
&\left.+\|p\|_{[H^\frac{3+1}{3}(0,T)]^N}+\|f\|_{W^{1,1}(0,T;\mathbb{L}^2(\mathcal{T}))\cap L^2(0,T;\mathbb{H}^1(\mathcal{T}))}\right)^2.
\end{align*}
 \textbf{iv.} Since $u \in H^1(0,T;\mathbb{H}^1(\mathcal{T}))$ we have
$$
|\partial_t^k u_j(t,x)|\leq \|\partial_t^k u_j(t,\cdot)\|_{L^\infty(0,l_j)}\leq C\|\partial_t^k u_j(t,\cdot)\|_{H^1(0,l_j)}, \qquad k=0,1.$$
Taking the square and integrating this inequality we see that $u_j(\cdot,x)\in H^1(0,T)$ with
\begin{align}\label{50}
\|u_j(\cdot,x)\|_{H^1(0,T)}\leq C\|u\|_{H^1(0,T;\mathbb{H}^1(\mathcal{T}))},
\end{align}
for every $x \in (0,l_j)$. Moreover
\begin{align}\label{51}
\partial_t u_j(\cdot,x)=v_j(\cdot,x)\in H^\frac{1}{3}(0,T),\ \forall x \in (0,l_j).
\end{align}
From the theory of Sobolev spaces (see, for instance, \cite{Bhattacharyya}) it follows that $u_j\in L_x^\infty \left(0,T;H^\frac{3+1-0}{3}(0,T)\right)$. On the other hand $u_j \in C \left([0,T];H^3(0,l_j)\right)$ implies that
\begin{align}\label{52}
|\partial_x^k u_j(t,x)|\leq \|\partial_x^k u_j(t,\cdot)\|_{L^\infty(0,l_j)}\leq C\|\partial_x^k u_j(t,\cdot)\|_{H^1(0,l_j)}\leq C\|u_j(t,\cdot)\|_{H^3(0,l_j)}, \qquad k=1,2,
\end{align}
that is, $\partial_x^k u_j(\cdot,x)\in L^2(0,T)$. In addition,
\begin{align}\label{53}
\partial_t\partial_x^k u_j(\cdot,x)=\partial_x v_j(\cdot,x)\in H^{\frac{1-k}{3}}(0,T), \qquad k=1,2
\end{align}
and therefore $\partial_x^k u_j \in L_x^\infty \left(0,T;H^\frac{3+1-k}{3}(0,T)\right)$ for $k=1,2$. Finally, thanks to \eqref{50}-\eqref{53}, the trace estimates hold, concluding the proof.
\end{proof}

\subsection{The case $s \in (0,3)$}

Based on Propositions \ref{W.P._s=0} and \ref{W.P._s=3} and results from linear interpolation theory, we obtain the following corollary.
\begin{corollary}\label{W.P._s in [0,3]}
Let $T>0$ and $s \in (0,3)$ be. For any data
\begin{align*}
	u^0\in \mathbb{H}^s(\mathcal{T}),\quad g_0\in H^{\frac{s-1}{3}}(0,T),\quad g\in \left[H^\frac{s}{3}(0,T)\right]^{N},\quad\text{and}\quad p\in \left[H^{\frac{s+1}{3}}(0,T)\right]^N
\end{align*}
satisfying the linear compatibility conditions and any $f\in W^{\frac{s}{3},1}(0,T;\mathbb{L}^2(\mathcal{T}))\cap L^{\frac{6}{6-s}}(0,T;\mathbb{H}^\frac{s}{3}(\mathcal{T})),$
the problem \eqref{LkdV} admits a unique solution $u \in \mathbb{B}_{s,T}$ which satisfies
\begin{align*}
	\|u\|_{\mathbb{B}_{s,T}}\leq& C\left( \|u^0\|_{\mathbb{H}^s(\mathcal{T})}+\|g_0\|_{H^\frac{s-1}{3}(0,T)}+\|g\|_{[H^\frac{s}{3}(0,T)]^N}\right.\\
	&\left.+\|p\|_{[H^\frac{s+1}{3}(0,T)]^N}+\|f\|_{W^{\frac{s}{3},1}(0,T;\mathbb{L}^2(\mathcal{T}))\cap L^\frac{6}{6-s}(0,T;\mathbb{H}^\frac{s}{3}(\mathcal{T}))}	\right),
\end{align*}
for some positive constant $C$. Moreover, $u \in H^\frac{s}{3}(0,T;\mathbb{H}^1(\mathcal{T}))$ with
\begin{align*}
	\|u\|_{H^\frac{s}{3}(0,T;\mathbb{H}^1(\mathcal{T}))}\leq& C\left(\|u^0\|_{\mathbb{H}^s(\mathcal{T})}+\|g_0\|_{H^\frac{s-1}{3}(0,T)}+\|g\|_{[H^\frac{s}{3}(0,T)]^N}\right.\\
		&\left.+\|p\|_{[H^\frac{s+1}{3}(0,T)]^N}+\|f\|_{W^{\frac{s}{3},1}(0,T;\mathbb{L}^2(\mathcal{T}))\cap L^\frac{6}{6-s}(0,T;\mathbb{H}^\frac{s}{3}(\mathcal{T}))}	\right),
\end{align*}
for some positive constant $C$. 
\end{corollary}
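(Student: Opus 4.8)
The plan is to obtain the statement by linear interpolation between the two endpoint results already established: Proposition~\ref{W.P._s=0} (the case $s=0$) and Proposition~\ref{W.P._s=3} (the case $s=3$). Since \eqref{LkdV} is linear, the solution map $(u^0,g_0,g,p,f)\mapsto u$ is linear, and by the uniqueness assertions of those two propositions the solution produced by \eqref{u_j} at $s=3$ coincides with the mild solution $\Psi(u^0,g_0,g,p,f)$ of Proposition~\ref{W.P._s=0} whenever both make sense; so one is genuinely dealing with a single linear operator that is bounded at two endpoints. Throughout, write $\theta=\tfrac s3\in(0,1)$.

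First I would identify each space occurring at level $s$ as the interpolant, at parameter $\theta$, of its $s=0$ and $s=3$ versions. For the quadruple this is standard: $\mathbb{H}^s(\mathcal{T})=[\mathbb{L}^2(\mathcal{T}),\mathbb{H}^3(\mathcal{T})]_\theta$, $H^{\frac{s-1}{3}}(0,T)=[H^{-\frac13}(0,T),H^{\frac23}(0,T)]_\theta$, $[H^{\frac s3}(0,T)]^N=[[L^2(0,T)]^N,[H^{1}(0,T)]^N]_\theta$ and $[H^{\frac{s+1}{3}}(0,T)]^N=[[H^{\frac13}(0,T)]^N,[H^{\frac43}(0,T)]^N]_\theta$. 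For the forcing term, interpolating $L^1(0,T;\mathbb{L}^2(\mathcal{T}))$ with $W^{1,1}(0,T;\mathbb{L}^2(\mathcal{T}))\cap L^2(0,T;\mathbb{H}^1(\mathcal{T}))$ produces exactly $W^{\frac s3,1}(0,T;\mathbb{L}^2(\mathcal{T}))\cap L^{p}(0,T;\mathbb{H}^{\frac s3}(\mathcal{T}))$ with $\tfrac1p=(1-\theta)+\tfrac\theta2=1-\tfrac\theta2$, i.e.\ $p=\tfrac{6}{6-s}$, which is precisely the forcing class in the statement (using that interpolation of two intersecting scales yields the intersection of the interpolants, and the appropriate Besov scale for the $L^1$-in-time factor). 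Likewise, on the target side, $[\mathbb{B}_{0,T},\mathbb{B}_{3,T}]_\theta\hookrightarrow C([0,T];\mathbb{H}^s(\mathcal{T}))\cap L^2(0,T;\mathbb{H}^{s+1}(\mathcal{T}))=\mathbb{B}_{s,T}$, using that $L^2(0,T;\mathbb{H}^r(\mathcal{T}))$ interpolates linearly in $r$, that $[C([0,T];X_0),C([0,T];X_1)]_\theta\hookrightarrow C([0,T];[X_0,X_1]_\theta)$, and functoriality of interpolation with respect to finite intersections.

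The delicate point — and the step I expect to be the main obstacle — is the compatibility constraint. The set $\mathbb{X}_{s,T}$ of $s$-compatible data is a closed linear subspace of the ambient product Sobolev space cut out by finitely many trace identities, and the list of active identities jumps as $s$ crosses $\tfrac12,\tfrac32,\tfrac52$ (exactly as in the tables of Section~\ref{sec2}). One must show $[\mathbb{X}_{0,T},\mathbb{X}_{3,T}]_\theta=\mathbb{X}_{s,T}$, or at least the continuous inclusion $\mathbb{X}_{s,T}\hookrightarrow[\mathbb{X}_{0,T},\mathbb{X}_{3,T}]_\theta$, which is all that is needed for the estimate. This is an instance of the Lions--Magenes theorem on interpolation of subspaces defined by compatible trace maps: the interpolation scale automatically retains exactly those constraints that survive at regularity $s$, and the definition of $s$-compatibility in Section~\ref{sec2} was engineered so that this matches. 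Verifying the hypotheses amounts to checking that the relevant trace operators are surjective with continuous right inverses that are compatible across the scale — which is handled by the lifting constructions already used in Proposition~\ref{prop 2.2}.

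Granting these identifications, the argument concludes in a few lines. The solution operator is bounded $\mathbb{X}_{0,T}\times L^1(0,T;\mathbb{L}^2(\mathcal{T}))\to\mathbb{B}_{0,T}$ by Proposition~\ref{W.P._s=0} and bounded $\mathbb{X}_{3,T}\times\big(W^{1,1}(0,T;\mathbb{L}^2(\mathcal{T}))\cap L^2(0,T;\mathbb{H}^1(\mathcal{T}))\big)\to\mathbb{B}_{3,T}$ by Proposition~\ref{W.P._s=3}; interpolation at parameter $\theta$ then yields boundedness from the level-$s$ data class into $[\mathbb{B}_{0,T},\mathbb{B}_{3,T}]_\theta\hookrightarrow\mathbb{B}_{s,T}$, with the asserted estimate read off from the interpolated operator norm. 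The extra regularity $u\in H^{\frac s3}(0,T;\mathbb{H}^1(\mathcal{T}))$ and its estimate follow by the same interpolation between $L^2(0,T;\mathbb{H}^1(\mathcal{T}))$ (which is controlled by the $s=0$ bound since $\mathbb{B}_{0,T}\hookrightarrow L^2(0,T;\mathbb{H}^1(\mathcal{T}))$) and $H^1(0,T;\mathbb{H}^1(\mathcal{T}))$ (Proposition~\ref{W.P._s=3}\,(iii)), because $[L^2(0,T;\mathbb{H}^1(\mathcal{T})),H^1(0,T;\mathbb{H}^1(\mathcal{T}))]_\theta=H^{\frac s3}(0,T;\mathbb{H}^1(\mathcal{T}))$. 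Finally, uniqueness in $\mathbb{B}_{s,T}$ is inherited from level $0$: any solution in $\mathbb{B}_{s,T}\subset\mathbb{B}_{0,T}$ is the mild solution of Proposition~\ref{W.P._s=0}, which is unique.
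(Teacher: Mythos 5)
Your proposal follows exactly the paper's route: the paper obtains this corollary by linear interpolation between Proposition \ref{W.P._s=0} (the $s=0$ case) and Proposition \ref{W.P._s=3} (the $s=3$ case), which is precisely what you do, and your additional care about identifying the interpolated data and target spaces (including the compatibility-constrained subspaces and the $H^{\frac{s}{3}}(0,T;\mathbb{H}^1(\mathcal{T}))$ regularity) fills in details the paper leaves implicit. No discrepancy in approach; your argument is a faithful, somewhat more detailed version of the paper's.
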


In fact, we can prove the well-posedness for the slightly more general problem
\begin{align}\label{LkdV-a}
	\left\{
	\begin{array}{ll}
		\partial_t u_j+\partial_x [(1+a_j)u_j]+\partial_x^3 u_j=f_j,&t \in (0,T),\ x \in (0,l_j),\ j=1,...,N,\\
		u_j(t,0)=u_1(t,0),&t\in (0,T),\ j=1,...,N,\\
		\displaystyle\sum_{j=1}^N\partial_x^2 u_j(t,0)=-\alpha u_1(t,0)+g_0(t),&t \in (0,T),\\				
		u_j(t,l_j)=p_j(t),\ \ \ \ \ \partial_x u_j(t,l_j)=g_j(t),&t\in (0,T),\ j=1,...,N,\\
		u_j(0,x)=u_j^0(x),&x \in (0,l_j),\ j=1,...,N.
	\end{array}
	\right.
\end{align}
To do this, it is convenient to recall the space
\begin{align*}
	\mathbb{B}_{s,T}^*:=\mathbb{B}_{s,T}\cap H^\frac{s}{3}(0,T;\mathbb{H}^1(\mathcal{T}))
\end{align*}
with the norm of the sum. The following Lemma is a consequence of \cite[Lemma 3]{Kramer and Zhang 2010} together with results of interpolation theory. 
\begin{lemma}\label{lemma 2.4} Let $T>0$ be given.
	\begin{enumerate}
		\item [i.] Let $s\geq 0$. There exists a constant $C>0$ such that for any $u,v \in \mathbb{B}_{s,T}$ one have $\partial_xu v\in L^1(0,T;\mathbb{H}^s(\mathcal{T}))$ with
		\begin{align*}
			\int_0^T\|\partial_xu v\|_{\mathbb{H}^s(\mathcal{T})}dt\leq C(T^\frac{1}{2}+T^\frac{1}{3})\|u\|_{\mathbb{B}_{s,T}}\|v\|_{\mathbb{B}_{s,T}}.
		\end{align*}
		\item [ii.] Let $s \in [0,3]$. There exists a constant $C>0$ such that, for any $u,v\in \mathbb{B}_{s,T}$, we have $\partial_xu v\in L^\frac{6}{6-s}(0,T;\mathbb{H}^\frac{s}{3}(\mathcal{T}))$ with
		\begin{align*}
			\int_0^T\|\partial_xu v\|_{\mathbb{H}^\frac{s}{3}(\mathcal{T})}^\frac{6}{6-s}&\leq C(T^\frac{1}{2}+T^\frac{1}{3})\|u\|_{\mathbb{B}_{s,T}}^\frac{6}{6-s}\|v\|_{\mathbb{B}_{s,T}}^\frac{6}{6-s}.
		\end{align*}
		\item [iii.] Let $s \in [0,3]$.  There exists a constant $C>0$ such that, for any $u,v\in \mathbb{B}_{s,T}^*$, we have $\partial_xu v \in W^{\frac{s}{3},1}(0,T;\mathbb{L}^2(\mathcal{T}))$ with
		\begin{align*}
			\|\partial_xu v\|_{W^{\frac{s}{3},1}(0,T;\mathbb{L}^2(\mathcal{T}))}\leq C(T^\frac{1}{2}+T^\frac{1}{3})\|u\|_{\mathbb{B}_{s,T}^*}\|v\|_{\mathbb{B}_{s,T}^*}.
		\end{align*}
	\end{enumerate}	
\end{lemma}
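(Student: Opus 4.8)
\medskip
\noindent\textbf{Proof strategy.} The plan is to reduce the three graph estimates to their one–interval analogues, which are essentially \cite[Lemma 3]{Kramer and Zhang 2010}, and to fill in the non‑endpoint values of $s$ by bilinear complex interpolation.

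\medskip
\noindent\textbf{Step 1: reduction to a single edge.} First I would exploit the Cartesian product structure of every space involved: $\mathbb{H}^s(\mathcal{T})=\prod_{j=1}^N H^s(0,l_j)$, $\mathbb{B}_{s,T}=\prod_{j=1}^N Z_{s,T}^{(j)}$ with $Z_{s,T}^{(j)}:=C([0,T];H^s(0,l_j))\cap L^2(0,T;H^{s+1}(0,l_j))$, and $\mathbb{B}_{s,T}^*=\prod_{j=1}^N\big(Z_{s,T}^{(j)}\cap H^{s/3}(0,T;H^1(0,l_j))\big)$, together with the fact that $\partial_xu\,v=\big((\partial_xu_1)v_1,\dots,(\partial_xu_N)v_N\big)$ is taken componentwise. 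Using $\|\cdot\|_{\ell^2}\le\|\cdot\|_{\ell^1}$ over the edge index, estimate (i) would follow from
\begin{align*}
\int_0^T\|\partial_xu\,v\|_{\mathbb{H}^s(\mathcal{T})}\,dt\;\le\;\sum_{j=1}^N\int_0^T\|(\partial_xu_j)v_j\|_{H^s(0,l_j)}\,dt,
\end{align*}
and similarly (ii) and (iii) would reduce to sums over $j$ of the corresponding one–interval quantities, via elementary inequalities between the $\ell^p$–norms on the finite edge index. A final application of Cauchy–Schwarz, $\sum_j\|u_j\|\,\|v_j\|\le\big(\sum_j\|u_j\|^2\big)^{1/2}\big(\sum_j\|v_j\|^2\big)^{1/2}$ (and its $\ell^p$ variants for (ii), (iii)), then produces the factor $\|u\|_{\mathbb{B}_{s,T}}\|v\|_{\mathbb{B}_{s,T}}$, respectively $\|u\|_{\mathbb{B}_{s,T}^*}\|v\|_{\mathbb{B}_{s,T}^*}$, on the right.

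\medskip
\noindent\textbf{Step 2: the one–interval estimate.} On a fixed interval $(0,L)$, the three bilinear estimates — $\partial_xu\,v\in L^1(0,T;H^s(0,L))$ for $s\ge0$, $\partial_xu\,v\in L^{6/(6-s)}(0,T;H^{s/3}(0,L))$ for $s\in[0,3]$, and $\partial_xu\,v\in W^{s/3,1}(0,T;L^2(0,L))$ for $s\in[0,3]$, all carrying the $(T^{1/2}+T^{1/3})$ bilinear bound — are precisely \cite[Lemma 3]{Kramer and Zhang 2010} at the endpoint values of $s$. For $s\in(0,3)$ I would interpolate: identify $Z_{s,T}$ with $[Z_{0,T},Z_{3,T}]_{s/3}$, and $Z_{s,T}\cap H^{s/3}(0,T;H^1(0,L))$ with the analogous interpolant, using that $C([0,T];H^\sigma)$ and $L^2(0,T;H^\sigma)$ interpolate in $\sigma$ and that complex interpolation is compatible with the intersections at hand; then apply the bilinear (Calderón) complex interpolation theorem to $(u,v)\mapsto\partial_xu\,v$ between the $s=0$ and $s=3$ estimates, using
\begin{align*}
[L^1(0,T;L^2),L^1(0,T;H^3)]_{s/3}=L^1(0,T;H^s),\qquad
[L^1(0,T;L^2),L^2(0,T;H^1)]_{s/3}=L^{6/(6-s)}(0,T;H^{s/3}),
\end{align*}
and $[W^{0,1}(0,T;L^2),W^{1,1}(0,T;L^2)]_{s/3}=W^{s/3,1}(0,T;L^2)$.

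\medskip
\noindent\textbf{Main obstacle.} The only genuinely non‑routine point is the interpolation in Step 2: one must check that the \emph{intersection} spaces $Z_{s,T}$ and $Z_{s,T}\cap H^{s/3}(0,T;H^1(0,L))$ really are the complex interpolants of their $s=0$ and $s=3$ versions — so that the bilinear interpolation theorem applies with matching source and target scales — and that the auxiliary time–space identifications above preserve uniform‑in‑$T$ constants of the form $C(T^{1/2}+T^{1/3})$. Once \cite[Lemma 3]{Kramer and Zhang 2010} is granted (it already furnishes the endpoint cases, and in fact most of the range) together with these standard interpolation facts, Step 1 delivers all three graph estimates.
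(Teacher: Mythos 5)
Your proposal follows essentially the same route as the paper: the paper gives no detailed argument, stating only that the lemma ``is a consequence of \cite[Lemma 3]{Kramer and Zhang 2010} together with results of interpolation theory,'' which is exactly your combination of the single-interval bilinear estimates of Kramer--Zhang with interpolation in $s$, plus the routine componentwise reduction over the $N$ edges. Your fleshing-out of the edge-index reduction and of the interpolation identities (e.g.\ $[L^1(0,T;L^2),L^2(0,T;H^1)]_{s/3}=L^{6/(6-s)}(0,T;H^{s/3})$) is consistent with what the paper implicitly relies on.
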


The next result ensures the uniqueness of the problem \eqref{LkdV-a}.
\begin{proposition}\label{W.P.LKdV-a}
Let $T>0$, $s \in [0,3]$ and $a \in \mathbb{B}_{s,T}^*$ be. For any data
\begin{align*}
	u^0\in \mathbb{H}^s(\mathcal{T}),\quad g_0\in H^{\frac{s-1}{3}}(0,T),\quad g\in \left[H^\frac{s}{3}(0,T)\right]^{N},\quad\text{and}\quad p\in \left[H^{\frac{s+1}{3}}(0,T)\right]^N
\end{align*}
satisfying the linear compatibility conditions and $f\in W^{\frac{s}{3},1}(0,T;\mathbb{L}^2(\mathcal{T}))\cap L^{\frac{6}{6-s}}(0,T;\mathbb{H}^\frac{s}{3}(\mathcal{T})),$ the problem \eqref{LkdV-a} admits a unique solution $u \in \mathbb{B}_{s,T}^*$ which satisfies
\begin{align*}
	\|u\|_{\mathbb{B}_{s,T}^*}\leq& C\left(\|u^0\|_{\mathbb{H}^s(\mathcal{T})}+\|g_0\|_{H^\frac{s-1}{3}(0,T)}+\|g\|_{[H^\frac{s}{3}(0,T)]^N}\right.\\
&\left.+\|p\|_{[H^\frac{s+1}{3}(0,T)]^N}+\|f\|_{W^{\frac{s}{3},1}(0,T;\mathbb{L}^2(\mathcal{T}))\cap L^\frac{6}{6-s}(0,T;\mathbb{H}^\frac{s}{3}(\mathcal{T}))}\right),
\end{align*}
for some positive constant $C$ depending on $\|a\|_{\mathbb{B}_{s,T}^*}$.
\end{proposition}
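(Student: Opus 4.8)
The plan is to solve \eqref{LkdV-a} by a contraction mapping argument in $\mathbb{B}_{s,T}^*$ on a short time interval, treating the coupling term $\partial_x(a_j u_j)$ as a perturbative forcing for the linear system \eqref{LkdV}, and then to reach the full interval $[0,T]$ by a continuation (bootstrap-in-time) argument. First I would rewrite $\partial_x[(1+a_j)u_j]=\partial_x u_j+\partial_x(a_j u_j)$, so that a solution of \eqref{LkdV-a} on $(0,\delta)$ is exactly a fixed point of the map $\Gamma$ sending $w\in\mathbb{B}_{s,\delta}^*$ to the solution of \eqref{LkdV} on $(0,\delta)$ with the same data $(u^0,g_0,g,p)$ and forcing $f_j-\partial_x(a_j w_j)$, produced by Corollary \ref{W.P._s in [0,3]} for $s\in(0,3)$ and by Propositions \ref{W.P._s=0} and \ref{W.P._s=3} for $s=0$ and $s=3$. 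For $\Gamma$ to be well defined I must check two things: that the data $(u^0,g_0,g,p)$ still obey the linear $s$-compatibility conditions required by the linear theory, and that $f-\partial_x(aw)$ belongs to $W^{\frac{s}{3},1}(0,\delta;\mathbb{L}^2(\mathcal{T}))\cap L^{\frac{6}{6-s}}(0,\delta;\mathbb{H}^{\frac{s}{3}}(\mathcal{T}))$. The first is immediate: the boundary conditions of \eqref{LkdV-a} at $x=0$ and at $x=l_j$ are identical to those of \eqref{LkdV}, and for $s\le 3$ only the order-zero compatibility relations (which involve solely the data, not the forcing) are active, so they coincide with the linear compatibility conditions assumed in the statement.

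For the second point, I would split $\partial_x(a_j w_j)=(\partial_x a_j)\,w_j+(\partial_x w_j)\,a_j$ and apply items (ii) and (iii) of Lemma \ref{lemma 2.4} to each of the two products, obtaining
\begin{align*}
\|\partial_x(aw)\|_{W^{\frac{s}{3},1}(0,\delta;\mathbb{L}^2(\mathcal{T}))}+\|\partial_x(aw)\|_{L^{\frac{6}{6-s}}(0,\delta;\mathbb{H}^{\frac{s}{3}}(\mathcal{T}))}\le C\bigl(\delta^{\frac12}+\delta^{\frac13}\bigr)\,\|a\|_{\mathbb{B}_{s,\delta}^*}\,\|w\|_{\mathbb{B}_{s,\delta}^*}.
\end{align*}
Since $\Gamma$ is affine in $w$, the difference $\Gamma w-\Gamma\tilde w$ solves \eqref{LkdV} with zero data and forcing $-\partial_x\bigl(a(w-\tilde w)\bigr)$, so the linear estimate of Corollary \ref{W.P._s in [0,3]} combined with the inequality above gives $\|\Gamma w-\Gamma\tilde w\|_{\mathbb{B}_{s,\delta}^*}\le C\bigl(\delta^{\frac12}+\delta^{\frac13}\bigr)\|a\|_{\mathbb{B}_{s,T}^*}\|w-\tilde w\|_{\mathbb{B}_{s,\delta}^*}$. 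Choosing $\delta=\delta(\|a\|_{\mathbb{B}_{s,T}^*})\in(0,T]$ so that $C(\delta^{\frac12}+\delta^{\frac13})\|a\|_{\mathbb{B}_{s,T}^*}\le\frac12$ makes $\Gamma$ a contraction of $\mathbb{B}_{s,\delta}^*$, hence the Banach fixed point theorem yields a unique $u\in\mathbb{B}_{s,\delta}^*$ solving \eqref{LkdV-a} on $(0,\delta)$; feeding $\|u\|\le\|\Gamma 0\|+\frac12\|u\|$ back into the linear bound gives the asserted estimate on $(0,\delta)$ with a constant independent of the data.

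To propagate this to $[0,T]$, I would note that $\delta$ depends only on $T$ and $\|a\|_{\mathbb{B}_{s,T}^*}$, not on the data, and restart the argument on $[\delta,2\delta]$ with initial datum $u(\delta,\cdot)\in\mathbb{H}^s(\mathcal{T})$ (available since $\mathbb{B}_{s,\delta}^*\subset C([0,\delta];\mathbb{H}^s(\mathcal{T}))$) and with the boundary data restricted to $(\delta,2\delta)$. These restricted data automatically satisfy the $s$-compatibility conditions at $t=\delta$, because $u$ already solves \eqref{LkdV-a} on $(0,\delta)$ and restrictions of the boundary functions remain in the relevant $H^{\cdot}$-spaces. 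After $\lceil T/\delta\rceil$ such steps the interval $[0,T]$ is covered, giving $u\in\mathbb{B}_{s,T}^*$, and concatenating the short-interval estimates (at each step the new initial-datum norm is dominated by the sup-in-time $\mathbb{H}^s(\mathcal{T})$-norm of $u$ on the previous subinterval) produces the global bound with a constant depending on $T$ and $\|a\|_{\mathbb{B}_{s,T}^*}$.

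Uniqueness on $[0,T]$ then follows by induction over the subintervals from the uniqueness of the fixed point of $\Gamma$ on each of them. The step I expect to be the main obstacle is the verification that the perturbation $\partial_x(aw)$ lands in precisely the forcing space used by the linear theory; this is exactly where items (ii)--(iii) of Lemma \ref{lemma 2.4} are indispensable, and it is the reason the analysis is carried out in $\mathbb{B}_{s,T}^*=\mathbb{B}_{s,T}\cap H^{\frac{s}{3}}(0,T;\mathbb{H}^1(\mathcal{T}))$ rather than in $\mathbb{B}_{s,T}$: the extra $H^{\frac{s}{3}}(0,T;\mathbb{H}^1(\mathcal{T}))$ control on $w$ and $a$ is what allows the time regularity ($W^{\frac{s}{3},1}$ in $t$ with values in $\mathbb{L}^2(\mathcal{T})$) of $(\partial_x a)w$ and $(\partial_x w)a$ to be estimated.
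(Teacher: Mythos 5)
Your proposal is correct and follows essentially the same route as the paper: a fixed-point argument in $\mathbb{B}_{s,\theta}^*$ treating $\partial_x(a_jw_j)$ as a forcing term for the linear problem, with Corollary \ref{W.P._s in [0,3]} and Lemma \ref{lemma 2.4} supplying the key estimates, a time step $\theta$ depending only on $\|a\|_{\mathbb{B}_{s,T}^*}$, and a continuation argument to cover $[0,T]$. The only cosmetic difference is that you exploit the affinity of the map to contract on the whole space, whereas the paper sets up an invariant ball of radius $r$; the substance of the argument is identical.
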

\begin{proof}
Consider $\theta\in [0,T]$ and the map $\Gamma_a:\mathbb{B}_{s,\theta}^*\rightarrow \mathbb{B}_{s,\theta}^*$ where, for each $v \in \mathbb{B}_{s,\theta}^*$, $\Gamma_a v$ is the solution for
\begin{align*}
	\begin{cases}
			\partial_t u_j+\partial_x u_j+\partial_x^3 u_j=f_j-\partial_x (a_jv_j),&t \in (0,\theta),\ x \in (0,l_j),\ j=1,...,N,\\
			u_j(t,0)=u_1(t,0),&t\in (0,\theta),\ j=1,...,N,\\
			\displaystyle\sum_{j=1}^N\partial_x^2 u_j(t,0)=-\alpha u_1(t,0)+g_0(t),&t \in (0,\theta),\\				
			u_j(t,l_j)=p_j(t),\ \ \ \ \ \partial_x u_j(t,l_j)=g_j(t),&t\in (0,\theta),\ j=1,...,N,\\
			u_j(0,x)=u_j^0(x),&x \in (0,l_j),\ j=1,...,N.
		\end{cases}
\end{align*}
Thanks to Corollary \ref{W.P._s in [0,3]} and Lemma \ref{lemma 2.4}, $\Gamma_a$ is well-defined. Moreover, defining
\begin{align*}
r=&2C\left(	\|u^0\|_{\mathbb{H}^s(\mathcal{T})}+\|g_0\|_{H^\frac{s-1}{3}(0,T)}+\|g\|_{[H^\frac{s}{3}(0,T)]^N}\right.\\
&\left.+\|p\|_{[H^\frac{s+1}{3}(0,T)]^N}+\|f\|_{W^{\frac{s}{3},1}(0,T;\mathbb{L}^2(\mathcal{T}))\cap L^\frac{6}{6-s}(0,T;\mathbb{H}^1(\mathcal{T}))}\right)
\end{align*}
we have
\begin{align*}
\|\Gamma_a v\|_{\mathbb{B}_{s,\theta}^*}\leq&C\left(\|u^0\|_{\mathbb{H}^s(\mathcal{T})}+\|g_0\|_{H^\frac{s-1}{3}(0,\theta)}+\|g\|_{[H^\frac{s}{3}(0,T)]^N}\right.\\
&\left.+\|p\|_{[H^\frac{s+1}{3}(0,\theta)]^N}+\|f-\partial_x(av)\|_{W^{\frac{s}{3},1}(0,\theta;\mathbb{L}^2(\mathcal{T}))\cap L^\frac{6}{6-s}(0,\theta;\mathbb{H}^1(\mathcal{T}))}\right)\\
\leq & \frac{r}{2}+C\|\partial_x(av)\|_{W^{\frac{s}{3},1}(0,\theta;\mathbb{L}^2(\mathcal{T}))\cap L^\frac{6}{6-s}(0,\theta;\mathbb{H}^1(\mathcal{T}))}\\
=&\frac{r}{2}+C\left(\|\partial_x(av)\|_{W^{\frac{s}{3},1}(0,\theta;\mathbb{L}^2(\mathcal{T}))}+\|\partial_x(av)\|_{L^\frac{6}{6-s}(0,\theta;\mathbb{H}^1(\mathcal{T}))}\right)\\
\leq &\frac{r}{2}+C\left((\theta^\frac{1}{2}+\theta^\frac{1}{3})\|a\|_{\mathbb{B}_{s,\theta}^*}\|v\|_{\mathbb{B}_{s,\theta}^*}+(\theta^\frac{1}{2}+\theta^\frac{1}{3})\|a\|_{\mathbb{B}_{s,\theta}}\|v\|_{\mathbb{B}_{s,\theta}}\right)\\
\leq& \frac{r}{2}+C(\theta^\frac{1}{2}+\theta^\frac{1}{3})\|a\|_{\mathbb{B}_{s,T}^*}\cdot r.
\end{align*}
Choosing $\theta\in (0,T)$ such that
\begin{align*}
C(\theta^\frac{1}{2}+\theta^\frac{1}{3})\|a\|_{\mathbb{B}_{s,T}^*}<\frac{1}{2}
\end{align*}
we have $\Gamma_a(B)\subset B$ where $B=\{v \in \mathbb{B}_{s,\theta}^*;\ \|v\|_{\mathbb{B}_{s,\theta}^*}\leq r\}.$ Moreover $\Gamma_a$ is a contraction  on $B$ since
\begin{align*}
\|\Gamma_a v-\Gamma_a w\|_{\mathbb{B}_{s,\theta}^*}&\leq C(\theta^\frac{1}{2}+\theta^\frac{1}{3})\|a\|_{\mathbb{B}_{s,T}^*}\|v-w\|_{\mathbb{B}_{s,\theta}^*}
\end{align*}
Now, using the Banach fixed point theorem, it follows that $\Gamma_a$ has a fixed point $u \in B$, that is, $u$ solves \eqref{LkdV-a} for $t \in [0,\theta]$ and $\|u\|_{\mathbb{B}_{s,\theta}^*}\leq r$. Since $\theta$ does not depend on the data $(u^0,g_0,g,p,f)$, using a standard continuation extension argument, the solution $u$ can be extended to the interval $[0,T]$ and the following estimate holds
\begin{align*}
	\|u\|_{\mathbb{B}_{s,T}^*}\leq& C\left(\|u^0\|_{\mathbb{H}^s(\mathcal{T})}+\|g_0\|_{H^\frac{s-1}{3}(0,T)}+\|g\|_{[H^\frac{s}{3}(0,T)]^N}\right.\\
&\left.+\|p\|_{[H^\frac{s+1}{3}(0,T)]^N}+\|f\|_{W^{\frac{s}{3},1}(0,T;\mathbb{L}^2(\mathcal{T}))\cap L^\frac{6}{6-s}(0,T;\mathbb{H}^\frac{s}{3}(\mathcal{T}))}	\right),
\end{align*}
where the constant $C$ may depend  on $\|a\|_{\mathbb{B}_{s,T}^*}$ through the number of sub-intervals of size $\theta$ needed to cover $[0,T]$.
\end{proof}
\begin{remark} By using induction, one can verify the results of this section for $s=3k, k \in \mathbb{N}$. Indeed, assuming its validity for $s=3k$ and using the same reasoning as in the Proposition \ref{W.P._s=3}, the same results are obtained for $s=3(k+1)$. With this in hand, given any $s>0$, consider $k\in \mathbb{N}$ such that $s \in [3k,3(k+1)]$ and use the same argument of interpolation as in the Corollary \ref{W.P._s in [0,3]} to get the results in $(3k,3(k+1))$ and, consequently, for $s$.
\end{remark}

\section{Local well-posedness of the nonlinear problem}\label{sec4}
Now, we are in a position to prove the first main result of the article, namely, the local well-posedness of the following problem
\begin{align}\label{NLKdV}
	\left\{
	\begin{array}{ll}
		\partial_t u_j+\partial_x [(a_j+1)u_j]+\partial_x^3 u_j+u_j\partial_x u_j=f_j,&t \in (0,T),\ x \in (0,l_j),\\
		u_j(t,0)=u_1(t,0),&t\in (0,T),\\
		\displaystyle\sum_{j=1}^N\partial_x^2 u_j(t,0)=-\alpha u_1(t,0)-\frac{N}{3}(u_1+a_1)^2(t,0)+g_0(t),&t \in (0,T)\\				
		u_j(t,l_j)=p_j(t),\ \ \ \ \ \partial_x u_j(t,l_j)=g_j(t),&t\in (0,T),\\
		u_j(0,x)=u_j^0(x),&x \in (0,l_j),
	\end{array}
	\right.
\end{align}
where $j=1,...,N$ and $a$ is a given function. Remember that: for $s\geq 0$ and $T>0$ the space
\begin{align*}
\mathcal{X}_{s,T}=\left\{v \in \mathbb{B}_{s,T}^*;\ \partial_x^ku_j\in L_x^\infty(0,l_j;H^\frac{s+1-k}{3}(0,T)),\ k=0,1,2,\ j=1,2,...,N\right\}
\end{align*}
is a Banach space with the norm
\begin{align*}
\|v\|_{\mathcal{X}_{s,T}}=\|v\|_{\mathbb{B}_{s,T}^*}+\sum_{k=0}^2\|\partial_x^kv\|_{\prod_{j=1}^NL_x^\infty(0,l_j;H^\frac{s+1-k}{3}(0,T))}.
\end{align*}
Note that \eqref{kdV} is a particular case of \eqref{NLKdV} with $a=0$.
\begin{proof}[Proof of Theorem \ref{main}] Fix $\theta \in (0,T)$ and define $\Gamma_a:\mathcal{X}_{s,\theta}\rightarrow \mathcal{X}_{s,\theta}$ putting, for each $u \in \mathcal{X}_{s,\theta}$, $\Gamma_au$ as being the solution $v$ of
\begin{align*}
	\left\{
	\begin{array}{ll}
		\partial_t v_j+\partial_x [(a_j+1)v_j]+\partial_x^3 v_j=-u_j\partial_x u_j+f_j,&t \in (0,\theta),\ x \in (0,l_j),\ j=1,...,N,\\
		v_j(t,0)=v_1(t,0),&t\in (0,\theta),\ j=1,...,N,\\
		\displaystyle\sum_{j=1}^N\partial_x^2 v_j(t,0)=-\alpha v_1(t,0)-\frac{N}{3}(u_1+a_1)^2(t,0)+g_0(t),&t \in (0,\theta,)\\				
		v_j(t,l_j)=p_j(t),\ \ \ \ \ \partial_x v_j(t,l_j)=g_j(t),&t\in (0,\theta),\ j=1,...,N,\\
		v_j(0,x)=u_j^0(x),&x \in (0,l_j),\ j=1,...,N.
	\end{array}
	\right.
\end{align*}
From Proposition \ref{W.P.LKdV-a} we have
\begin{align*}
&	\|\Gamma_au\|_{\mathcal{X}_{s,\theta}}\leq C\left(\|u^0\|_{\mathbb{H}^s(\mathcal{T})}+\|g_0-\frac{N}{3}(u_1+a_1)^2(\cdot,0)\|_{H^\frac{s-1}{3}(0,\theta)}+\|g\|_{[H^\frac{s}{3}(0,\theta)]^N}\right.\\
&\left.+\|p\|_{[H^\frac{s+1}{3}(0,\theta)]^N}+\|u\partial_xu\|_{W^{\frac{s}{3},1}(0,\theta;\mathbb{L}^2(\mathcal{T}))\cap L^\frac{6}{6-s}(0,\theta;\mathbb{H}^\frac{s}{3}(\mathcal{T}))}+\|f\|_{W^{\frac{s}{3},1}(0,\theta;\mathbb{L}^2(\mathcal{T}))\cap L^\frac{6}{6-s}(0,\theta;\mathbb{H}^\frac{s}{3}(\mathcal{T}))}	\right).
\end{align*}
Using Lemma \ref{lemma 2.4} we obtain
\begin{align*}
	\|\partial_xu v\|_{W^{\frac{s}{3},1}(0,\theta;\mathbb{L}^2(\mathcal{T}))\cap L^\frac{6}{6-s}(0,\theta;\mathbb{H}^\frac{s}{3}(\mathcal{T}))}\leq C(\theta^\frac{1}{2}+\theta^\frac{1}{3})\|u\|_{\mathbb{B}_{s,\theta}^*}\|u\|_{\mathbb{B}_{s,\theta}^*}.
\end{align*}
Furthermore, Lemma \ref{lemma 2.1} gives us
\begin{align*}
\left\|\frac{N}{3}(u_1+a_1)^2(\cdot,0)\right\|_{H^\frac{s-1}{3}(0,\theta)}&\leq  C\theta^\alpha\frac{N}{3}\left\|(u_1+a_1)(\cdot,0)\right\|_{H^\frac{s+1}{3}(0,\theta)}^2\\
&\leq C\theta^\alpha\frac{N}{3}\|u+a\|_{\mathcal{X}_{s,\theta}}^2\\
&\leq \frac{4}{3}CN\theta^\alpha \left(\|u\|_{\mathcal{X}_{s,\theta}}^2+\|a\|_{\mathcal{X}_{s,\theta}}^2\right),
\end{align*}
where $\alpha>0$ is a constant. Thus,
\begin{align*}
	\|\Gamma_au\|_{\mathcal{X}_{s,\theta}}\leq& C\left(\|u^0\|_{\mathbb{H}^s(\mathcal{T})}+\|g_0\|_{H^\frac{s-1}{3}(0,T)}+\|g\|_{[H^\frac{s}{3}(0,T)]^N}\right.\\
&+\|p\|_{[H^\frac{s+1}{3}(0,T)]^N}+\|f\|_{W^{\frac{s}{3},1}(0,T;\mathbb{L}^2(\mathcal{T}))\cap L^\frac{6}{6-s}(0,T;\mathbb{H}^\frac{s}{3}(\mathcal{T}))}\\
&\left.+C(\theta^\frac{1}{2}+\theta^\frac{1}{3})\|u\|_{\mathcal{X}_{s,\theta}}^2+\frac{4}{3}CN\theta^\alpha \|u\|_{\mathcal{X}_{s,\theta}}^2+\frac{4}{3}CN\theta^\alpha\|a\|_{\mathcal{X}_{s,\theta}}^2	\right).
\end{align*}
Defining
\begin{align*}
r=&4C\left(\|u^0\|_{\mathbb{H}^s(\mathcal{T})}+\|g_0\|_{H^\frac{s-1}{3}(0,T)}+\|g\|_{[H^\frac{s}{3}(0,T)]^N}\right.\\
&\left.+\|p\|_{[H^\frac{s+1}{3}(0,T)]^N}+\|f\|_{W^{\frac{s}{3},1}(0,T;\mathbb{L}^2(\mathcal{T}))\cap L^\frac{6}{6-s}(0,T;\mathbb{H}^\frac{s}{3}(\mathcal{T}))}\right),
\end{align*}
it follows that, for $u \in B=\{v \in \mathcal{X}_{s,\theta};\ \|v\|_{\mathcal{X}_{s,\theta}}\leq r\}$,
\begin{align}\label{eq 3.2}
\|\Gamma_au\|_{\mathcal{X}_{s,\theta}}\leq \frac{r}{4}+C^2(\theta^\frac{1}{2}+\theta^\frac{1}{3})r^2+\frac{4}{3}C^2N\theta^\alpha r^2+\frac{4}{3}C^2N\theta^\alpha\|a\|_{\mathcal{X}_{s,T}}^2.
\end{align}

On the other hand
\begin{align*}
	\|\Gamma_au-\Gamma_aw\|_{\mathcal{X}_{s,\theta}}\leq& C\left(\|-\frac{N}{3}(u_1+a_1)^2(\cdot,0)+\frac{N}{3}(w_1+a_1)^2(\cdot,0)\|_{H^\frac{s-1}{3}(0,\theta)}\right.\\&\left.+\|-u\partial_xu+w\partial_xw\|_{W^{\frac{s}{3},1}(0,\theta;\mathbb{L}^2(\mathcal{T}))\cap L^\frac{6}{6-s}(0,\theta;\mathbb{H}^\frac{s}{3}(\mathcal{T}))}\right)\\
	=&C\left(\frac{N}{3}\|(u_1+a_1)^2(\cdot,0)-(w_1+a_1)^2(\cdot,0)\|_{H^\frac{s-1}{3}(0,\theta)}\right.\\
&\left.+\|u\partial_xu-w\partial_xw\|_{W^{\frac{s}{3},1}(0,\theta;\mathbb{L}^2(\mathcal{T}))\cap L^\frac{6}{6-s}(0,\theta;\mathbb{H}^\frac{s}{3}(\mathcal{T}))}\right).
\end{align*}
Note that
\begin{align*}
	(u_1+a_1)^2(\cdot,0)-(w_1+a_1)^2(\cdot,0)&=\left[\big((u_1+a_1)+(w_1+a_1)\big)\cdot\big((u_1+a_1)-(w_1+a_1)\big)\right](\cdot,0)\\
	&=\left[(u_1+w_1+2a_1)\cdot(u_1-w_1)\right](\cdot,0).
\end{align*}
Hence, Lemma \ref{lemma 2.1} provides
\begin{align*}
	&\frac{N}{3}\|(u_1+a_1)^2(\cdot,0)-(w_1+a_1)^2(\cdot,0)\|_{H^\frac{s-1}{3}(0,\theta)}\\
	&\leq \frac{N}{3}C\theta^\alpha\|(u_1+w_1+2a_1)(\cdot,0)\|_{H^\frac{s+1}{3}(0,\theta)}\cdot\|(u_1-w_1)(\cdot,0)\|_{H^\frac{s+1}{3}(0,\theta)}\\
	&\leq \frac{N}{3}C\theta^\alpha\|u+w+2a\|_{\mathcal{X}_{s,\theta}}\|u-w\|_{\mathcal{X}_{s,\theta}}\\
	&\leq \frac{N}{3}C\theta^\alpha\left(\|u+w\|_{\mathcal{X}_{s,\theta}}+2\|a\|_{\mathcal{X}_{s,\theta}}\right)\|u-w\|_{\mathcal{X}_{s,\theta}}.
\end{align*}
Moreover
\begin{align*}
	u\partial_xu-w\partial_xw=(u-w)\partial_xu+w(\partial_xu-\partial_xw)=(u-w)\partial_xu+w\partial_x(u-w)
\end{align*}
so, using Lemma \ref{lemma 2.4}, we have
\begin{align*}
	\|u\partial_xu-w\partial_xw\|_{W^{\frac{s}{3},1}(0,\theta;\mathbb{L}^2(\mathcal{T}))\cap L^\frac{6}{6-s}(0,\theta;\mathbb{H}^\frac{s}{3}(\mathcal{T}))}&\leq C(\theta^\frac{1}{2}+\theta^\frac{1
	}{3})\|u-w\|_{\mathbb{B}^*_{s,\theta}}\|u\|_{\mathbb{B}^*_{s,\theta}}\\
	&+C(\theta^\frac{1}{2}+\theta^\frac{1
	}{3})\|w\|_{\mathbb{B}^*_{s,\theta}}\|u-w\|_{\mathbb{B}^*_{s,\theta}},
\end{align*}
that is,
\begin{align*}
	\|u\partial_xu-w\partial_xw\|_{W^{\frac{s}{3},1}(0,\theta;\mathbb{L}^2(\mathcal{T}))\cap L^\frac{6}{6-s}(0,\theta;\mathbb{H}^\frac{s}{3}(\mathcal{T}))}&\leq 2C(\theta^\frac{1}{2}+\theta^\frac{1
	}{3})\left(\|u\|_{\mathbb{B}^*_{s,\theta}}+\|w\|_{\mathbb{B}^*_{s,\theta}}\right)\|u-w\|_{\mathbb{B}^*_{s,\theta}}.
\end{align*}
Thus, we get
\begin{align*}
	\|\Gamma_au-\Gamma_aw\|_{\mathcal{X}_{s,\theta}}&\leq \frac{1}{3}C^2N\theta^\alpha\left(\|u+w\|_{\mathcal{X}_{s,\theta}}+2\|a\|_{\mathcal{X}_{s,\theta}}\right)\|u-w\|_{\mathcal{X}_{s,\theta}}\\
	&\ + 2C^2(\theta^\frac{1}{2}+\theta^\frac{1
	}{3})\left(\|u\|_{\mathcal{X}_{s,\theta}}+\|w\|_{\mathcal{X}_{s,\theta}}\right)\|u-w\|_{\mathcal{X}_{s,\theta}}.
\end{align*}
Therefore, for $u,w \in B$ we have that
\begin{equation}\label{eq 3.3}
\begin{aligned}
	\|\Gamma_au-\Gamma_aw\|_{\mathcal{X}_{s,\theta}}&\leq \frac{1}{3}C^2N\theta^\alpha\left(2r+2\|a\|_{\mathcal{X}_{s,\theta}}\right)\|u-w\|_{\mathcal{X}_{s,\theta}}+2C^2(\theta^\frac{1}{2}+\theta^\frac{1
	}{3})2r\|u-w\|_{\mathcal{X}_{s,\theta}}\\
	&\leq \left(\frac{2}{3}C^2N\theta^\alpha r+\frac{2}{3}C^2N\theta^\alpha\|a\|_{\mathcal{X}_{s,T}}+4C^2(\theta^\frac{1}{2}+\theta^\frac{1
	}{3})r\right)\|u-w\|_{\mathcal{X}_{s,\theta}}.
\end{aligned}
\end{equation}
Choosing $\theta \in (0,T)$ satisfying
\begin{align*}
C^2(\theta^\frac{1}{2}+\theta^\frac{1}{3})r<\frac{1}{12},\quad \frac{4}{3}C^2N\theta^\alpha r<\frac{1}{4},\quad \frac{4}{3}C^2N\theta^\alpha\|a\|_{\mathcal{X}_{s,T}}^2<\frac{r}{4}\quad\text{and}\quad\frac{2}{3}C^2N\theta^\alpha\|a\|_{\mathcal{X}_{s,T}}<\frac{1}{3},
\end{align*}
it follows, from \eqref{eq 3.2}, that $\Gamma_a(B)\subset B$ and, thanks to the inequality \eqref{eq 3.3}, it follows that $\Gamma_a$ is a contraction. The Banach fixed point theorem ensures the existence of a fixed point $u \in B$ of $\Gamma_a$ which is the desired solution of \eqref{NLKdV} for $t \in (0,\theta)$, showing Theorem \ref{main}.
\end{proof}


\section{Global well-posedness of the nonlinear problem}\label{sec6}
We are now in a position to extend the previous section. Precisely, in this section, we ensure sharp global well-posedness for the system \eqref{kdV}, closing the study of well-posedness in this graph structure.

\subsection{The case $s=0$: Global a priori estimate} Here, we investigated energy estimates for the problem
\begin{align}\label{Neumann}
	\left\{
	\begin{array}{ll}
		\partial_t u_j+\partial_x u_j+\partial_x^3 u_j+u_j\partial_x u_j=f_j,&t \in (0,T),\ x \in (0,l_j),\\
		u_j(t,0)=u_1(t,0),&t\in (0,T),\\
		\displaystyle\sum_{j=1}^N\partial_x^2 u_j(t,0)=-\alpha u_1(t,0)-\frac{N}{3}u_1^2(t,0)+g_0(t),&t \in (0,T)\\				
		u_j(t,l_j)=0,\ \ \ \ \ \partial_x u_j(t,l_j)=g_j(t),&t\in (0,T),\\
		u_j(0,x)=u_j^0(x),&x \in (0,l_j),
	\end{array}
	\right.
\end{align}
in the case $s=0$. Let us start with some a priori estimate that can be found in \cite{Ammari and Crepeau 2018}.
\begin{proposition}{\cite[Proposition 2.4]{Ammari and Crepeau 2018}}\label{prop 3.1}
Given $T>0$,  let $g_0 \in L^2(0,T)$, $g \in [L^2(0,T)]^N$, $f \in L^2(0,T;\mathbb{L}^2(0,L))$ and $u^0 \in \mathbb{L}^2(\mathcal{T})$. If $u \in \mathbb{B}_{0,T}$ is a solution of \eqref{Neumann} then
\begin{align*}
	\|u\|_{\mathbb{B}_{0,T}}\leq C\left(\|u^0\|_{\mathbb{L}^2(\mathcal{T})}+\|g_0\|_{L^2(0,T)}+\|g\|_{[L^2(0,T)]^N}+\|f\|_{L^1(0,T;\mathbb{L}^2(\mathcal{T}))}\right).
\end{align*}
In addition, $u_1(\cdot,0)\in L^2(0,L)$ with
\begin{align*}
	\|u_1(\cdot,0)\|_{L^2(0,T)}\leq C\left(\|u^0\|_{\mathbb{L}^2(\mathcal{T})}+\|g_0\|_{L^2(0,T)}+\|g\|_{[L^2(0,T)]^N}+\|f\|_{L^1(0,T;\mathbb{L}^2(\mathcal{T}))}\right).
\end{align*}
\end{proposition}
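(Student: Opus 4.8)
The plan is to obtain the bound through two successive energy (multiplier) estimates, following the classical strategy for the KdV equation on a bounded interval, adapted to the junction: an $L^2$-in-space estimate that yields the $C([0,T];\mathbb{L}^2(\mathcal{T}))$ part of the norm together with the hidden trace regularity of $u_1(\cdot,0)$, and a Kato-type smoothing estimate (multiplication by $xu_j$) that produces the $L^2(0,T;\mathbb{H}^1(\mathcal{T}))$ part. Since the statement is an a priori estimate, I would first carry out the computations for data regular enough that every integration by parts is licit (for instance $u^0\in D(A)$, $g_0,g_j\in C_l^2([0,T])$, $f\in C^1([0,T];\mathbb{L}^2(\mathcal{T}))$, so that the solution has the regularity of Proposition~\ref{prop 2.2}), and then pass to a general $u\in\mathbb{B}_{0,T}$ by density of the data and continuity of the solution map, as in the $s=0$ theory of Section~\ref{sec3}.

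\emph{First estimate.} Multiply the $j$-th equation in \eqref{Neumann} by $u_j$, integrate over $(0,l_j)$, and sum over $j=1,\dots,N$. The Dirichlet condition $u_j(t,l_j)=0$ annihilates all contributions at $x=l_j$ except $\frac12\sum_j(\partial_x u_j)^2(t,l_j)=\frac12\sum_j g_j^2(t)$; at the vertex the continuity condition $u_j(t,0)=u_1(t,0)$ allows one to factor $u_1(t,0)\sum_j\partial_x^2 u_j(t,0)$ and to substitute the boundary law $\sum_j\partial_x^2 u_j(t,0)=-\alpha u_1(t,0)-\frac{N}{3}u_1^2(t,0)+g_0(t)$. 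The crucial point is that the cubic term $-\frac{N}{3}u_1^3(t,0)$ arising from $\sum_j\int_0^{l_j}u_j^2\partial_x u_j\,dx=-\frac13\sum_j u_j^3(t,0)$ is exactly cancelled by the $+\frac{N}{3}u_1^3(t,0)$ produced by $-u_1(t,0)\cdot(-\frac{N}{3}u_1^2(t,0))$. One is left with
\[
\tfrac12\tfrac{d}{dt}\|u(t)\|_{\mathbb{L}^2(\mathcal{T})}^2+\Bigl(\alpha-\tfrac{N}{2}\Bigr)u_1^2(t,0)+\tfrac12\sum_{j=1}^N(\partial_x u_j)^2(t,0)=(f,u)_{\mathbb{L}^2(\mathcal{T})}+u_1(t,0)g_0(t)+\tfrac12\sum_{j=1}^N g_j^2(t).
\]
Because $\alpha>\frac{N}{2}$, the left-hand side is coercive in $u_1(t,0)$, so Young's inequality absorbs $u_1(t,0)g_0(t)$ at the cost of $Cg_0^2(t)$. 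Integrating in time, bounding $(f,u)_{\mathbb{L}^2(\mathcal{T})}\le\|f(t)\|_{\mathbb{L}^2(\mathcal{T})}\sup_{[0,T]}\|u\|_{\mathbb{L}^2(\mathcal{T})}$, and solving the resulting quadratic inequality for $\sup_{[0,T]}\|u\|_{\mathbb{L}^2(\mathcal{T})}$ (or a Gronwall argument) gives the $C([0,T];\mathbb{L}^2(\mathcal{T}))$ bound and, from the integrated coercive term, the bound on $\|u_1(\cdot,0)\|_{L^2(0,T)}$ with the asserted right-hand side.

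\emph{Second estimate.} Multiply the $j$-th equation by $xu_j$, integrate over $(0,l_j)$, and sum. Integration by parts turns $\int_0^{l_j}xu_j\partial_x^3 u_j\,dx$ into $\frac32\int_0^{l_j}(\partial_x u_j)^2\,dx$ plus boundary terms involving $g_j$ (at $x=l_j$, since $u_j(t,l_j)=0$) and the traces $u_1(t,0)$, $\partial_x u_j(t,0)$ already controlled by the first estimate; the drift term contributes $-\frac12\|u_j\|_{L^2(0,l_j)}^2$, the time term $\frac12\frac{d}{dt}\int_0^{l_j}xu_j^2\,dx$, and the nonlinearity $\sum_j\int_0^{l_j}xu_j^2\partial_x u_j\,dx=-\frac13\sum_j\int_0^{l_j}u_j^3\,dx$. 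The cubic interior term is handled by the one-dimensional Agmon/Gagliardo--Nirenberg inequality $\int_0^{l_j}|u_j|^3\,dx\le C\|u_j\|_{L^2(0,l_j)}^{5/2}\|u_j\|_{H^1(0,l_j)}^{1/2}$ followed by Young's inequality, which absorbs a small multiple of $\sum_j\|u_j\|_{H^1(0,l_j)}^2$ into $\frac32\sum_j\int(\partial_x u_j)^2$, leaving a power of $\|u\|_{\mathbb{L}^2(\mathcal{T})}$ already controlled by the first step. Integrating in $t$ over $[0,T]$ then yields $\int_0^T\|u(t)\|_{\mathbb{H}^1(\mathcal{T})}^2\,dt\le C(\cdots)$, and combining with the first estimate gives the stated bound on $\|u\|_{\mathbb{B}_{0,T}}$.

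\emph{Main obstacle.} The delicate ingredients are (i) the exact algebraic cancellation at the junction: it is the specific coefficient $N/3$ in the nonlinear vertex condition, together with the continuity condition, that removes the otherwise uncontrolled cubic boundary term, and it is the structural hypothesis $\alpha>N/2$ that makes the vertex contribution coercive rather than merely bounded; and (ii) closing the Kato estimate, i.e.\ absorbing $\int u_j^3$ without losing control, which forces one to have the $L^\infty_t\mathbb{L}^2_x$ bound already in hand. A secondary technical point is justifying the integrations by parts at the regularity level $u\in\mathbb{B}_{0,T}$, which is circumvented by proving the identities for smooth data and passing to the limit. We note that the statement is quoted from \cite{Ammari and Crepeau 2018}, so in the paper one may simply invoke it; the above is the self-contained derivation.
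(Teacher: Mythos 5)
The paper itself offers no proof of Proposition~\ref{prop 3.1}: the statement is imported verbatim from \cite[Proposition 2.4]{Ammari and Crepeau 2018}. Your two-multiplier derivation is essentially the argument of that reference (and the classical one for KdV on a bounded interval), so there is nothing to compare it against inside this paper other than the citation. The computations you outline are correct: multiplying by $u_j$ and summing, the Dirichlet conditions at $x=l_j$, the continuity condition at the vertex and the central-node law produce exactly the identity you display, with the cubic boundary term $-\tfrac{N}{3}u_1^3(t,0)$ cancelled by the $N/3$ coefficient in the vertex condition and with coercivity in $u_1(t,0)$ coming from $\alpha>\tfrac{N}{2}$; this yields both the $C([0,T];\mathbb{L}^2(\mathcal{T}))$ bound and the hidden traces $u_1(\cdot,0),\ \partial_x u_j(\cdot,0)\in L^2(0,T)$. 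The Kato multiplier $xu_j$ then gives $\tfrac32\int(\partial_x u_j)^2$ up to boundary terms controlled by those traces and by $g_j$, and the interior cubic term is absorbed by Gagliardo--Nirenberg plus Young exactly as you say.

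Two caveats. First, absorbing $\int u_j^3$ leaves a term of order $\|u\|_{L^\infty_t\mathbb{L}^2(\mathcal{T})}^{10/3}$, so the bound you obtain for the $L^2(0,T;\mathbb{H}^1(\mathcal{T}))$ component is superlinear in the data; the constant $C$ in the final inequality therefore depends on the size of $(u^0,g_0,g,f)$ (as is made explicit in the paper's own nonlinear estimates, e.g.\ Proposition~\ref{energy_s=3}), and cannot be the data-independent constant the displayed statement might suggest. Second, your justification of the formal computations is phrased circularly: for the nonlinear problem \eqref{Neumann} there is, at this stage, no globally defined continuous solution map whose continuity you can invoke after approximating the data --- producing such global control is precisely the purpose of this a priori estimate. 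The standard repair is to regularize the given $\mathbb{B}_{0,T}$ solution itself (using the hidden regularity supplied by the linear maps $\Psi$, $\Psi_j^k$ of Proposition~\ref{W.P._s=0} applied with right-hand side $f-u\partial_x u\in L^1(0,T;\mathbb{L}^2(\mathcal{T}))$), or to run the identities on the smooth solutions furnished by the local theory and conclude by local uniqueness; either route fixes the step without altering your computation.
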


Now, we study the problem
\begin{align}\label{Neumann+a}
	\begin{cases}
		\partial_t u_j+\partial_x u_j+\partial_x^3 u_j+u_j\partial_x u_j=-a_j\partial_xa_j-\partial_x(a_ju_j),&t \in (0,T),\ x \in (0,l_j),\\
		u_j(t,0)=u_1(t,0),&t\in (0,T),\\
		\displaystyle\sum_{j=1}^N\partial_x^2 u_j(t,0)=-\alpha u_1(t,0)-\frac{N}{3}(u_1+a_1)^2(t,0),&t \in (0,T)\\				
		u_j(t,l_j)=0,\ \ \ \ \ \partial_x u_j(t,l_j)=g_j,&t\in (0,T),\\
		u_j(0,x)=u_j^0(x),&x \in (0,l_j),
	\end{cases}
\end{align}
where $a \in \mathcal{X}_{\lambda,T}$. The a priori estimates for this nonlinear system can be read as follows.
\begin{proposition}\label{prop 3.2}
	Let $T>0$, $\lambda>\frac{1}{2}$, $a \in \mathcal{X}_{\lambda,T}$, $u_0\in \mathbb{L}^2(\mathcal{T})$ and $g\in [L^2(0,T)]^N$ be given. If $u \in \mathbb{B}_{0,T}$ is a solution of \eqref{Neumann+a} then
	\begin{align*}
		\|u\|_{\mathbb{B}_{0,T}}\leq C\left(\|u^0\|_{\mathbb{L}^2(\mathcal{T})}+\|g\|_{[L^2(0,T)]^N}+\|a\|_{\mathbb{B}_{0,T}}\right).
	\end{align*}
\end{proposition}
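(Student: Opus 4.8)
The plan is to prove this a priori bound by the energy method, using essentially the dissipative structure hidden in the vertex conditions. The starting point is to recognise \eqref{Neumann+a} as a particular case of the variable-coefficient linear problem \eqref{LkdV-a}: moving $-\partial_x(a_ju_j)$ to the left, the equation reads $\partial_t u_j+\partial_x[(1+a_j)u_j]+\partial_x^3 u_j=-\tfrac12\partial_x(u_j^2+a_j^2)$, with vertex conditions $u_j(t,0)=u_1(t,0)$ and $\sum_j\partial_x^2u_j(t,0)=-\alpha u_1(t,0)-\tfrac N3(u_1+a_1)^2(t,0)$, right-endpoint conditions $u_j(t,l_j)=0$, $\partial_xu_j(t,l_j)=g_j$, and datum $u^0$. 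Since $a\in\mathcal X_{\lambda,T}$ with $\lambda>\tfrac12$, the trace $a_j(\cdot,0)$ belongs to $H^{(\lambda+1)/3}(0,T)\hookrightarrow L^\infty(0,T)$ with $\|a_j(\cdot,0)\|_{L^\infty(0,T)}\le C\|a\|_{\mathcal X_{\lambda,T}}$; this (rather than merely $a\in\mathbb B_{0,T}$) is the feature of $a$ the argument exploits, and it is why the constant is allowed to depend on $\|a\|_{\mathcal X_{\lambda,T}}$.

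\textbf{$\mathbb L^2$ estimate.} Multiplying the $j$-th equation by $u_j$, integrating over $(0,l_j)$, summing in $j$, and using the boundary conditions, the cubic boundary contribution $\sum_j\int_0^{l_j}u_j^2\partial_xu_j=-\tfrac N3u_1^3(0)$ is cancelled \emph{exactly} by the cubic part of $-u_1(0)\sum_j\partial_x^2u_j(0)$ produced by the vertex condition — this is precisely the role of the coefficient $\tfrac N3$, and it is what keeps the $\mathbb L^2$ estimate from being destabilised by the nonlinearity. What remains is
\[
\tfrac12\tfrac{d}{dt}\|u\|_{\mathbb L^2(\mathcal T)}^2+\big(\alpha-\tfrac N2\big)u_1^2(0)+\tfrac12\sum_{j=1}^N\big(\partial_xu_j(0)\big)^2=\tfrac12\sum_{j=1}^Ng_j^2+R_a(t),
\]
with $\alpha>\tfrac N2$ making the second term nonnegative and the third a genuine gain of boundary regularity, and $R_a$ collecting the terms $b(t)u_1^2(0)$, $a_1^2(0)u_1(0)$, $\int_0^{l_j}u_j^2\partial_xa_j$, $\int_0^{l_j}u_ja_j\partial_xa_j$, where $\|b\|_{L^\infty(0,T)}\le C\|a\|_{\mathcal X_{\lambda,T}}$. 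The terms in $R_a$ are estimated with Young's and the Gagliardo–Nirenberg inequalities ($\|u_j\|_{L^\infty}\le C\|u_j\|_{L^2}^{1/2}\|u_j\|_{H^1}^{1/2}$, $\|u_j\|_{L^4}\le C\|u_j\|_{L^2}^{3/4}\|u_j\|_{H^1}^{1/4}$): the $H^1$-parts are set aside with a small factor, the $L^2$-parts feed a Gronwall argument, and the $u_1^2(0)$-part is split by Gagliardo–Nirenberg into a Gronwall term plus a term absorbed into $(\alpha-\tfrac N2)u_1^2(0)$ after choosing the splitting parameter small depending on $\|a\|_{\mathcal X_{\lambda,T}}$. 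Integrating in time gives — first on a short interval, then by iteration over $[0,T]$, since the estimate does not blow up — the bound $\|u\|_{L^\infty(0,T;\mathbb L^2(\mathcal T))}+\|u_1(\cdot,0)\|_{L^2(0,T)}+\big(\sum_j\|\partial_xu_j(\cdot,0)\|_{L^2(0,T)}^2\big)^{1/2}\le C\big(\|u^0\|_{\mathbb L^2(\mathcal T)}+\|g\|_{[L^2(0,T)]^N}+\|a\|_{\mathbb B_{0,T}}\big)$, with $C=C(T,\|a\|_{\mathcal X_{\lambda,T}})$.

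\textbf{$L^2(0,T;\mathbb H^1)$ estimate (Kato smoothing).} To recover the remaining part of $\|u\|_{\mathbb B_{0,T}}$, multiply the $j$-th equation by $xu_j$ and integrate. Because $u_j(l_j)=0$ and the multiplier vanishes at the vertex, most boundary terms disappear and the third-order term produces $\tfrac32\sum_j\int_0^{l_j}(\partial_xu_j)^2$ with the correct sign. The cubic terms (from the nonlinearity and from the source) are controlled by Gagliardo–Nirenberg together with the $L^\infty_t\mathbb L^2_x$ bound of the previous step, producing only harmless powers of $\|u\|_{L^\infty_t\mathbb L^2_x}$ and factors $T^{1/4}$; the surviving vertex term $u_1(0)\sum_j\partial_xu_j(0)$ is dominated by $\tfrac N2u_1^2(0)+\tfrac12\sum_j(\partial_xu_j(0))^2$, hence by the boundary traces already controlled; and the $a$-dependent source term is handled as before. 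Integrating on a short subinterval, absorbing the small terms and iterating gives $\|u\|_{L^2(0,T;\mathbb H^1(\mathcal T))}\le C(\cdots)$ with the same right-hand side; adding the two estimates proves the claim.

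\textbf{Where the difficulty lies.} The delicate point is that, unlike on a half-line or an interval with a Dirichlet endpoint, no boundary condition forces $u_1(0)$ or $\partial_xu_j(0)$ to vanish, so the boundary traces $u_1^2(0)$ and $(\partial_xu_j(0))^2$ appear in \emph{both} multiplier identities and must be tracked simultaneously; moreover the coefficient $b(t)$ in front of $u_1^2(0)$ has size $\sim\|a\|_{\mathcal X_{\lambda,T}}$ and cannot simply be absorbed into $(\alpha-\tfrac N2)u_1^2(0)$. Overcoming this — via the Gagliardo–Nirenberg splitting and the short-interval iteration whose number of steps depends on $T$ and $\|a\|_{\mathcal X_{\lambda,T}}$, while keeping careful track of the cubic terms in the smoothing estimate — is the main technical work; the cancellation of the cubic boundary term in the $\mathbb L^2$ estimate is what makes the iteration possible and hence allows the bound to hold on an arbitrary interval $[0,T]$.
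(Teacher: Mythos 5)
Your plan is a genuinely different route from the paper's. The paper proves Proposition \ref{prop 3.2} without redoing any energy estimates: it rewrites the vertex condition of \eqref{Neumann+a} as that of \eqref{Neumann} with boundary datum $g_0=-\tfrac{2N}{3}u_1(\cdot,0)a_1(\cdot,0)-\tfrac{N}{3}a_1^2(\cdot,0)$ and forcing $f=-a\partial_x a-\partial_x(au)$, invokes the quoted a priori estimate of Proposition \ref{prop 3.1}, bounds the $u$-dependent right-hand sides by $C(\theta^{1/2}+\theta^{1/3})\|a\|_{\mathbb{B}_{0,T}}\|u\|_{\mathbb{B}_{0,\theta}}$ and $C\|a_1(\cdot,0)\|_{H^{(\lambda+1)/3}(0,\theta)}\|u\|_{\mathbb{B}_{0,\theta}}$ via Lemma \ref{lemma 2.4} and the embedding $H^{(\lambda+1)/3}(0,T)\hookrightarrow C([0,T])$, absorbs these for $\theta$ small depending only on $a$, and iterates over finitely many subintervals. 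You instead re-derive from scratch the content of Proposition \ref{prop 3.1} with the $a$-terms built in; your structural observations (exact cancellation of the cubic vertex term due to the coefficient $\tfrac N3$, the sign from $\alpha>\tfrac N2$, the $xu_j$ identity producing $\tfrac32\int(\partial_x u_j)^2$ and the surviving vertex term $u_1(0)\sum_j\partial_x u_j(0)$, and the use of $a_j(\cdot,0)\in H^{(\lambda+1)/3}\hookrightarrow L^\infty$) are all correct.

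However, as written the argument has a genuine gap in how the estimates are closed. First, the term $b(t)u_1^2(t,0)$ with $\|b\|_{L^\infty}\sim\|a\|_{\mathcal{X}_{\lambda,T}}$ cannot be ``absorbed into $(\alpha-\tfrac N2)u_1^2(0)$'' by choosing a Gagliardo--Nirenberg splitting parameter: Gagliardo--Nirenberg converts $u_1^2(0)$ into $\epsilon\|\partial_x u_1\|_{L^2}^2+C_\epsilon\|u_1\|_{L^2}^2$, and there is no $\|\partial_x u\|_{L^2}^2$ term on the left of the $L^2$ identity to receive the $\epsilon$-part; it can only be absorbed by the smoothing term of the \emph{second} identity (or, as the paper does, by smallness of $\|a_1(\cdot,0)\|_{H^{(\lambda+1)/3}(0,\theta)}$ on short intervals). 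Consequently the two multiplier estimates cannot be run sequentially as you describe: your ``$H^1$-parts set aside with a small factor'' make step one conditional on step two, while step two's right-hand side (the vertex term and the cubic term $\tfrac13\int_0^T\!\int u^3$) needs the outputs of step one; this coupled absorption/bootstrap on short intervals is the actual proof and is not carried out. Second, the cubic term in the Kato identity is not perturbative in $a$: after Gagliardo--Nirenberg it yields superlinear powers of $\sup_t\|u\|_{\mathbb{L}^2(\mathcal T)}$, so your scheme, as described, delivers a constant depending also on $\|u^0\|$ and $\|g\|$ (or requires a further iteration whose step size depends on the solution), and does not reach the stated form $\|u\|_{\mathbb{B}_{0,T}}\leq C\bigl(\|u^0\|_{\mathbb{L}^2(\mathcal T)}+\|g\|_{[L^2(0,T)]^N}+\|a\|_{\mathbb{B}_{0,T}}\bigr)$ with $C$ determined by $T$ and $a$ alone, which the paper obtains directly by quoting Proposition \ref{prop 3.1}. (A minor omission: integrating $-\int_0^{l_j}\partial_x(a_ju_j)u_j\,dx$ by parts also leaves the boundary term $\tfrac12 a_j(t,0)u_1^2(t,0)$, of the same type as $b(t)u_1^2(0)$ and to be handled with it.)
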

\begin{proof} If $u$ solves \eqref{Neumann+a} in $(0,T)$ then, is also solution in $(0,\theta)$, for all $\theta \in(0,T)$. Observe that the central node conditions can be seen as
\begin{align*}
\displaystyle\sum_{j=1}^N\partial_x^2 w_j(t,0)=-\alpha u_1(t,0)-\frac{N}{3}u_1^2(t,0)-\frac{2N}{3}u_1(t,0)a_1(t,0)-\frac{N}{3}a_1^2(t,0),\qquad t \in (0,\theta).
\end{align*}
Hence, Proposition \ref{prop 3.1} gives us
\begin{align*}
	\|u\|_{\mathbb{B}_{0,\theta}}&\leq C\left(
	\begin{array}{c}
	\|u^0\|_{\mathbb{L}^2(\mathcal{T})}+\|g\|_{[L^2(0,T)]^N}+\|-a_j\partial_xa_j-\partial_x(a_ju_j)\|_{L^1(0,\theta;\mathbb{L}^2(\mathcal{T}))}\\
	+\\
	\left\|\frac{2N}{3}u_1(\cdot,0)a_1(\cdot,0)+\frac{N}{3}a_1^2(\cdot,0)\right\|_{L^2(0,\theta)}
	\end{array}
	\right).
\end{align*}
Note that by Lemma~\ref{lemma 2.4}
\begin{align*}
	\|-a_j\partial_xa_j-\partial_x(a_ju_j)\|_{L^1(0,\theta;\mathbb{L}^2(\mathcal{T}))}\leq C(\theta^\frac{1}{2}+\theta^\frac{1}{3})\left(\|a\|_{\mathbb{B}_{0,T}}^2+\|a\|_{\mathbb{B}_{0,T}}\|u\|_{\mathbb{B}_{0,\theta}}\right).
\end{align*}
Moreover, $\lambda>\frac{1}{2}$ provides $H^\frac{\lambda+1}{3}(0,T)\hookrightarrow C([0,T])$, hence
\begin{align*}
	\left\|u_1(\cdot,0)a_1(\cdot,0)\right\|_{L^2(0,\theta)}^2&=\int_0^\theta|u_1(t,0)a_1(t,0)|^2\leq \|a_1(\cdot,0)\|_{C([0,\theta])}^2\int_0^\theta |u_1(t,0)|^2\\
	&\leq C\|a_1(\cdot,0)\|_{H^\frac{\lambda+1}{3}(0,\theta)}^2\int_0^\theta \|u_1(t,\cdot)\|_{L^\infty(0,l_j)}^2\\
	&\leq C\|a_1(\cdot,0)\|_{H^\frac{\lambda+1}{3}(0,\theta)}^{2}\|u\|_{\mathbb{B}_{0,\theta}}^{2}.
\end{align*}
Similarly,
\begin{align*}
	\left\|a_1^2(\cdot,0)\right\|_{L^2(0,\theta)}^2\leq C\|a_1(\cdot,0)\|_{H^\frac{\lambda+1}{3}(0,\theta)}^2\|a_1\|_{L^2(0,\theta;H^1(0,l_j))}^2
\end{align*}
Joining the previous estimates
\begin{align*}
	\|u\|_{\mathbb{B}_{0,\theta}}&\leq C\left(\|u_0\|_{\mathbb{L}^2(\mathcal{T})}+\|g\|_{[L^2(0,T)]^N}+(\theta^\frac{1}{2}+\theta^\frac{1}{3})\|a\|_{\mathbb{B}_{0,T}}^2+(\theta^\frac{1}{2}+\theta^\frac{1}{3})\|a\|_{\mathbb{B}_{0,T}}\|u\|_{\mathbb{B}_{0,\theta}}\right.\\
	&\left.+\|a_1(\cdot,0)\|_{H^\frac{\lambda+1}{3}(0,\theta)}\|u\|_{\mathbb{B}_{0,\theta}}+\|a_1(\cdot,0)\|_{H^\frac{\lambda+1}{3}(0,\theta)}\|a\|_{\mathbb{B}_{0,T}}\right)
\end{align*}
Choosing $\theta \in (0,T)$ such that
\begin{align*}
(\theta^\frac{1}{2}+\theta^\frac{1}{3})\|a\|_{\mathbb{B}_{0,T}}+\|a_1(\cdot,0)\|_{H^\frac{\lambda+1}{3}(0,\theta)}<\frac{1}{2}
\end{align*}
we obtain
\begin{align*}
	\|u\|_{\mathbb{B}_{0,\theta}}&\leq C\left(\|u_0\|_{\mathbb{L}^2(\mathcal{T})}+\|g\|_{[L^2(0,T)]^N}\right)+\frac{1}{2}\|a\|_{\mathbb{B}_{0,T}}+\frac{1}{2}\|u\|_{\mathbb{B}_{0,\theta}}
\end{align*}
and, consequently,
\begin{align*}
	\|u\|_{\mathbb{B}_{0,\theta}}&\leq C\left(\|u_0\|_{\mathbb{L}^2(\mathcal{T})}+\|g\|_{[L^2(0,T)]^N}+\|a\|_{\mathbb{B}_{0,T}}\right),
\end{align*}
Since $\theta$ does not depend on the data $u_0$, we can employ this inequality repeatedly (in a finite number of intervals of size $\theta$) to obtain the desired result in $[0,T]$.
\end{proof}

Finally, we can state an energy estimate to problem \eqref{kdV}, in the case when $s=0$.
\begin{proposition}\label{energy_s=0}
	Let $T>0$ and $\lambda\geq 1$ be. Consider $u^0 \in \mathbb{L}^2(\mathcal{T})$, $g_0 \in H^{\frac{\lambda-1}{3}}(0,T)$, $g \in [L^2(0,T)]^N$, $p\in [H^\frac{\lambda+1}{3}(0,T)]^N$. If $u \in \mathbb{B}_{0,T}$ is a solution of \eqref{kdV}, then
	\begin{align*}
		\|u\|_{\mathbb{B}_{0,T}}&\leq C\left(
		\begin{array}{c}
			\|u^0\|_{\mathbb{L}^2(\mathcal{T})}+\|g_0\|_{H^\frac{\lambda-1}{3}(0,T)}+\|g\|_{L^2(0,T)]^N}+\|p\|_{[H^\frac{\lambda+1}{3}(0,T)]^N}
		\end{array}
		\right).
	\end{align*}
\end{proposition}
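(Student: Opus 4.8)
The plan is to reduce \eqref{kdV} to a problem of the form \eqref{Neumann+a}, whose a priori bound is Proposition \ref{prop 3.2}, by subtracting off a regular lifting of the inhomogeneous data $p$ and $g_0$ built from the linear theory of Section \ref{sec3}. Since $\lambda\geq 1$ we have $\tfrac{\lambda+1}{3}>\tfrac12$, so each $p_j\in C([0,T])$ with $|p_j(0)|\leq C\|p_j\|_{H^{\frac{\lambda+1}{3}}(0,T)}$; I would choose $\varphi_j\in C^\infty([0,l_j])$ with $\varphi_j(0)=0$, $\varphi_j(l_j)=p_j(0)$ and $\|\varphi_j\|_{H^1(0,l_j)}\leq C|p_j(0)|$. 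As all the numbers $\varphi_j(0)$ coincide and $\varphi_j(l_j)=p_j(0)$, the quadruple $(\varphi,g_0,0,p)$ satisfies the linear $1$-compatibility conditions, so Corollary \ref{W.P._s in [0,3]} together with the trace estimates of Section \ref{sec3} provides the solution $a=(a_1,\dots,a_N)$ of the linear problem \eqref{LkdV} with data $(\varphi,g_0,0,p)$ and zero source, with $a\in\mathcal{X}_{1,T}$ and
\begin{align*}
\|a\|_{\mathcal{X}_{1,T}}\leq C\big(\|\varphi\|_{\mathbb{H}^1(\mathcal{T})}+\|g_0\|_{L^2(0,T)}+\|p\|_{[H^{2/3}(0,T)]^N}\big)\leq C\big(\|g_0\|_{H^{\frac{\lambda-1}{3}}(0,T)}+\|p\|_{[H^{\frac{\lambda+1}{3}}(0,T)]^N}\big),
\end{align*}
the last step using $\lambda\geq 1$ and the embeddings $H^{\frac{\lambda-1}{3}}(0,T)\hookrightarrow L^2(0,T)$, $H^{\frac{\lambda+1}{3}}(0,T)\hookrightarrow H^{2/3}(0,T)$.

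Next I would set $w:=u-a$; since $u\in\mathbb{B}_{0,T}$ and $\mathcal{X}_{1,T}\subset\mathbb{B}_{0,T}$, we have $w\in\mathbb{B}_{0,T}$. Substituting $u_j=w_j+a_j$ into \eqref{kdV} and using that $a$ solves $\partial_ta_j+\partial_xa_j+\partial_x^3a_j=0$ with $a_j(t,0)=a_1(t,0)$, $\sum_j\partial_x^2a_j(t,0)=-\alpha a_1(t,0)+g_0(t)$, $a_j(t,l_j)=p_j(t)$ and $\partial_xa_j(t,l_j)=0$, a short computation — the nonlinearity yields $w_j\partial_xw_j+\partial_x(a_jw_j)+a_j\partial_xa_j$ and the linear part of $a$ cancels — shows that $w$ solves \eqref{Neumann+a} with initial datum $w_j(0,\cdot)=u_j^0-\varphi_j\in L^2(0,l_j)$ and the same Neumann datum $g_j$ at $l_j$. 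Proposition \ref{prop 3.2} (with its parameter taken equal to $1>\tfrac12$, and $a\in\mathcal{X}_{1,T}$) then gives $\|w\|_{\mathbb{B}_{0,T}}\leq C\big(\|u^0-\varphi\|_{\mathbb{L}^2(\mathcal{T})}+\|g\|_{[L^2(0,T)]^N}+\|a\|_{\mathbb{B}_{0,T}}\big)$, whence
\begin{align*}
\|u\|_{\mathbb{B}_{0,T}}\leq\|w\|_{\mathbb{B}_{0,T}}+\|a\|_{\mathbb{B}_{0,T}}\leq C\big(\|u^0\|_{\mathbb{L}^2(\mathcal{T})}+\|\varphi\|_{\mathbb{H}^1(\mathcal{T})}+\|g\|_{[L^2(0,T)]^N}+\|a\|_{\mathcal{X}_{1,T}}\big);
\end{align*}
combining this with the estimate for $\|a\|_{\mathcal{X}_{1,T}}$ and with $\|\varphi\|_{\mathbb{H}^1(\mathcal{T})}\leq C\|p\|_{[H^{\frac{\lambda+1}{3}}(0,T)]^N}$ yields the asserted bound.

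The main difficulty is bookkeeping rather than analysis: one must verify that after the substitution $u=w+a$ the problem for $w$ is \emph{exactly} \eqref{Neumann+a}, i.e. that the nonlinear vertex term becomes $-\tfrac N3(w_1+a_1)^2(t,0)$ and the source becomes $-a_j\partial_xa_j-\partial_x(a_jw_j)$, which is precisely where the four conditions $a_j(t,0)=a_1(t,0)$, $\sum_j\partial_x^2a_j(t,0)=-\alpha a_1(t,0)+g_0(t)$, $a_j(t,l_j)=p_j(t)$ and $\partial_xa_j(t,l_j)=0$ are exploited. A secondary point is that $a$ must be regular enough for all these endpoint and vertex traces to be classically meaningful; this is ensured by $a\in\mathcal{X}_{1,T}$, and is the reason for producing $a$ via the linear well-posedness theory rather than by an explicit cutoff. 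If one prefers to keep the data at the level $\lambda$ throughout, one may instead take $a\in\mathcal{X}_{\lambda,T}$, at the cost of checking the full $\lambda$-compatibility conditions for $\varphi$.
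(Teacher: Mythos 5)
Your proposal is correct and rests on the same core mechanism as the paper's proof: peel off a solution of the linear problem carrying the data $g_0$ and $p$, check that the remainder solves exactly \eqref{Neumann+a} (your computation of the vertex term $-\alpha w_1-\tfrac{N}{3}(w_1+a_1)^2$ and of the source $-a_j\partial_x a_j-\partial_x(a_jw_j)$ is the same bookkeeping the paper performs with its splitting $u=v+w$), and then invoke Proposition \ref{prop 3.2}. The genuine difference lies in how the linear piece is built. The paper takes $v$ with \emph{zero} initial data and data $(0,g_0,0,p)$ and estimates it in $\mathbb{B}_{\lambda,T}$ via Proposition \ref{W.P.LKdV-a}, which, for $\lambda>\tfrac12$, formally requires the linear compatibility condition $\phi_{0,j}(l_j)=p_j(0)$, i.e.\ $p_j(0)=0$. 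You instead insert the smooth lifting $\varphi$ with $\varphi_j(0)=0$, $\varphi_j(l_j)=p_j(0)$, so that $(\varphi,g_0,0,p)$ is genuinely $1$-compatible, and you work at the fixed regularity level $s=1$ for every $\lambda\geq1$, absorbing the data through the embeddings $H^{\frac{\lambda-1}{3}}\hookrightarrow L^2$ and $H^{\frac{\lambda+1}{3}}\hookrightarrow H^{\frac{2}{3}}$; the price is only the extra term $\|\varphi\|_{\mathbb{H}^1(\mathcal{T})}\leq C\|p\|_{[H^{\frac{\lambda+1}{3}}(0,T)]^N}$, and the payoff is that the a priori bound on the linear part is justified without any hidden restriction on $p_j(0)$. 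One small point to keep in mind: the membership $a\in\mathcal{X}_{1,T}$ (the trace norms $\partial_x^k a_j\in L^\infty_x\big(0,l_j;H^{\frac{2-k}{3}}(0,T)\big)$ needed in Proposition \ref{prop 3.2}) is not literally part of the statement of Corollary \ref{W.P._s in [0,3]}; it follows by interpolating the trace estimates of Propositions \ref{W.P._s=0} and \ref{W.P._s=3}, which is exactly the (implicit) step the paper also takes when it uses its linear solution as the coefficient $a$ in Proposition \ref{prop 3.2}.
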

\begin{proof}
	Write $u=v+w$ where $v$ and $w$ are solutions of
	\begin{align*}
		\begin{cases}
			\partial_t v_j+\partial_x v_j+\partial_x^3 v_j=0,&t \in (0,T),\ x \in (0,l_j),\\
			v_j(t,0)=v_1(t,0),&t\in (0,T),\\
			\displaystyle\sum_{j=1}^N\partial_x^2 u_j(t,0)=-\alpha v_1(t,0)+g_0(t),&t \in (0,T),\\				
			v_j(t,l_j)=p_j(t),\ \ \ \ \ \partial_x v_j(t,l_j)=0,&t\in (0,T),\\
			v_j(0,x)=0,&x \in (0,l_j),
		\end{cases}
	\end{align*}
	and
	\begin{align*}
		\begin{cases}
			\partial_t w_j+\partial_x [(v_j+1)w_j]+\partial_x^3w_j w_j+w_j\partial_x w_j=-v_j\partial_xv_j,&t \in (0,T),\ x \in (0,l_j),\\
			w_j(t,0)=w_1(t,0),&t\in (0,T),\\
			\displaystyle\sum_{j=1}^N\partial_x^2 w_j(t,0)=-\alpha w_1(t,0)-\frac{N}{3}(w_1+v_1)^2(t,0),&t \in (0,T),\\				
			w_j(t,l_j)=0,\ \ \ \ \ \partial_x w_j(t,l_j)=g_j,&t\in (0,T),\\
			w_j(0,x)=u_j^0,&x \in (0,l_j),
		\end{cases}
	\end{align*}
	respectively. Propositions \ref{W.P.LKdV-a} and  \ref{prop 3.2} gives us
	\begin{align*}
		\|v\|_{\mathbb{B}_{0,T}}\leq \|v\|_{\mathbb{B}_{\lambda,T}}&\leq C\left(
		\begin{array}{c}
			\|g_0\|_{H^\frac{\lambda-1}{3}(0,T)}+\|p\|_{[H^\frac{\lambda+1}{3}(0,T)]^N}
		\end{array}
		\right),
	\end{align*}
	and
	\begin{align*}
		\|w\|_{\mathbb{B}_{0,T}}\leq C\left(	\|u^0\|_{\mathbb{L}^2(\mathcal{T})}+\|g\|_{[L^2(0,T)]^N}+\|v\|_{\mathbb{B}_{0,T}}\right),
	\end{align*}
 so the result follows.
\end{proof}
We are finally in a position to establish the global well-posedness for \eqref{kdV} in the energy $L^2$-level, that is, Theorem \ref{global_(0,3)} when $s=0$.
\begin{theorem}\label{global s=0}
Let $T>0$ and $\lambda\geq 1$ be. Given $u^0 \in \mathbb{L}^2(\mathcal{T})$, $g_0 \in H^{\frac{\lambda-1}{3}}(0,T)$, $g \in [L^2(0,T)]^N$, $p\in [H^\frac{\lambda+1}{3}(0,T)]^N$, the problem \eqref{kdV} has a unique solution $u\in \mathcal{X}_{0,T}$ which satisfies
\begin{align*}
	\|u\|_{\mathcal{X}_{0,T}}&\leq C\left(
	\begin{array}{c}
		\|u^0\|_{\mathbb{L}^2(\mathcal{T})}+\|g_0\|_{H^\frac{\lambda-1}{3}(0,T)}+\|g\|_{L^2(0,T)]^N}+\|p\|_{[H^\frac{\lambda+1}{3}(0,T)]^N}
	\end{array}
	\right),
\end{align*}
for some positive constant $C>0$ depending on $T$ and $l_1,...,l_N$.
\end{theorem}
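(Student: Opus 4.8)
The plan is to upgrade the local result of Theorem~\ref{main} (applied with $a=0$, $s=0$ and $f\equiv0$, so that \eqref{NLKdV} reduces to \eqref{kdV}) to a global one by iterating in time, using the global a priori estimate of Proposition~\ref{energy_s=0} to prevent the local existence time from shrinking to zero. First I would check that the hypotheses of Theorem~\ref{main} at level $s=0$ are met: since $\lambda\ge1$ one has the embeddings $H^{\frac{\lambda-1}{3}}(0,T)\hookrightarrow L^{2}(0,T)\hookrightarrow H^{-\frac13}(0,T)$ and $H^{\frac{\lambda+1}{3}}(0,T)\hookrightarrow H^{\frac13}(0,T)$, so $(u^{0},g_{0},g,p)$ lies in $\mathbb{L}^{2}(\mathcal{T})\times H^{-\frac13}(0,T)\times[L^{2}(0,T)]^{N}\times[H^{\frac13}(0,T)]^{N}$, and no $s$-compatibility condition is required when $s=0$.

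The \emph{key point} is the nature of the local existence time. Inspecting the proof of Theorem~\ref{main} with $a=0$, $s=0$, the construction produces a solution on $[0,\theta]$ whenever $\theta\in(0,T)$ satisfies $C^{2}(\theta^{1/2}+\theta^{1/3})\,r<\tfrac1{12}$, with
\begin{align*}
r=4C\Big(\|u^{0}\|_{\mathbb{L}^{2}(\mathcal{T})}+\|g_{0}\|_{H^{-\frac13}(0,T)}+\|g\|_{[L^{2}(0,T)]^{N}}+\|p\|_{[H^{\frac13}(0,T)]^{N}}\Big);
\end{align*}
in particular $\theta$ depends on the state only through $\|u^{0}\|_{\mathbb{L}^{2}(\mathcal{T})}$. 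On the other hand, Proposition~\ref{energy_s=0} (whose proof applies unchanged on any subinterval $[0,T']\subseteq[0,T]$) shows that every solution of \eqref{kdV} on $[0,T']$ satisfies $\sup_{t\in[0,T']}\|u(t,\cdot)\|_{\mathbb{L}^{2}(\mathcal{T})}\le M$, where $M$ depends only on $T$ and on $\|u^{0}\|_{\mathbb{L}^{2}(\mathcal{T})}$, $\|g_{0}\|_{H^{\frac{\lambda-1}{3}}(0,T)}$, $\|g\|_{[L^{2}(0,T)]^{N}}$, $\|p\|_{[H^{\frac{\lambda+1}{3}}(0,T)]^{N}}$. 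Then I fix $\delta\in(0,T)$, independent of $k$ below, with $C^{2}(\delta^{1/2}+\delta^{1/3})\cdot 4C\big(M+\|g_{0}\|_{H^{-\frac13}(0,T)}+\|g\|_{[L^{2}(0,T)]^{N}}+\|p\|_{[H^{\frac13}(0,T)]^{N}}\big)<\tfrac1{12}$.

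Next I would run the iteration. Theorem~\ref{main} gives a solution $u\in\mathcal{X}_{0,\delta}$ on $[0,\delta]$. Assuming the solution is already constructed on $[0,t_{k}]$ with $t_{k}=\min(k\delta,T)$, Proposition~\ref{energy_s=0} forces $\|u(t_{k},\cdot)\|_{\mathbb{L}^{2}(\mathcal{T})}\le M$; translating time by $t_{k}$ and applying Theorem~\ref{main} with initial datum $u(t_{k},\cdot)$ and with $g_{0},g,p$ restricted to $[t_{k},T]$ (whose norms are dominated by those on $[0,T]$), the choice of $\delta$ yields an extension on $[t_{k},t_{k}+\delta]$ lying in $\mathcal{X}_{0,\delta}$; by the local uniqueness in Theorem~\ref{main} the two pieces agree on the overlap and glue into a solution on $[0,t_{k+1}]$. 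After $N_{\delta}:=\lceil T/\delta\rceil$ steps one reaches $[0,T]$, with $u$ belonging to $\mathcal{X}_{0,\delta}$ on each of the $N_{\delta}$ subintervals with a uniform bound. Summing these $\mathbb{B}_{0,T}^{*}$-norms, and using that the $H^{\sigma}(0,T)$-norm of a continuous function is controlled by finitely many $H^{\sigma}$-norms over an interval covering (harmless here since $|\sigma|<\tfrac12$ for the relevant $\sigma\in\{-\tfrac13,0,\tfrac13\}$), gives $u\in\mathcal{X}_{0,T}$ together with the asserted estimate, the constant $C$ absorbing $N_{\delta}$ and hence depending on $T$ and $l_{1},\dots,l_{N}$. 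Equivalently, one may reinsert the global $\mathbb{B}_{0,T}$-bound into the linear estimate of Proposition~\ref{W.P._s=0} with source $-u\partial_{x}u$ and vertex datum $g_{0}-\tfrac N3 u_{1}^{2}(\cdot,0)$, controlling $\|u\partial_{x}u\|_{L^{1}(0,T;\mathbb{L}^{2}(\mathcal{T}))}$ by Lemma~\ref{lemma 2.4} and $\|u_{1}^{2}(\cdot,0)\|_{H^{-1/3}(0,T)}$ by Lemma~\ref{lemma 2.1}.

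Finally, uniqueness on $[0,T]$ follows from local uniqueness and a continuation argument: if $u,\tilde u\in\mathcal{X}_{0,T}$ both solve \eqref{kdV}, the set where they coincide is a subinterval that, by the uniform-time local result applied at its right endpoint, cannot be proper, so $u=\tilde u$. The only genuine difficulty is the one handled in the second paragraph — matching the a priori control, which is exactly $\sup_{t}\|u(t,\cdot)\|_{\mathbb{L}^{2}(\mathcal{T})}$, to the quantity governing the local existence time, so that the iteration does not degenerate; the remaining steps are bookkeeping, the mildest nuisance being the recombination of the fractional-in-time trace norms when gluing.
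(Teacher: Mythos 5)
Your proposal is correct and follows essentially the same route as the paper: the paper also upgrades the local fixed-point construction of Theorem \ref{main} to a global result by using the a priori bound of Proposition \ref{energy_s=0} to fix a radius $r$ (hence a uniform step $\theta$) valid at every restart, and then repeats the argument on subintervals of length $\theta$ until $[0,T]$ is covered. Your write-up simply makes explicit some details the paper leaves implicit (the embeddings for $\lambda\geq 1$, the gluing of the fractional-in-time trace norms, and the continuation argument for uniqueness).
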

\begin{proof}
Thanks to Proposition \ref{energy_s=0}, the radius $r$ in the proof of Theorem \ref{main} can be chosen such that
\begin{align*}
	\|u(\theta,\cdot)\|_{\mathbb{L}^2(\mathcal{T})}+\|g_0\|_{H^\frac{\lambda-1}{3}(0,T)}+\|g\|_{L^2(0,T)]^N}+\|p\|_{[H^\frac{\lambda+1}{3}(0,T)]^N}\leq r.
\end{align*}
Hence, the same radius $r$ and, consequently, the same $\theta$, can be used for data $u(\theta,\cdot),\ g_0,\ g$ and $p$, in order to obtain a solution in $[\theta,2\theta]$. In this way, the fixed point argument can be repeated on subintervals of length $\theta$ until the entire interval $[0,T]$ is covered.
\end{proof}

\subsection{The case $s=3$: Global a priori estimate} For simplicity we denote
\begin{align*}
	\|(u^0,g_0,g,p)\|_{s,\lambda}=\|u^0\|_{\mathbb{H}^s(\mathcal{T})}+\|g_0\|_{H^\frac{s+\lambda-1}{3}(0,T)}+\|g\|_{H^\frac{s}{3}(0,T)]^N}+\|p\|_{[H^\frac{s+\lambda+1}{3}(0,T)]^N}
\end{align*}
and we reiterate that constants can be adjusted per line, with the dependencies deemed important made explicit. In addition we denote by $\mathbb{X}_{s,T}^\lambda$ the set of $s$-compatible data
$$u^0 \in \mathbb{H}^{s}(\mathcal{T}), g_0 \in H^{\frac{s+\lambda-1}{3}}(0,T),\ g \in [H^\frac{s}{3}(0,T)]^N\quad \text{and}\quad p\in [H^\frac{s+\lambda+1}{3}(0,T)]^N.$$
The first result in this section plays an auxiliary role, and its proof follows from a combination of a fixed-point argument and Proposition \ref{W.P._s=0}.
\begin{lemma}\label{lemma 5.1}
	Let $T>0$, $v^0 \in \mathbb{L}^2(\mathcal{T}),\ \tilde{g_0}\in H^{-\frac{1}{3}}(0,T),\ \tilde{g}\in [L^2(0,T)]^N,\ \tilde{p} \in [H^\frac{1}{3}(0,T)]^N,\ f \in L^1(0,T;\mathbb{L}^2(\mathcal{T}))$ and $a \in \mathcal{X}_{0,T}$ be given. Then, the problem
	\begin{align}
		\left\{
		\begin{array}{ll}
			\partial_t v_j+\partial_x [(1+a_j)v_j]+\partial_x^3 v_j=f_j,&t \in (0,T),\ x \in (0,l_j),\\
			v_j(t,0)=v_1(t,0),&t\in (0,T),\\
			\displaystyle\sum_{j=1}^N\partial_x^2 v_j(t,0)=-\alpha v_1(t,0)-\frac{2N}{3}a_1(t,0)v_1(t,0)+\tilde{g}_0(t),&t \in (0,T),\\				
			v_j(t,l_j)=\tilde{p}_j(t),\ \ \ \ \ \partial_x v_j(t,l_j)=\tilde{g}_j(t),&t\in (0,T),\\
			v_j(0,x)=v^0(x),&x \in (0,l_j)
		\end{array}
		\right.
	\end{align}
	has a unique solution $v \in \mathcal{X}_{0,T}$ which satisfies
	\begin{align*}
		\|v\|_{\mathcal{X}_{0,T}}\leq C\|a\|_{\mathcal{X}_{0,T}}\left(\|v^0\|_{\mathbb{L}^2(\mathcal{T})}+\|\tilde{g}_0\|_{H^{-\frac{1}{3}}(0,T)}+\|\tilde{g}\|_{[L^2(0,T)]^N}+\|\tilde{p}\|_{[H^\frac{1}{3}(0,T)]^N}+\|f\|_{L^1(0,T;\mathbb{L}^2(\mathcal{T}))}\right),
	\end{align*}
	for some positive constant $C=C(t)>0$.
\end{lemma}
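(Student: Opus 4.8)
The plan is to run a contraction–mapping argument on a short time interval $[0,\theta]$, treating the two terms that carry the coefficient $a$ — the interior term $\partial_x(a_jv_j)$ and the boundary term $\tfrac{2N}{3}a_1(\cdot,0)v_1(\cdot,0)$ in the central-node condition — as perturbative forcing and boundary data, reducing everything to the already-established $s=0$ linear theory, and then patching local solutions to cover $[0,T]$.

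\emph{Set-up of the map.} For $\theta\in(0,T]$ I would define $\Gamma\colon\mathcal{X}_{0,\theta}\to\mathcal{X}_{0,\theta}$ by letting $\Gamma w$ be the solution furnished by Proposition~\ref{W.P._s=0} of the linear problem \eqref{LkdV} with interior forcing $f_j-\partial_x(a_jw_j)$, central-node datum $\tilde g_0-\tfrac{2N}{3}a_1(\cdot,0)w_1(\cdot,0)$, lateral data $\tilde p_j,\tilde g_j$, and initial datum $v^0$. Since $\mathbb{B}^*_{0,T}=\mathbb{B}_{0,T}$, the space $\mathcal{X}_{0,\theta}$ is exactly the one in which Proposition~\ref{W.P._s=0} produces solutions (the trace regularity $\partial_x^k v_j\in C_b([0,l_j];H^{(1-k)/3})$ lands in $L^\infty_x(0,l_j;H^{(1-k)/3})$). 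To see $\Gamma$ is well defined I must check the perturbed data are admissible: as $a,w\in\mathcal{X}_{0,\theta}\subset\mathbb{B}_{0,\theta}$, Lemma~\ref{lemma 2.4}(i) with $s=0$ gives $\partial_x(a_jw_j)=(\partial_x a_j)w_j+a_j\partial_x w_j\in L^1(0,\theta;L^2(0,l_j))$ with norm $\le C(\theta^{1/2}+\theta^{1/3})\|a\|_{\mathcal{X}_{0,T}}\|w\|_{\mathcal{X}_{0,\theta}}$; and since the $k=0$ traces at $x=0$ satisfy $a_1(\cdot,0),w_1(\cdot,0)\in H^{1/3}(0,\theta)$, Lemma~\ref{lemma 2.1} with $s=0$ gives $a_1(\cdot,0)w_1(\cdot,0)\in H^{-1/3}(0,\theta)$ with norm $\le C\theta^{\alpha}\|a\|_{\mathcal{X}_{0,T}}\|w\|_{\mathcal{X}_{0,\theta}}$, so the central-node datum stays in $H^{-1/3}(0,\theta)$.

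\emph{Contraction on a ball and local solution.} Feeding these two bounds into the estimate of Proposition~\ref{W.P._s=0} yields
\begin{align*}
\|\Gamma w\|_{\mathcal{X}_{0,\theta}}&\le C\big(\|v^0\|_{\mathbb{L}^2(\mathcal{T})}+\|\tilde g_0\|_{H^{-1/3}(0,T)}+\|\tilde g\|_{[L^2(0,T)]^N}+\|\tilde p\|_{[H^{1/3}(0,T)]^N}+\|f\|_{L^1(0,T;\mathbb{L}^2(\mathcal{T}))}\big)\\
&\quad+C\big(\theta^{1/2}+\theta^{1/3}+\theta^{\alpha}\big)\|a\|_{\mathcal{X}_{0,T}}\|w\|_{\mathcal{X}_{0,\theta}},
\end{align*}
and, since $\Gamma$ is affine with linear part $w\mapsto\Gamma w-\Gamma 0$, the same computation gives $\|\Gamma w-\Gamma w'\|_{\mathcal{X}_{0,\theta}}\le C(\theta^{1/2}+\theta^{1/3}+\theta^{\alpha})\|a\|_{\mathcal{X}_{0,T}}\|w-w'\|_{\mathcal{X}_{0,\theta}}$. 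Choosing $\theta\in(0,T]$ so small that $C(\theta^{1/2}+\theta^{1/3}+\theta^{\alpha})\|a\|_{\mathcal{X}_{0,T}}<\tfrac12$, and taking $r$ to be twice the first bracket, $\Gamma$ maps the ball $B_r\subset\mathcal{X}_{0,\theta}$ into itself and is a $\tfrac12$-contraction there; the Banach fixed-point theorem gives a unique $v\in B_r$ solving the problem on $[0,\theta]$, with $\|v\|_{\mathcal{X}_{0,\theta}}\le r$.

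\emph{Globalization and main obstacle.} The constants in Lemmas~\ref{lemma 2.1}, \ref{lemma 2.4} and Proposition~\ref{W.P._s=0} are independent of the initial datum, so $\theta$ can be chosen to depend only on $T$ and $\|a\|_{\mathcal{X}_{0,T}}$. Hence the construction can be repeated on $[\theta,2\theta],[2\theta,3\theta],\dots$, using at each step the trace $v(k\theta,\cdot)\in\mathbb{L}^2(\mathcal{T})$ (controlled by $\mathcal{X}_{0,\theta}\hookrightarrow C([0,\theta];\mathbb{L}^2(\mathcal{T}))$) as new initial datum and the restrictions of $\tilde g_0,\tilde g,\tilde p,f$ and $a$ to the subinterval; after $\lceil T/\theta\rceil$ steps one covers $[0,T]$, concatenation gives $v\in\mathcal{X}_{0,T}$, and uniqueness propagates from each subinterval to the next. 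Tracking the norm through the finitely many steps produces the asserted estimate, with the constant depending on $\|a\|_{\mathcal{X}_{0,T}}$ and $T$. I expect the only real point requiring care to be the first paragraph's verification that the two $a$-dependent perturbations lie in exactly the classes demanded by Proposition~\ref{W.P._s=0} — $L^1$ in time of $\mathbb{L}^2$ for the interior forcing and $H^{-1/3}(0,T)$ for the central-node datum — and, crucially, that their norms come with a positive power of $\theta$; this is precisely what the bilinear estimates of Lemma~\ref{lemma 2.4}(i) and Lemma~\ref{lemma 2.1} (specialized to $s=0$) supply, after which the argument is the now-standard fixed-point-plus-continuation scheme already used in Propositions~\ref{prop 2.5} and \ref{W.P.LKdV-a}.
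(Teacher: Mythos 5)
Your proposal is correct and follows essentially the same route as the paper: the identical contraction map $\Gamma_a$ built on the $s=0$ linear theory (Proposition~\ref{W.P._s=0}), with the interior term $\partial_x(a_jw_j)$ handled by Lemma~\ref{lemma 2.4} and the node term $a_1(\cdot,0)w_1(\cdot,0)$ by Lemma~\ref{lemma 2.1}, followed by the smallness choice of $\theta$ depending only on $\|a\|_{\mathcal{X}_{0,T}}$, the Banach fixed point, and continuation. The only cosmetic difference is that your final bound reads $C\bigl(\|a\|_{\mathcal{X}_{0,T}},T\bigr)$ times the data norms rather than the literal prefactor $\|a\|_{\mathcal{X}_{0,T}}$ appearing in the statement, which is in substance what the argument (in the paper as well) actually yields.
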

\begin{proof}
	Consider $\theta\in [0,T]$ and the map $\Gamma_a:\mathcal{X}_{0,\theta}\rightarrow \mathcal{X}_{0,\theta}$ where, for each $w \in \mathcal{X}_{0,\theta}$, $\Gamma_a w$ is the solution of
	\begin{align*}
		\left\{
		\begin{array}{ll}
			\partial_t v_j+\partial_x v_j+\partial_x^3 v_j=f_j-\partial_x (a_jw_j),&t \in (0,\theta),\ x \in (0,l_j),\ j=1,...,N,\\
			v_j(t,0)=v_1(t,0),&t\in (0,\theta),\ j=1,...,N,\\
			\displaystyle\sum_{j=1}^N\partial_x^2 v_j(t,0)=-\alpha v_1(t,0)-\frac{2N}{3}a_1(t,0)w_1(t,0)+\tilde{g}_0(t),&t \in (0,\theta),\\				
			v_j(t,l_j)=\tilde{p}_j(t),\ \ \ \ \ \partial_x v_j(t,l_j)=\tilde{g}_j(t),&t\in (0,\theta),\ j=1,...,N,\\
			v_j(0,x)=v_j^0(x),&x \in (0,l_j),\ j=1,...,N.
		\end{array}
		\right.
	\end{align*}
	Thanks to Corollary \ref{W.P._s in [0,3]}, Lemmas \ref{lemma 2.1} and \ref{lemma 2.4}, we see that $\Gamma_a$ is well-defined. Moreover, defining
	\begin{align*}
		r=&2C\left(	\|v^0\|_{\mathbb{L}^2(\mathcal{T})}+\|\tilde{g}_0\|_{H^{-\frac{1}{3}(0,T)}}+\|\tilde{g}\|_{[L^2(0,T)]^N}+\|\tilde{p}\|_{[H^\frac{1}{3}(0,T)]^N}+\|f\|_{L^1(0,T;\mathbb{L}^2(\mathcal{T}))}\right)
	\end{align*}
	we have
	\begin{align*}
		\|\Gamma_a v\|_{\mathcal{X}_{0,\theta}}\leq&C\left(\|v^0\|_{\mathbb{L}^2(\mathcal{T})}+\|\tilde{g}_0\|_{H^{-\frac{1}{3}}(0,\theta)}+\frac{2N}{3}\|a_1(t,0)w_1(t,0)\|_{H^{-\frac{1}{3}}(0,\theta)}+\|\tilde{g}\|_{L^2(0,T)]^N}\right.\\
		&\left.+\|\tilde{p}\|_{[H^\frac{1}{3}(0,\theta)]^N}+\|f-\partial_x(aw)\|_{L^1(0,\theta;\mathbb{L}^2(\mathcal{T}))}\right)\\
		&\leq  \frac{r}{2}+C\left(\|\partial_x(aw)\|_{L^1(0,\theta;\mathbb{L}^2(\mathcal{T}))}+\frac{2N}{3}\|a_1(t,0)w_1(t,0)\|_{H^{-\frac{1}{3}}(0,\theta)}\right)\\
		&\leq \frac{r}{2}+C\left((\theta^\frac{1}{2}+\theta^\frac{1}{3})\|a\|_{\mathbb{B}_{0,\theta}}\|w\|_{\mathbb{B}_{0,\theta}}+\theta^\alpha\frac{2N}{3}\|a_1(\cdot,0)\|_{H^\frac{1}{3}(0,\theta)}\|w_1(\cdot,0)\|_{H^\frac{1}{3}(0,\theta)}\right)\\
		&\leq \frac{r}{2}+C\left(\theta^\frac{1}{2}+\theta^\frac{1}{3}+\frac{2N}{3}\theta^\alpha\right)\|a\|_{\mathcal{X}_{0,\theta}}\|w\|_{\mathcal{X}_{0,\theta}}.
	\end{align*}
Choosing $\theta\in (0,T)$ such that
	\begin{align*}
		C\left(\theta^\frac{1}{2}+\theta^\frac{1}{3}+\frac{2N}{3}\theta^\alpha\right)\|a\|_{\mathcal{X}_{0,T}}<\frac{1}{2}
	\end{align*}
	we have $\Gamma_a(B)\subset B$ where
	\begin{align*}
		B=\{v \in \mathcal{X}_{0,\theta};\ \|v\|_{\mathcal{X}_{0,\theta}}\leq r\}.
	\end{align*}
	Moreover $\Gamma_a:B\rightarrow B$ is a contraction since
	\begin{align*}
		\|\Gamma_a v-\Gamma_a w\|_{\mathcal{X}_{0,\theta}}&\leq C\left(\theta^\frac{1}{2}+\theta^\frac{1}{3}+\frac{2N}{3}\theta^\alpha\right)\|a\|_{\mathcal{X}_{0,T}}\|v-w\|_{\mathcal{X}_{0,\theta}}.
	\end{align*}
	
	Now, using the Banach fixed point theorem, it follows that $\Gamma$ has a fixed point $v \in B$, that is, $u$ solves \eqref{LkdV-a} for $t \in [0,\beta]$. Furthermore, 
\begin{align*}
	\|v\|_{\mathcal{X}_{0,\theta}}=&\|\Gamma_a v\|_{\mathcal{X}_{0,\theta}}\\
	\leq& C\left(\theta^\frac{1}{2}+\theta^\frac{1}{3}+\frac{2N}{3}\theta^\alpha\right)\|a\|_{\mathcal{X}_{0,T}}\|v\|_{\mathcal{X}_{0,\theta}}\\
	\leq& C\left(\theta^\frac{1}{2}+\theta^\frac{1}{3}+\frac{2N}{3}\theta^\alpha\right)\|a\|_{\mathcal{X}_{0,T}}r\\
	\leq& 2C^2\left(\theta^\frac{1}{2}+\theta^\frac{1}{3}+\frac{2N}{3}\theta^\alpha\right)\|a\|_{\mathcal{X}_{0,T}}\left(	\|v^0\|_{\mathbb{L}^2(\mathcal{T})}+\|\tilde{g}_0\|_{H^{-\frac{1}{3}(0,T)}}+\|\tilde{g}\|_{[L^2(0,T)]^N}\right.\\
	&\left.+\|\tilde{p}\|_{[H^\frac{1}{3}(0,T)]^N}+\|f\|_{L^1(0,T;\mathbb{L}^2(\mathcal{T}))}	\right).
\end{align*}
	Since $\theta$ does not depend on the datas $(u^0,g_0,g,p,f)$, using a standard continuation extension argument, the solution $u$ can be extended to the interval $[0,T]$ and the following estimate holds
	\begin{align*}
	\|v\|_{\mathcal{X}_{0,T}}\leq &C\left(\theta^\frac{1}{2}+\theta^\frac{1}{3}+\frac{2N}{3}\theta^\alpha\right)\|a\|_{\mathcal{X}_{0,T}}\left(
		\|v^0\|_{\mathbb{L}^2(\mathcal{T})}+\|\tilde{g}_0\|_{H^{-\frac{1}{3}(0,T)}}+\|\tilde{g}\|_{[L^2(0,T)]^N}\right.\\
		&\left.+\|\tilde{p}\|_{[H^\frac{1}{3}(0,T)]^N}+\|f\|_{L^1(0,T;\mathbb{L}^2(\mathcal{T}))},
	\right)
	\end{align*}
	for some positive constant $C:=C(T)$ and $\alpha>0$, giving the lemma.
\end{proof}
\begin{proposition}\label{energy_s=3}
	Let $T>0$ and $\lambda\geq 1$ be given. If $u\in \mathbb{\mathcal{X}}_{3,T}$ is a solution of \eqref{kdV} corresponding to the compatible data
	\begin{align*}
		u^0 \in \mathbb{H}^{3}(\mathcal{T}),\quad g_0 \in H^{\frac{3+\lambda-1}{3}}(0,T),\quad g \in [H^\frac{3}{3}(0,T)]^N\quad\text{and}\quad p\in [H^\frac{3+\lambda+1}{3}(0,T)]^N,
	\end{align*}
	then  $u(t,x)$ is given by
	\begin{align}\label{nlu_j}
		u_j(t,\cdot)=u_j^0+\int_0^tv_j(s,\cdot)ds,\ j=1,...,N,
	\end{align}
	where $v(t,x)$ solves the problem
	\begin{align}\label{nlv_j}
		\left\{
		\begin{array}{ll}
			\partial_t v_j+\partial_x [(1+u_j)v_j]+\partial_x^3 v_j=0,&t \in (0,T),\ x \in (0,l_j),\\
			v_j(t,0)=v_1(t,0),&t\in (0,T),\\
			\displaystyle\sum_{j=1}^N\partial_x^2 v_j(t,0)=-\alpha v_1(t,0)-\frac{2N}{3}u_1(t,0)v_1(t,0)+g_0'(t),&t \in (0,T)\\				
			v_j(t,l_j)=p_j'(t),\ \ \ \ \ \partial_x v_j(t,l_j)=g_j'(t),&t\in (0,T),\\
			v_j(0,x)=-\partial_xu_j^0(x)-\partial_x^3u_j^0(x)-(u_j^0\partial_x u_j^0)(x),&x \in (0,l_j).
		\end{array}
		\right.
	\end{align}
	Moreover
		\begin{equation}\label{rrrrrr}
		\begin{split}
			\|u\|_{\mathbb{B}_{3,T}}\leq& C_1\left(\|(u^0,g_0,g,p\right)\|_{0,\lambda})\left(\|u^0\|_{\mathbb{H}^3(\mathcal{T})}+\|g_0\|_{H^\frac{3+\lambda-1}{3}(0,T)}\right.\\
			&\left.+\|g\|_{[H^\frac{3}{3}(0,T)]^N}+\|p\|_{[H^\frac{3+\lambda+1}{3}(0,T)]^N}\right),
			\end{split}
		\end{equation}
	where $C_1(r)=\alpha r^3+\beta r^2+\gamma r$ and $\alpha,\beta,\gamma>0$ depends on $T$.
\end{proposition}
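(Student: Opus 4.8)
The plan is to follow the architecture of the proof of Proposition \ref{W.P._s=3}, now for the nonlinear equation: replacing the use of Proposition \ref{W.P._s=0} for the time‑differentiated system by Lemma \ref{lemma 5.1}, and breaking the circularity that the quadratic term would otherwise introduce by feeding in the global $L^2$‑bound already obtained in Theorem \ref{global s=0} (equivalently Proposition \ref{energy_s=0}). Throughout I abbreviate $r_0:=\|(u^0,g_0,g,p)\|_{0,\lambda}$ and $r_3:=\|(u^0,g_0,g,p)\|_{3,\lambda}$, and record that $r_0\le Cr_3$ by Sobolev embedding, which is what will let the ``pure $r_0$‑powers'' that arise be reattached to a factor $r_3$.

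First I would establish the representation \eqref{nlu_j}--\eqref{nlv_j}. Since $u\in\mathcal X_{3,T}$ solves \eqref{kdV}, differentiating the equation and all four boundary conditions in $t$, and writing $u_j\partial_x u_j=\tfrac12\partial_x(u_j^2)$ so that $\partial_t(u_j\partial_x u_j)=\partial_x(u_j\,\partial_t u_j)$, shows that $v_j:=\partial_t u_j$ solves \eqref{nlv_j} with coefficient $u$, boundary data $g_0'$, $p_j'$, $g_j'$, and $v_j(0,\cdot)=-\partial_x u_j^0-\partial_x^3 u_j^0-u_j^0\partial_x u_j^0$, the last being read off from the first equation of \eqref{kdV} at $t=0$. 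To make this rigorous and to get $u_j(t,\cdot)=u_j^0+\int_0^t v_j(s,\cdot)\,ds$ with $\partial_x^\beta$ and $\int_0^t$ commuting, I would repeat the argument of Lemma \ref{lemma for v}: the inclusion $u\in\mathcal X_{3,T}\subset\mathbb B_{3,T}^*\subset H^1(0,T;\mathbb H^1(\mathcal T))$ together with the equation gives $v\in C([0,T];\mathbb L^2(\mathcal T))\cap L^2(0,T;\mathbb H^1(\mathcal T))$, which legitimizes the interchanges. Using $\lambda\ge1$, the Sobolev derivative‑mappings already employed in Proposition \ref{W.P._s=3} (so that $g_0'\in H^{-1/3}(0,T)$, $p_j'\in H^{1/3}(0,T)$, $g_j'\in L^2(0,T)$), the embedding $\mathbb H^3(\mathcal T)\hookrightarrow W^{1,\infty}(\mathcal T)$ for the quadratic initial datum, and $\mathcal X_{3,T}\subset\mathcal X_{0,T}$, the data of \eqref{nlv_j} meet the hypotheses of Lemma \ref{lemma 5.1}; uniqueness there identifies $v$ with the $\mathcal X_{0,T}$‑solution it produces, so $v\in\mathcal X_{0,T}$.

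For the a priori estimate I would argue in three moves. Move one: since $u\in\mathcal X_{3,T}\subset\mathcal X_{0,T}$ solves \eqref{kdV} and its data, reduced through the embeddings, satisfy the hypotheses of Theorem \ref{global s=0}, uniqueness gives $\|u\|_{\mathcal X_{0,T}}\le Cr_0$. Move two: Lemma \ref{lemma 5.1} applied with $a=u$ bounds $\|v\|_{\mathcal X_{0,T}}$ by an increasing function of $\|u\|_{\mathcal X_{0,T}}\le Cr_0$ times $\|v(0,\cdot)\|_{\mathbb L^2(\mathcal T)}+\|g_0'\|_{H^{-1/3}(0,T)}+\|g'\|_{[L^2(0,T)]^N}+\|p'\|_{[H^{1/3}(0,T)]^N}$; the three boundary terms are $\le Cr_3$ by the derivative‑mappings, and $\|v(0,\cdot)\|_{\mathbb L^2}\le C\|u^0\|_{\mathbb H^3(\mathcal T)}+C\|u^0\partial_x u^0\|_{\mathbb L^2(\mathcal T)}$ with the quadratic term estimated by a Gagliardo--Nirenberg inequality $\|u^0\partial_x u^0\|_{\mathbb L^2}\le C\|u^0\|_{\mathbb L^2}^{\theta_1}\|u^0\|_{\mathbb H^3}^{\theta_2}$ that Young's inequality and $r_0\le Cr_3$ convert into a bound linear in $r_3$ with a polynomial coefficient in $r_0$; hence $\|v\|_{\mathcal X_{0,T}}\le P(r_0)\,r_3$ for a polynomial $P$. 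Move three: as at the end of the proof of Proposition \ref{W.P._s=3}, from $\partial_x u_j+\partial_x^3 u_j=-v_j-u_j\partial_x u_j$ and the coercivity estimate \eqref{H^3<P_j} (\cite[Lemma A.2]{KdV_flatness}) one gets $\|u_j(t,\cdot)\|_{H^3(0,l_j)}\le C(\|u_j(t,\cdot)\|_{L^2}+\|v_j(t,\cdot)\|_{L^2}+\|u_j(t,\cdot)\partial_x u_j(t,\cdot)\|_{L^2})$, and after bounding the quadratic term by Gagliardo--Nirenberg and absorbing the ensuing top‑order contribution by Young's inequality (using $\sup_t\|u(t,\cdot)\|_{\mathbb L^2}\le Cr_0$), taking the supremum in $t$ delivers the $C([0,T];\mathbb H^3(\mathcal T))$‑bound; differentiating that identity once more in $x$, squaring and integrating in $t$ (using $v,u\in L^2(0,T;\mathbb H^1(\mathcal T))$) delivers the $L^2(0,T;\mathbb H^4(\mathcal T))$‑bound. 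Adding these, inserting move two, and collecting the powers of $r_0$ that arise produces \eqref{rrrrrr} with $C_1$ a polynomial in $r_0$ of the stated shape.

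The principal obstacle is not the (essentially mechanical) repetition of the linear $s=3$ argument but the nonlinear bookkeeping: $u\partial_x u$ — and, through \eqref{nlv_j}, the boundary quadratic $u_1(\cdot,0)v_1(\cdot,0)$ — must be controlled at the $\mathbb L^2$, $\mathbb H^3$ and $\mathbb H^4$ levels by a product of a polynomial in the globally bounded low norm $r_0$ and the high norm $r_3$ to the first power. This requires using Gagliardo--Nirenberg interpolation to peel off the globally controlled $\mathbb L^2$ factor and Young's inequality to simultaneously absorb the top‑order term into the left‑hand side and keep the $\mathbb H^3$‑level dependence linear; it is the careful tracking of the resulting polynomial degree in $r_0$, via $r_0\le Cr_3$, that produces the form of $C_1$. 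A subsidiary technical point, handled as in Lemma \ref{lemma for v}, is the justification of the time‑differentiation and of the commutation of $\partial_x$ with $\int_0^t$.
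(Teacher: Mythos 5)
Your proposal is correct and its skeleton coincides with the paper's proof: differentiate \eqref{kdV} in time (with $\partial_t(u_j\partial_x u_j)=\partial_x(u_jv_j)$, so $v=\partial_t u$ solves \eqref{nlv_j}), justify the representation \eqref{nlu_j} as in Lemma \ref{lemma for v}, apply Lemma \ref{lemma 5.1} with $a=u$ together with the global $s=0$ bound $\|u\|_{\mathcal{X}_{0,T}}\le C\|(u^0,g_0,g,p)\|_{0,\lambda}$ from Theorem \ref{global s=0}, recover $\|u(t,\cdot)\|_{\mathbb{H}^3(\mathcal{T})}$ from $\partial_x u+\partial_x^3u=-v-u\partial_x u$ via \eqref{H^3<P_j}, and then get the $L^2(0,T;\mathbb{H}^4(\mathcal{T}))$ piece by differentiating that identity once in $x$. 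The genuine divergence is in the control of $\|u\partial_x u\|_{C([0,T];\mathbb{L}^2(\mathcal{T}))}$: the paper observes $\partial_t(u\partial_x u)=v\partial_x u+u\partial_x v$, applies the bilinear estimates of Lemma \ref{lemma 2.4} to place $u\partial_x u$ in $W^{1,1}(0,T;\mathbb{L}^2(\mathcal{T}))\hookrightarrow C([0,T];\mathbb{L}^2(\mathcal{T}))$, and so obtains a bound $M_1(\|u\|_{\mathbb{B}_{0,T}}+\|v\|_{\mathbb{B}_{0,T}})$ with $M_1\simeq\|(u^0,g_0,g,p)\|_{0,\lambda}$, \emph{linear} in the low norms of $u$ and $v$; you instead use fixed-time Gagliardo--Nirenberg ($\|u\partial_x u\|_{L^2}\lesssim\|u\|_{L^2}^{4/3}\|u\|_{H^3}^{2/3}$, say) plus Young's inequality to absorb the $H^3$ contribution into the left-hand side. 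Your absorption is legitimate precisely because $u\in\mathcal{X}_{3,T}$ guarantees $\|u(t,\cdot)\|_{\mathbb{H}^3(\mathcal{T})}<\infty$ at each $t$, and your handling of $\|u^0\partial_x u^0\|_{\mathbb{L}^2}$ by the same device is fine. The price is bookkeeping: the Young step generates pure powers of $r_0=\|(u^0,g_0,g,p)\|_{0,\lambda}$ of degree four (e.g. $r_0^4$), which do not by themselves fit the announced form $C_1(r_0)\,r_3$ with $C_1(r)=\alpha r^3+\beta r^2+\gamma r$; they must be reattached through $r_0\le Cr_3$, exactly as you anticipate, and you should state that reattachment explicitly when ``collecting the powers of $r_0$'', since with a different choice of Gagliardo--Nirenberg/Young exponents the resulting polynomial degree would differ. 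The paper's $W^{1,1}$-in-time trick avoids this and produces the cubic $C_1$ directly; your route is slightly more elementary (it needs no time-regularity of the nonlinearity beyond what is used anyway) but yields the stated shape of $C_1$ only after that final renormalization.
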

\begin{proof}
Suppose that $u\in \mathcal{X}_{3,T}$ is a corresponding solution of \eqref{kdV}. Deriving the equalities in \eqref{kdV} with respect to $t$ we have
\begin{align*}
	\left\{
	\begin{array}{ll}
		\partial_t^2 u_j+\partial_x \partial_tu_j+\partial_x^3 \partial_tu_j+\partial_tu_j\partial_x u_j+u_j\partial_x \partial_tu_j=0,&\\
		\partial_tu_j(t,0)= \partial_tu_1(t,0),&\\
		\displaystyle\sum_{j=1}^N\partial_x^2  \partial_tu_j(t,0)=-\alpha  \partial_tu_1(t,0)-\frac{2N}{3}u_1(t,0)\partial_tu_1(t,0)+g_0'(t),&\\				
		\partial_tu_j(t,l_j)=p_j'(t),\ \ \ \ \ \partial_x \partial_tu_j(t,l_j)=g_j'(t).&
	\end{array}
	\right.
\end{align*}
Moreover
\begin{align*}
	\partial_tu_j(0,x)&=-\partial_xu_j(0,x)-\partial_x^3u_j(0,x)-u_j(0,x)\partial_x u_j(0,x)\\
	&=-\partial_xu_j^0(x)-\partial_x^3u_j^0(x)-u_j^0(x)\partial_xu_j^0(x).
\end{align*}
Setting $v_j=\partial_tu_j$ it follows that $v_j$ solves \eqref{nlv_j} and \eqref{nlu_j} holds. Furthermore, Lemma \ref{lemma 5.1} provides
\begin{equation}\label{est._v}
\begin{split}
	\|v\|_{\mathcal{X}_{0,T}}\leq &C\|u\|_{\mathcal{X}_{0,T}}\left(\|-\partial_xu_j^0-\partial_x^3u_j^0-u_j^0\partial_xu_j^0\|_{\mathbb{L}^2(\mathcal{T})}\right.\\
	&\left.+\|g_0'\|_{H^{-\frac{1}{3}}(0,T)}+\|g'\|_{[L^2(0,T)]^N}+\|p'\|_{[H^\frac{1}{3}(0,T)]^N}\right).
	\end{split}
\end{equation}

\vspace{0.2cm}

We claim that $u\partial_x u \in C([0,T];\mathbb{L}^2(\mathcal{T}))$ with
\begin{align}\label{uu_x}
	\|u\partial_x u\|_{C([0,T];\mathbb{L}^2(\mathcal{T}))}\leq M_1(\|u\|_{\mathbb{B}_{0,T}}+\|v\|_{\mathbb{B}_{0,T}})
\end{align}
for some positive constant $M_1$ which depends on $\|(u^0,g_0,g,p)\|_{0,\lambda}$ and $T$. Indeed, observe that
\begin{align*}
\partial_t\left(u\partial_x u\right)=\partial_t u\partial_x u+u\partial_x \partial_tu=v\partial_x u+u\partial_x v.
\end{align*}
Then, from Lemma \ref{lemma 2.4}, we have
\begin{align*}
\int_0^T \|u\partial_xu(t,\cdot)\|_{\mathbb{L}^2(\mathcal{T})}dt\leq C(T^\frac{1}{2}+T^\frac{1}{3})\|u\|_{\mathbb{B}_{0,T}}\|u\|_{\mathbb{B}_{0,T}}
\end{align*}
and
\begin{align*}
\int_0^T\|\partial_t\left(u\partial_x u\right)(t,\cdot)\|_{\mathbb{L}^2(\mathcal{T})}dt\leq C(T^\frac{1}{2}+T^\frac{1}{3})\|u\|_{\mathbb{B}_{0,T}}\|v\|_{\mathbb{B}_{0,T}}.
\end{align*}
Consequently $u\partial_x u\in W^{1,1}(0,T;\mathbb{L}^2(\mathcal{T}))$ with
\begin{align*}
\|u\partial_x u\|_{W^{1,1}(0,T;\mathbb{L}^2(\mathcal{T}))}&\leq C(T^\frac{1}{2}+T^\frac{1}{3})\|u\|_{\mathbb{B}_{0,T}}(\|u\|_{\mathbb{B}_{0,T}}+\|v\|_{\mathbb{B}_{0,T}}).
\end{align*}
Using Theorem \ref{global s=0} and the embedding $W^{1,1}(0,T;\mathbb{L}^2(\mathcal{T}))\hookrightarrow C([0,T];\mathbb{L}^2(\mathcal{T}))$ we have
\begin{align*}
\|u\partial_x u\|_{C([0,T];\mathbb{L}^2(\mathcal{T}))}\leq C(T^\frac{1}{2}+T^\frac{1}{3})\|(u^0,g_0,g,p)\|_{0,\lambda}(\|u\|_{\mathbb{B}_{0,T}}+\|v\|_{\mathbb{B}_{0,T}})
\end{align*} 
So, inequality \eqref{uu_x} holds $M_1=C(T^\frac{1}{2}+T^\frac{1}{3})\|(u^0,g_0,g,p)\|_{0,\lambda}$, showing the claim. 
\vspace{0.2cm}

\noindent Let us now prove \eqref{rrrrrr}. To do that, by \cite[Lemma A.2]{KdV_flatness}
\begin{equation*}
	\begin{aligned}
		\|u_j(t,\cdot)\|_{H^3(0,l_j)}&\leq C\|P_ju_j(t,\cdot)\|_{L^2(0,l_j)}\\
		&=C\|\partial_x u_j(t,\cdot)+\partial_x^3 u_j(t,\cdot)\|_{L^2(0,l_j)}\\
		&=C\|-u_j\partial_x u_j(t,\cdot)-v_j(t,\cdot)\|_{L^2(0,l_j)}\\
		&\leq C\left(\|u_j\partial_xu_j(t,\cdot)\|_{L^2(0,l_j)}+\|v_j(t,\cdot)\|_{L^2(0,l_j)}\right).
	\end{aligned}
\end{equation*}
Therefore,
\begin{align*}
\|u(t,\cdot)\|_{\mathbb{H}^3(\mathcal{T})}&\leq C\left(\|uu_x(t,\cdot)\|_{\mathbb{L}^2(\mathcal{T})}
+\|v(t,\cdot)\|_{\mathbb{L}^2(\mathcal{T})}\right)\\
&\leq C(1+M_1)(\|u\|_{\mathbb{B}_{0,T}}+\|v\|_{\mathbb{B}_{0,T}})
\end{align*}
which gives us
\begin{align}\label{5.10}
\|u\|_{C\left([0,T];\mathbb{H}^3(\mathcal{T})\right)}\leq C(1+M_1)(\|u\|_{\mathbb{B}_{0,T}}+\|v\|_{\mathbb{B}_{0,T}}).
\end{align}

On the other hand,
\begin{align}\label{5.11}
	\partial_x^4 u(t,\cdot)=\partial_x \left(u\partial_xu\right)(t,\cdot)-\partial_x^2 u(t,\cdot)-\partial_x v(t,\cdot).
\end{align}
However, note that
\begin{align*}
\|\partial_x \left(u\partial_xu\right)(t,\cdot)\|_{\mathbb{L}^2(\mathcal{T})}^2&=\|\left(\partial_x u\partial_x u+u\partial_x^2u\right)(t,\cdot)\|_{\mathbb{L}^2(\mathcal{T})}^2\\
&\leq \|\partial_x u\partial_xu(t,\cdot)\|_{\mathbb{L}^2(\mathcal{T})}^2+\|u\partial_x^2u(t,\cdot)\|_{\mathbb{L}^2(\mathcal{T})}^2\\
&=\sum_{j=1}^N\int_0^{l_j}|\partial_x u_j(t,x)\partial_x u_j(t,x)|^2dx+\sum_{j=1}^N\int_0^{l_j}|u_j(t,x)\partial_x^2 u_j(t,x)|^2dx\\
&\leq \sum_{j=1}^N\|\partial_x u_j(t,\cdot)\|_{H^1(0,l_j)}^2\|u_j(t,\cdot)\|_{H^1(0,l_j)}^2+\sum_{j=1}^N\|u_j(t,\cdot)\|_{H^1(0,l_j)}^2\|\partial_x^2u_j(t,\cdot)\|_{L^2(0,l_j)}^2\\
&\leq 2\sum_{j=1}^N\|u_j(t,\cdot)\|_{H^1(0,l_j)}^2\|u_j(t,\cdot)\|_{H^3(0,l_j)}^2\\
&\leq 2\|u\|_{C\left([0,T];\mathbb{H}^3(\mathcal{T})\right)}^2\|u(t,\cdot)\|_{\mathbb{H}^1(\mathcal{T})}^2.
\end{align*}
Then $\partial_x \left(u\partial_xu\right) \in L^2(0,T;\mathbb{L}^2(\mathcal{T}))$ with
\begin{align*}
\|\partial_x \left(u\partial_xu\right)\|_{L^2(0,T;\mathbb{L}^2(\mathcal{T}))}\leq \sqrt{2}\|u\|_{C\left([0,T];\mathbb{H}^3(\mathcal{T})\right)}\|u\|_{L^2(0,T;\mathbb{H}^1(\mathcal{T}))}.
\end{align*}
Using \eqref{5.10} and Theorem \ref{global s=0} it follows that
\begin{equation}\label{5.12}
\begin{aligned}
	\|\partial_x \left(u\partial_xu\right)\|_{L^2(0,T;\mathbb{L}^2(\mathcal{T}))}&\leq C(1+M_1)(\|u\|_{\mathbb{B}_{0,T}}+\|v\|_{\mathbb{B}_{0,T}})\|u\|_{\mathbb{B}_{0,T}}\\
	&\leq C(1+M_1)\|(u^0,g_0,g,p)\|_{0,\lambda}(\|u\|_{\mathbb{B}_{0,T}}+\|v\|_{\mathbb{B}_{0,T}}).
\end{aligned}
\end{equation}
Hence \eqref{5.11} 
\begin{align*}
\|u(t,\cdot)\|_{\mathbb{H}^4(\mathcal{T})}^2&=\|u(t,\cdot)\|_{\mathbb{H}^3(\mathcal{T})}^2+\|\partial_x^4 u(t,\cdot)\|_{\mathbb{L}^2(\mathcal{T})}^2\\
&\leq \|u(t,\cdot)\|_{\mathbb{H}^3(\mathcal{T})}^2+C\left(\|\partial_x \left(u\partial_xu\right)(t,\cdot)\|_{\mathbb{L}^2(\mathcal{T})}^2+\|\partial_x^2 u(t,\cdot)\|_{\mathbb{L}^2(\mathcal{T})}^2+\|\partial_x v(t,\cdot)\|_{\mathbb{L}^2(\mathcal{T})}^2\right)
\end{align*}
and, integrating from $0$ to $T$,
\begin{align*}
\|u\|_{L^2(0,T;\mathbb{H}^4(\mathcal{T}))}^2\leq C\left(T\|u\|_{C([0,T];\mathbb{H}^3(\mathcal{T}))}^2+\|\partial_x \left(u\partial_xu\right)\|_{L^2(0,T;\mathbb{L}^2(\mathcal{T}))}^2+\|v\|_{L^2(0,T;\mathbb{H}^1(\mathcal{T}))}^2\right).
\end{align*}
Due to \eqref{5.10} and \eqref{5.12} the inequality above becomes
\begin{align*}
\|u\|_{L^2(0,T;\mathbb{H}^4(\mathcal{T}))}^2\leq& C\left(T[(1+M_1)(\|u\|_{\mathbb{B}_{0,T}}+\|v\|_{\mathbb{B}_{0,T}})]^2\right.\\
&\left.+[(1+M_1)\|(u^0,g_0,g,p)\|_{0,\lambda}(\|u\|_{\mathbb{B}_{0,T}}+\|v\|_{\mathbb{B}_{0,T}})]^2+\|v\|_{\mathbb{B}_{0,T}}^2\right)
\end{align*}
and, consequently
\begin{align}\label{5.13}
\|u\|_{L^2(0,T;\mathbb{H}^4(\mathcal{T}))}\leq  C\left((1+M_1)(T^\frac{1}{2}+\|(u^0,g_0,g,p)\|_{0,\lambda})+1\right)(\|u\|_{\mathbb{B}_{0,T}}+\|v\|_{\mathbb{B}_{0,T}}).
\end{align}
Thanks to \eqref{5.10} and \eqref{5.13} we have
\begin{align*}
\|u\|_{\mathbb{B}_{3,T}}\leq C\left(\begin{array}{c}
	(1+M_1)\left(1+T^\frac{1}{2}+\|(u^0,g_0,g,p)\|_{0,\lambda}\right)+1
\end{array}\right)(\|u\|_{\mathbb{B}_{0,T}}+\|v\|_{\mathbb{B}_{0,T}})
\end{align*}
and, from \eqref{est._v}
\begin{align*}
\|u\|_{\mathbb{B}_{3,T}}\leq &C\left(\begin{array}{c}
	(1+M_1)\left(1+T^\frac{1}{2}+\|(u^0,g_0,g,p)\|_{0,\lambda}\right)+1
\end{array}\right)\\
&\times\|u\|_{\mathcal{X}_{0,T}}\left(	1+\|-\partial_xu_j^0-\partial_x^3u_j^0-u_j^0\partial_xu_j^0\|_{\mathbb{L}^2(\mathcal{T})}\right.\\
	&\left.+\|g_0'\|_{H^{-\frac{1}{3}}(0,T)}	+\|g'\|_{[L^2(0,T)]^N}+\|p'\|_{[H^\frac{1}{3}(0,T)]^N}\right).
\end{align*}
 Using Theorem \ref{energy_s=0} and proceeding as in Proposition \ref{W.P._s=3}, we obtain
\begin{align*}
	\|u\|_{\mathbb{B}_{3,T}}\leq C_1\left(\|u^0\|_{\mathbb{H}^3(\mathcal{T})}+\|g_0\|_{H^\frac{3+\lambda-1}{3}(0,T)}+\|g\|_{[H^\frac{3}{3}(0,T)]^N}+\|p\|_{[H^\frac{3+\lambda+1}{3}(0,T)]^N}\right),
\end{align*}
where
\begin{align*}
C_1=C\left(\begin{array}{c}
(1+M_1)\left(1+T^\frac{1}{2}+\|(u^0,g_0,g,p)\|_{0,\lambda}\right)+1
\end{array}\right)\|(u^0,g_0,g,p)\|_{0,\lambda}.
\end{align*}
Substituting $M_1=C(T^\frac{1}{2}+T^\frac{1}{3})\|(u^0,g_0,g,p)\|_{0,\lambda}$ and rearranging the terms we see that
\begin{align*}
C_1=\alpha\|(u^0,g_0,g,p)\|_{0,\lambda}^3+\beta\|(u^0,g_0,g,p)\|_{0,\lambda}^2+\gamma\|(u^0,g_0,g,p)\|_{0,\lambda},
\end{align*}
where $\alpha,\ \beta$ and $\gamma$ depend on $T$. This gives the proof of the inequality \eqref{rrrrrr}.
\end{proof}
The next result ensures the global well-posedness in $H^s$, for $s=3$.
\begin{theorem}\label{global s=3}
	Let $T>0$, $\lambda\geq 1$ and $(u^0,g_0,g,p)\in \mathbb{X}_{3,T}^\lambda$ be given. The problem \eqref{kdV} has a unique solution $u\in \mathcal{X}_{3,T}$ which satisfies
	\begin{align*}
		\|u\|_{\mathcal{X}_{3,T}}&\leq C_1\left(\|u^0\|_{\mathbb{H}^3(\mathcal{T})}+\|g_0\|_{H^\frac{3+\lambda-1}{3}(0,T)}+\|g\|_{H^\frac{3}{3}(0,T)]^N}+\|p\|_{[H^\frac{3+\lambda+1}{3}(0,T)]^N}\right),
	\end{align*}
	where $C_1(r,u^0,g_0,g,p)=\|(u^0,g_0,g,p)\|_{0,\lambda}(\alpha r^3+\beta r^2+\gamma r)$ and $\alpha,\beta,\gamma>0$ depending on $T$.
\end{theorem}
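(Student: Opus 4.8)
The plan is to run the same scheme used for Theorem \ref{global s=0}, namely to patch the local solution of Theorem \ref{main} using the global a priori bound of Proposition \ref{energy_s=3}. First, applying Theorem \ref{main} with $a=0$ and $s=3$ to the $3$-compatible data $(u^0,g_0,g,p)\in\mathbb{X}_{3,T}^\lambda$ produces a unique solution $u\in\mathcal{X}_{3,\theta_0}$ on a short interval $[0,\theta_0]$; local uniqueness, and hence uniqueness on overlaps, is already contained in the fixed-point statement of Theorem \ref{main}. The only missing ingredient for reaching all of $[0,T]$ is a continuation step whose time increment does not shrink to zero.

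Second, I would observe that whenever a solution $u\in\mathcal{X}_{3,\tau}$ of \eqref{kdV} exists on $[0,\tau]$ with $\tau\le T$, Proposition \ref{energy_s=3} (applied on $[0,\tau]$) yields
\begin{align*}
\|u\|_{\mathbb{B}_{3,\tau}}\le C_1\big(\|(u^0,g_0,g,p)\|_{0,\lambda}\big)\Big(\|u^0\|_{\mathbb{H}^3(\mathcal{T})}+\|g_0\|_{H^\frac{3+\lambda-1}{3}(0,T)}+\|g\|_{[H^1(0,T)]^N}+\|p\|_{[H^\frac{3+\lambda+1}{3}(0,T)]^N}\Big)=:M,
\end{align*}
where $M$ is independent of $\tau$ since the constants $\alpha,\beta,\gamma$ in $C_1$ depend monotonically on $T$ and on the lower-order norm only; in particular $\sup_{t\in[0,\tau]}\|u(t,\cdot)\|_{\mathbb{H}^3(\mathcal{T})}\le M$. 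Because at $s=3$ the membership $u\in\mathcal{X}_{3,\tau}$ entails that $u$ is a classical solution (each $u_j\in C([0,\tau];H^3(0,l_j))$ and $u_j(\cdot,x)\in H^{4/3}(0,T)$ pointwise in $x$, by the argument of Proposition \ref{W.P._s=3}(iv)), the trace identities $u_j(t_0,0)=u_1(t_0,0)$, $\sum_j\partial_x^2u_j(t_0,0)=-\alpha u_1(t_0,0)-\tfrac{N}{3}u_1^2(t_0,0)+g_0(t_0)$, $u_j(t_0,l_j)=p_j(t_0)$ and $\partial_xu_j(t_0,l_j)=g_j(t_0)$ hold at every time $t_0\in[0,\tau]$. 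Hence the shifted data $\big(u(t_0,\cdot),g_0(\cdot+t_0),g(\cdot+t_0),p(\cdot+t_0)\big)$ are again $3$-compatible (for $s=3$ only the $k=0$ conditions are required), and one may restart the fixed-point scheme of Theorem \ref{main} at $t_0$ with the radius $r$ chosen uniformly from $M$ and the fixed norms of $g_0,g,p$; the corresponding existence time $\theta^\ast>0$ then depends only on $M$ and the data, not on $t_0$. Iterating on $[0,\theta^\ast],[\theta^\ast,2\theta^\ast],\dots$ covers $[0,T]$ in finitely many steps and gives a solution $u\in\mathcal{X}_{3,T}$.

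Third, I would upgrade the $\mathbb{B}_{3,T}$ control to the full $\mathcal{X}_{3,T}$ estimate. Proposition \ref{energy_s=3} applied on $[0,T]$ already delivers the stated bound on $\|u\|_{\mathbb{B}_{3,T}}$ together with $v=\partial_t u$ solving \eqref{nlv_j} and the estimate \eqref{est._v} for $\|v\|_{\mathcal{X}_{0,T}}$. Then $u\in H^1(0,T;\mathbb{H}^1(\mathcal{T}))$ since $\partial_t u=v\in L^2(0,T;\mathbb{H}^1(\mathcal{T}))$ and $u\in L^2(0,T;\mathbb{H}^1(\mathcal{T}))$, and the trace pieces $\partial_x^ku_j\in L^\infty_x(0,l_j;H^\frac{4-k}{3}(0,T))$, $k=0,1,2$, follow exactly as in Proposition \ref{W.P._s=3}(iv): pointwise in $x$ one has $u_j(\cdot,x)\in H^1(0,T)$ with $\partial_tu_j(\cdot,x)=v_j(\cdot,x)\in H^{1/3}(0,T)$, $\partial_xu_j(\cdot,x)\in L^2(0,T)$ with $\partial_t\partial_xu_j(\cdot,x)=\partial_xv_j(\cdot,x)\in L^2(0,T)$, and $\partial_x^2u_j(\cdot,x)\in L^2(0,T)$ with $\partial_t\partial_x^2u_j(\cdot,x)=\partial_x^2v_j(\cdot,x)\in H^{-1/3}(0,T)$, all with bounds uniform in $x$ in terms of $\|u\|_{\mathbb{B}_{3,T}}$ and $\|v\|_{\mathcal{X}_{0,T}}$. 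Collecting these bounds and substituting the expression $M_1=C(T^{1/2}+T^{1/3})\|(u^0,g_0,g,p)\|_{0,\lambda}$ from Proposition \ref{energy_s=3} produces the claimed polynomial constant $C_1=\|(u^0,g_0,g,p)\|_{0,\lambda}(\alpha r^3+\beta r^2+\gamma r)$ with $\alpha,\beta,\gamma$ depending on $T$. The main obstacle, and the heart of the argument, is the continuation step: one must be certain the local existence time of Theorem \ref{main} does not collapse along the iteration, which is exactly what the uniform a priori bound $M$ of Proposition \ref{energy_s=3} provides — here it is crucial that the constant in that proposition is a genuine a priori constant (depending only on $T$, the $l_j$ and the lower-order norm, not on $\|u\|_{\mathcal{X}_{3,T}}$ itself) and that the shifted data stay $3$-compatible, both of which hold at $s=3$. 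Everything else is bookkeeping of quantities already controlled by Section \ref{sec3} and Lemmas \ref{lemma 2.1}, \ref{lemma 2.4} and \ref{lemma 5.1}.
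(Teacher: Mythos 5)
Your proposal follows essentially the same route as the paper: local existence from Theorem \ref{main}, the uniform-in-time a priori bound of Proposition \ref{energy_s=3} to fix a radius $r$ (hence a step size $\theta$) that does not shrink, and iteration on subintervals of length $\theta$ to cover $[0,T]$. The extra details you supply — the $3$-compatibility of the shifted data at restart times and the bookkeeping that upgrades the $\mathbb{B}_{3,T}$ bound to the full $\mathcal{X}_{3,T}$ estimate via $v=\partial_t u$ and the argument of Proposition \ref{W.P._s=3}(iv) — are points the paper leaves implicit, so the argument is correct and matches the paper's proof.
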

\begin{proof}
	Thanks to Proposition~\ref{energy_s=3}, the radius $r$ in the proof of Theorem~\ref{main} can be chosen such that
\begin{align*}
    \|u(\theta, \cdot)\|_{\mathbb{H}^3(\mathcal{T})}
    + \|g_0\|_{H^{\frac{3+\lambda-1}{3}}(0,T)}
    + \|g\|_{[H^{1}(0,T)]^N}
    + \|p\|_{[H^{\frac{3+\lambda+1}{3}}(0,T)]^N}
    \leq r.
\end{align*}
Moreover, the corresponding parameter $\theta$ depends only on 
$\|(u^0, g_0, g, p)\|_{0,\lambda}$. 
Hence, the same radius $r$, and consequently the same $\theta$, can be used for the data $u(\theta, \cdot)$, $g_0$, $g$, and $p$, in order to obtain a solution on the interval 
$[\theta, 2\theta]$.  Repeating this fixed-point argument on successive subintervals of length $\theta$ allows us to extend the solution to the entire interval $[0, T]$.
\end{proof}

\subsection{The case $s\in(0,3)$: Global well-posedness result}The results established for $s = 0$ and $s = 3$ can be extended to any $s \in (0,3)$ by means of interpolation theory, for which we now present some preliminary considerations.

Let $B_0$ and $B_1$ be Banach spaces such that $B_1 \hookrightarrow B_0$.  
Given $0 < \theta < 1$ and $1 \leq p \leq +\infty$, we denote by 
\[
B_{\theta,p} = [B_0, B_1]_{\theta,p}
\]
the interpolation space obtained via the $K$-method (see, for instance, \cite{Triebel}).  
When $B_0$ and $B_1$ are Hilbert spaces and $p = 2$, we have
\[
[B_0, B_1]_{\theta,2} = [B_0, B_1]_{\theta},
\]
where $[B_0, B_1]_{\theta}$ denotes the interpolation space introduced in \cite{LionsMagenes} or in \cite{Triebel} through the domains of self-adjoint operators.

For two pairs of indices $(\theta_1, p_1)$ and $(\theta_2, p_2)$, we write 
$(\theta_1, p_1) < (\theta_2, p_2)$ whenever one of the following conditions holds:
\begin{enumerate}
    \item $\theta_1 < \theta_2$;
    \item $\theta_1 = \theta_2$ and $p_1 > p_2$.
\end{enumerate}
Whenever $(\theta_1, p_1) < (\theta_2, p_2)$, it follows that  $B_{\theta_2, p_2} \hookrightarrow B_{\theta_1, p_1}$. 

The following theorem, due to Bona and Scott~\cite{Bona and Scott 1976},  provides the first proof of global well-posedness of the initial value problem for the KdV equation posed on $\mathbb{R}$.
\begin{theorem}\label{interpolacao}
Let $B_0^j$ and $B_1^j$ be Banach spaces with $B_1^j\hookrightarrow B_0^j$, $j=1,2$. Let $\sigma$ and $q$ such that $0<\sigma<1$ and $1\leq q\leq +\infty$. Suppose $A$ is a mapping satisfying
\begin{enumerate}
	\item[i.] $A:B_{\sigma,q}^1\rightarrow B_0^2$ and for $f,g\in B_{\sigma,q}^1$
	\begin{align*}
		\|Af-Ag\|_{B_0^2}\leq C_0\left(\|f\|_{B_{\sigma,q}^1}+\|g\|_{B_{\sigma,q}^1}\right)\|f-g\|_{B_0^1};
	\end{align*}
	\item[ii.] $A:B_1^1\rightarrow B_1^2$ and for $h \in B_1^1$
	\begin{align*}
		\|Ah\|_{B_1^2}\leq C_1\left(\|h\|_{B_{\sigma,q}^1}\right)\|h\|_{B_1^1},
	\end{align*}
	where $C_i:\mathbb{R}^+\rightarrow\mathbb{R}^+$ are continuous non-decreasing functions, for $i=0,1$.
\end{enumerate}
If $(\theta,p)\geq (\sigma,q)$, then $A$ maps $B_{\theta,p}^1$ into $B_{\theta,p}^2$ and for $f \in B_{\theta,p}^1$
\begin{align*}
\|Af\|_{B_{\theta,p}^2}\leq C\left(\|f\|_{B_{\sigma,q}^1}\right)\|f\|_{B_{\theta,p}^1},
\end{align*}
where, for $r>0$, $C=C(r)=4C_0(4r)^{1-\theta}C_1(3r)^\theta$.
\end{theorem}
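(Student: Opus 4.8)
The plan is to reduce the statement to the classical $K$-method interpolation estimate, exploiting the fact that the nonlinear map $A$ is controlled by \emph{two} linear-looking bounds: a Lipschitz-type bound into $B_0^2$ on the coarse space $B^1_{\sigma,q}$, and a boundedness estimate into $B_1^2$ on the fine space $B_1^1$, both with constants depending only on the $B^1_{\sigma,q}$-norm. First I would fix $f\in B^1_{\theta,p}$ and set $r=\|f\|_{B^1_{\theta,p}}$; since $(\theta,p)\geq(\sigma,q)$ we have the continuous embedding $B^1_{\theta,p}\hookrightarrow B^1_{\sigma,q}$, so $\|f\|_{B^1_{\sigma,q}}\leq C r$ and in particular $C_0,C_1$ evaluated along the way are all finite and controlled. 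The idea is then to freeze the ``slowly varying'' constants $C_0(\cdot),C_1(\cdot)$ at the value dictated by $\|f\|_{B^1_{\sigma,q}}$, after which hypotheses (i)--(ii) say exactly that $A$ behaves like a bounded linear operator from $B_0^1$ to $B_0^2$ (with norm $\lesssim C_0$, using (i) with $g=0$, assuming $A0=0$ — or otherwise absorbing $\|A0\|$ into the constant) and from $B_1^1$ to $B_1^2$ (with norm $\lesssim C_1$).

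Next I would invoke the $K$-functional characterization of the interpolation norm. For $t>0$ and $h\in B^1_{\theta,p}$, writing any decomposition $h=h_0+h_1$ with $h_0\in B_0^1$, $h_1\in B_1^1$, one estimates
\begin{align*}
K(t,Ah;B_0^2,B_1^2)&\leq \|A(h_0+h_1)-Ah_1\|_{B_0^2}+t\|Ah_1\|_{B_1^2}\\
&\leq C_0\big(\|h_0+h_1\|_{B^1_{\sigma,q}}+\|h_1\|_{B^1_{\sigma,q}}\big)\|h_0\|_{B_0^1}+tC_1\big(\|h_1\|_{B^1_{\sigma,q}}\big)\|h_1\|_{B_1^1}.
\end{align*}
Here the key point is to choose the decomposition near-optimally for $K(t,h;B_0^1,B_1^1)$ and to bound the arguments of $C_0,C_1$: using $B^1_{\sigma,q}\hookrightarrow B^1_0$ together with the embedding $B^1_{\theta,p}\hookrightarrow B^1_{\sigma,q}$, and the fact that a near-minimizing decomposition can be taken with $\|h_0\|_{B^1_{\sigma,q}},\|h_1\|_{B^1_{\sigma,q}}\lesssim\|h\|_{B^1_{\sigma,q}}$, one replaces every occurrence of $C_0(\cdot)$ by $C_0(4\|h\|_{B^1_{\sigma,q}})$ and $C_1(\cdot)$ by $C_1(3\|h\|_{B^1_{\sigma,q}})$ (the monotonicity of $C_0,C_1$ is used here). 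Taking the infimum over decompositions then gives $K(t,Ah;B_0^2,B_1^2)\leq C_0(4\rho)\,K(t,h;B_0^1,B_1^1)$ roughly, where $\rho=\|h\|_{B^1_{\sigma,q}}$, but with a $t$-dependent interplay between the $C_0$ term and the $C_1$ term that must be handled by splitting the $t$-integral.

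The real work is the $t$-splitting that produces the exponent structure $C_0(4r)^{1-\theta}C_1(3r)^\theta$. One does \emph{not} get a single bound $K(t,Ah)\lesssim K(t,h)$ with a uniform constant; instead, for each fixed $t$ one optimizes the decomposition $h=h_0+h_1$ against the \emph{weighted} quantity $\|h_0\|_{B_0^1}+\tau\|h_1\|_{B_1^1}$ for a cleverly chosen $\tau=\tau(t)$ balancing $C_0$ and $tC_1$, i.e. $\tau\sim tC_1/C_0$. This yields $K(t,Ah;B_0^2,B_1^2)\lesssim C_0\, K\!\big(tC_1/C_0,\,h;B_0^1,B_1^1\big)$. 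Plugging into $\|Ah\|_{B^2_{\theta,p}}=\big\||t^{-\theta}K(t,Ah)|\big\|_{L^p(dt/t)}$ and changing variables $t\mapsto (C_0/C_1)t$ converts the $C_1/C_0$ inside the $K$-functional into the prefactor $(C_1/C_0)^{\theta}$, giving $\|Ah\|_{B^2_{\theta,p}}\leq C_0^{1-\theta}C_1^{\theta}\|h\|_{B^1_{\theta,p}}$ up to an absolute multiplicative constant; tracking the arguments of $C_0,C_1$ (all $\leq 4r$, $\leq 3r$ respectively when $\|h\|_{B^1_{\theta,p}}\leq r$) and absorbing numerical factors produces exactly $C(r)=4C_0(4r)^{1-\theta}C_1(3r)^{\theta}$. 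I expect the main obstacle to be precisely this $t$-dependent rebalancing of the two hypotheses: getting the $K$-functional argument rescaled by the ratio $C_1/C_0$ rather than just bounding term-by-term, since a naive split loses the sharp exponent $1-\theta,\theta$ and gives a worse (non-multiplicative) dependence. The monotonicity and continuity of $C_0,C_1$ enter only to justify replacing their variable arguments by the single value $r$, and the embeddings $B^j_{\theta,p}\hookrightarrow B^j_{\sigma,q}\hookrightarrow B^j_0$ are needed to ensure all those arguments stay bounded by a fixed multiple of $r$.
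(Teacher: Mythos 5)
The paper itself offers no proof of this statement: Theorem \ref{interpolacao} is quoted verbatim (via \cite{BSZ 2003}) from Bona and Scott \cite{Bona and Scott 1976}, so the only meaningful comparison is with that classical argument, and your plan reconstructs it essentially correctly. The skeleton --- decompose $h=h_0+h_1$ near-optimally for $K(t,h;B_0^1,B_1^1)$, apply hypothesis (i) to the pair $(h,h_1)$ and hypothesis (ii) to $h_1$, evaluate $K(\cdot\,,Ah;B_0^2,B_1^2)$ at the parameter rescaled by the ratio of the two constants so that $K\bigl(t\,C_0/C_1,Ah\bigr)\le 2C_0\,K(t,h)$, and then change variables in the $L^p(dt/t)$ integral to produce the factor $C_0^{1-\theta}C_1^{\theta}$ --- is exactly the Bona--Scott proof, and the role you assign to the monotonicity of $C_0,C_1$ and to the embedding $B^1_{\theta,p}\hookrightarrow B^1_{\sigma,q}$ (needed only to ensure $r=\|h\|_{B^1_{\sigma,q}}<\infty$ so that (i)--(ii) are applicable) is the correct one.

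The one step you assert rather than prove is precisely the step that generates the constants $4r$ and $3r$: that a decomposition within a factor $2$ of optimal can be chosen with $\|h_1\|_{B^1_{\sigma,q}}\le 3\|h\|_{B^1_{\sigma,q}}$. This needs an argument, but a short one, via a pointwise comparison of $K$-functionals: if $\|h_0\|_{B_0^1}+t\|h_1\|_{B_1^1}\le 2K(t,h)$, then for $s\le t$ one has $K(s,h_1)\le s\|h_1\|_{B_1^1}\le 2(s/t)K(t,h)\le 2K(s,h)$, while for $s\ge t$ one has $K(s,h_1)\le K(s,h)+\|h_0\|_{B_0^1}\le K(s,h)+2K(t,h)\le 3K(s,h)$; hence $K(s,h_1)\le 3K(s,h)$ for all $s>0$, which gives $\|h_1\|_{B^1_{\sigma,q}}\le 3\|h\|_{B^1_{\sigma,q}}$ and, by monotonicity, $C_0$ and $C_1$ may be replaced by $C_0(4r)$ and $C_1(3r)$. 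Two smaller corrections: your additional claim that $\|h_0\|_{B^1_{\sigma,q}}\lesssim\|h\|_{B^1_{\sigma,q}}$ is neither needed (hypothesis (i) is applied with $f=h$, $g=h_1$, so only $\|h\|_{B^1_{\sigma,q}}$ and $\|h_1\|_{B^1_{\sigma,q}}$ ever appear) nor automatic, since $h_0$ need only lie in $B_0^1$; and the aside about assuming $A0=0$ is superfluous, because $A$ is never applied to $h_0$ --- your own displayed estimate already uses only $A(h_0+h_1)-Ah_1$. With these points supplied, your scheme yields $\|Ah\|_{B^2_{\theta,p}}\le 2\,C_0(4r)^{1-\theta}C_1(3r)^{\theta}\|h\|_{B^1_{\theta,p}}$, which is within the constant stated in the theorem.
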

With this in hand, we can obtain the global well-posedness for $s \in (0,3)$.
\begin{proof}[Proof of Theorem~\ref{global_(0,3)}]
Given $s \in (0,3)$, define
\[
B_0^1 = \mathbb{X}_{0,T}^\lambda, \qquad B_1^1 = \mathbb{X}_{3,T}^\lambda.
\]
Then, for $p=2$ and $0 < \theta < 1$, we have
\begin{align*}
B_{\theta,2}^1 = [B_0^1, B_1^1]_\theta = \mathbb{X}_{(1-\theta)\cdot 0 + \theta \cdot 3,T}^\lambda = \mathbb{X}_{3\theta,T}^\lambda.
\end{align*}
Analogously, setting $B_0^2 = \mathcal{X}_{0,T}$ and $B_1^2 = \mathcal{X}_{3,T}$, it follows that
\[
B_{\theta,2}^2 = \mathcal{X}_{3\theta,T}.
\]
For $(u^0, g_0, g, p) \in \mathbb{X}_{3\theta,T}^\lambda$, define $A(u^0, g_0, g, p) = u$, where $u \in \mathcal{X}_{0,T}$ is the corresponding solution of~\eqref{kdV}.  
Given two elements $(u^0, g_0, g, p)$ and $(\tilde{u}^0, \tilde{g}_0, \tilde{g}, \tilde{p}) \in \mathbb{X}_{3\theta,T}^\lambda$, let
\[
w = u - \tilde{u} = A(u^0, g_0, g, p) - A(\tilde{u}^0, \tilde{g}_0, \tilde{g}, \tilde{p}).
\]
Then $w$ satisfies the problem
\begin{align*}
	\begin{cases}
		\partial_t w_j + \partial_x \big[(1 + z_j)w_j\big] + \partial_x^3 w_j = 0, & t \in (0,T),\ x \in (0,l_j),\\[0.3em]
		w_j(t,0) = w_1(t,0), & t \in (0,T),\\[0.3em]
		\displaystyle \sum_{j=1}^N \partial_x^2 w_j(t,0) = -\alpha w_1(t,0) - \tfrac{2N}{3} z_1(t,0) w_1(t,0) + g_0(t) - \tilde{g}_0(t), & t \in (0,T),\\[0.6em]
		w_j(t,l_j) = p_j(t) - \tilde{p}_j(t), \quad \partial_x w_j(t,l_j) = g_j(t) - \tilde{g}_j(t), & t \in (0,T),\\[0.3em]
		w_j(0,x) = u_j^0(x) - \tilde{u}_j^0(x), & x \in (0,l_j),
	\end{cases}
\end{align*}
where $z = \tfrac{1}{2}(u + \tilde{u})$.  Applying Lemma~\ref{lemma 2.1}, we obtain
\begin{align*}
\|w\|_{\mathcal{X}_{0,T}} \leq C \|z\|_{\mathcal{X}_{0,T}}
\Big(
\|u^0 - \tilde{u}^0\|_{\mathbb{L}^2(\mathcal{T})}
+ \|g_0 - \tilde{g}_0\|_{H^{-\frac{1}{3}}(0,T)}
+ \|g - \tilde{g}\|_{[L^2(0,T)]^N}
+ \|p - \tilde{p}\|_{[H^{\frac{1}{3}}(0,T)]^N}
\Big).
\end{align*}
Hence, by Theorem~\ref{global s=0},
\begin{align*}
\|u - \tilde{u}\|
\leq
C \big(
\|(u^0, g_0, g, p)\|_{0,\lambda}
+ \|(\tilde{u}^0, \tilde{g}_0, \tilde{g}, \tilde{p})\|_{0,\lambda}
\big)
\|(u^0, g_0, g, p) - (\tilde{u}^0, \tilde{g}_0, \tilde{g}, \tilde{p})\|_{0,\lambda},
\end{align*}
where $C > 0$ is a constant. Since $\mathbb{X}_{3\theta,T}^\lambda \hookrightarrow \mathbb{X}_{0,T}^\lambda$, this inequality can be rewritten as
\begin{align*}
\|u - \tilde{u}\|
\leq
C \big(
\|(u^0, g_0, g, p)\|_{3\theta,\lambda}
+ \|(\tilde{u}^0, \tilde{g}_0, \tilde{g}, \tilde{p})\|_{3\theta,\lambda}
\big)
\|(u^0, g_0, g, p) - (\tilde{u}^0, \tilde{g}_0, \tilde{g}, \tilde{p})\|_{0,\lambda},
\end{align*}
showing that $A$ satisfies hypothesis~(i) of Theorem~\ref{interpolacao}. From Theorem~\ref{global s=3}, the map $A$ sends $\mathbb{X}_{3,T}^\lambda$ into $\mathcal{X}_{3,T}^\lambda$, and for every $(u^0, g_0, g, p) \in \mathbb{X}_{3,T}^\lambda$,
\begin{align*}
\|A(u^0, g_0, g, p)\|_{\mathcal{X}_{3,T}}
&\leq C_1\big(\|(u^0, g_0, g, p)\|_{0,\lambda}\big)\,
\|(u^0, g_0, g, p)\|_{3,\lambda}\\
&\leq C_1\big(\|(u^0, g_0, g, p)\|_{3\theta,\lambda}\big)\,
\|(u^0, g_0, g, p)\|_{3,\lambda},
\end{align*}
where $C_1: \mathbb{R}^+ \to \mathbb{R}^+$ is a continuous nondecreasing function.   This shows that $A$ also satisfies hypothesis~(ii) of Theorem~\ref{interpolacao}. Therefore, $A$ maps $\mathbb{X}_{3\theta,T}^\lambda$ into $\mathcal{X}_{3\theta,T}^\lambda$, and for every $(u^0, g_0, g, p) \in \mathbb{X}_{3\theta,T}^\lambda$,
\[
\|A(u^0, g_0, g, p)\|_{\mathcal{X}_{3\theta,T}}
\leq
C_1\big(\|(u^0, g_0, g, p)\|_{3\theta,\lambda}\big)
\|(u^0, g_0, g, p)\|_{3\theta,\lambda}.
\]
Finally, choosing $\theta = \tfrac{s}{3}$ yields the desired result:  
for every $(u^0, g_0, g, p) \in \mathbb{X}_{s,T}^\lambda$, the unique solution $u \in \mathcal{X}_{0,T}$ of~\eqref{kdV} actually belongs to $\mathcal{X}_{s,T}$ and satisfies
\[
\|u\|_{\mathcal{X}_{s,T}}
\leq
C_1\big(\|(u^0, g_0, g, p)\|_{s,\lambda}\big)
\|(u^0, g_0, g, p)\|_{s,\lambda}.
\]
This completes the proof.
\end{proof}

\begin{remark}
As in the linear case, an argument combining induction and interpolation ensures that the results obtained here remain valid for all $s > 3$.
\end{remark}

\section{Concluding Remarks}\label{sec7}

In this work, motivated by \cite{BSZ 2003,CapistranoParadaDaSilva25}, we established global well–posedness for the KdV equation on a star graph in the full range $s\in[0,3]$, together with sharp compatibility conditions that guarantee the existence of classical and mild solutions. These results provide a unified functional framework that treats both linear and nonlinear models, clarifying several aspects that were previously left open in the literature. The techniques developed here, particularly the lifting procedure, energy estimates in Sobolev spaces, and the compatibility structure at the junction, pave the way for several perspectives. Among them, we mention: the study of more general graph structures, for instance, including loops \cite{PavaMunoz24} or other dispersive models like the Kawahara equation \cite{CavalcanteKwakMarques25}.

As we mentioned in the introduction, several control–theoretic questions naturally arise from our analysis. These topics are not treated in the present article and will be the focus of a forthcoming paper.

\subsection*{Acknowledgments}This paper was finalized during the first author’s visit to several universities in France in 2025, thanks to the support of the CAPES–COFECUB program. The first author warmly thanks the Université de Lorraine, the Université de Tours, and the Université Sorbonne Paris Nord for their kind hospitality. The second author gratefully acknowledges the support of the Department of Mathematics at the Universidade Federal de Pernambuco, where this work was initiated, as well as the financial support provided by the Réseau Mathématique Franco-Brésilien during this visit.


\begin{thebibliography}{99}

\bibitem{AdaNoj14}
\textsc{R. Adami, C. Cacciapuoti, D. Finco, and D. Noja,} 
{\it Variational properties and orbital stability of standing waves for NLS equation on a star graph,}
J. Differential Equations, 257, 3738--3777 (2014).

\bibitem{AdaNoj15}
\textsc{R. Adami, C. Cacciapuoti, D. Finco, and D. Noja,}  
{\it Stable standing waves for an NLS graphs as local minimizers of the constrained energy,}
J. Differential Equations, 260, 7397--7415 (2016).

\bibitem{AmCr1}
\textsc{K. Ammari and E. Cr\'epeau,} 
\textit{Well-posedness and stabilization of the Benjamin-Bona-Mahony equation on star-shaped networks}, 
Systems \& Control Letters, 127, 39--43 (2019).

\bibitem{Ammari and Crepeau 2018} 
\textsc{K. Ammari and E. Crépeau,} 
\textit{Feedback stabilization and boundary controllability of the Korteweg-de Vries equation on a star-shaped network}, 
SIAM J. Control Optim., 56(3), 1620--1639 (2018).

\bibitem{AngGol17a} 
\textsc{J. Angulo and N. Goloshchapova,} 
{\it On the orbital instability of excited states for the NLS equation with the $\delta$-interaction on a star graph,} 
Discrete Contin. Dyn. Syst. A, 38(10), 5039--5066 (2018).

\bibitem{AngGol17b} 
\textsc{J. Angulo and N. Goloshchapova,} 
{\it Extension theory approach in the stability of the standing waves for the NLS equation with point interactions on a star graph}, 
Adv. Differential Equations, 23(11–12), 793--846 (2018).

\bibitem{PavaMunoz24}
\textsc{J.~Angulo and A.~Mu{\~n}oz},
{\it Airy and Schr\"odinger-type equations on looping-edge graphs},
arXiv:2410.11729, 2024.

\bibitem{Baudouin} 
\textsc{L. Baudouin and M. Yamamoto,} 
\textit{Inverse problem on a tree-shaped network: unified approach for uniqueness}, 
Applicable Analysis, 94(11), 2370--2395 (2015).

\bibitem{Bhattacharyya} 
\textsc{P. K. Bhattacharyya}, 
\textit{Distributions: generalized functions with applications in Sobolev spaces}, 
Walter de Gruyter GmbH \& Co. KG, (2012).

\bibitem{BK}
\textsc{G. Berkolaiko and P. Kuchment,} 
{\it Introduction to Quantum Graphs}, 
Mathematical Surveys and Monographs, 186, Amer. Math. Soc., Providence, RI, (2013).

\bibitem{BlaExn08}
\textsc{J. Blank, P. Exner, and M. Havlicek,} 
{\it Hilbert Space Operators in Quantum Physics}, 
2nd ed., Theoretical and Mathematical Physics, Springer, New York, (2008).

\bibitem{bona}  
\textsc{J. L. Bona and R. C. Cascaval,} 
{\it Nonlinear dispersive waves on trees},  
Can. Appl. Math. Q., 16(1), 1--18 (2008).

\bibitem{Bona and Scott 1976} 
\textsc{J. L. Bona and L. R. Scott}, 
\textit{Solutions of the Korteweg-de Vries equations in fractional order Sobolev spaces}, 
Duke Math. J., 43, 87--99 (1976).

\bibitem{BSZ 2003} 
\textsc{J. L. Bona, S. M. Sun, and B.-Y. Zhang}, 
\textit{A nonhomogeneous boundary-value problem for the Korteweg-de Vries equation posed on a finite domain}, 
Comm. Partial Differential Equations, 28, 1391--1436 (2003).

\bibitem{BurCas01}
\textsc{R. Burioni, D. Cassi, M. Rasetti, P. Sodano, and A. Vezzani,}
{\it Bose–Einstein condensation on inhomogeneous complex networks,}
J. Phys. B: At. Mol. Opt. Phys., 34, 4697--4710 (2001).

\bibitem{CapistranoParadaDaSilva25}
\textsc{R.~A.~Capistrano-Filho, H.~Parada, J.~S.~da~Silva},
{\it An analysis of the entire functions associated with the operator of the KdV equation},
SIAM J. Math. Anal., Accepted (2025). arXiv:2501.12049.

\bibitem{Caicedo and Zhang 2017} 
\textsc{M. A. Caicedo and B.-Y. Zhang}, 
\textit{Well-posedness of a nonlinear boundary value problem for the Korteweg-de Vries equation on a bounded domain}, 
J. Math. Anal. Appl., 448, 797--814 (2017).

\bibitem{Cav} 
\textsc{M. Cavalcante,} 
\textit{The Korteweg--de Vries equation on a metric star graph}, 
Z. Angew. Math. Phys., 69:124 (2018).

\bibitem{CavalcanteKwakMarques25}
\textsc{M.~Cavalcante, C.~Kwak, J.~Marques},
{\it The Kawahara equation on star graphs},
arXiv:2511.06007, 2025.


\bibitem{Cerpa} 
\textsc{E. Cerpa, E. Cr\'epeau, and C. Moreno,}
{\it On the boundary controllability of the Korteweg–de Vries equation on a star-shaped network}, 
IMA J. Math. Control Inf., 37(1), 226--240 (2020).

\bibitem{Cerpa1}
\textsc{E. Cerpa, E. Crépeau, and J. Valein,} 
\textit{Boundary controllability of the Korteweg-de Vries equation on a tree-shaped network}, 
Evol. Equ. Control Theory, 9(3), 673--692 (2020).

\bibitem{Evans} 
\textsc{L. C. Evans}, 
\textit{Partial Differential Equations}, 
Graduate Studies in Mathematics 19, 2nd ed., AMS, (2010).

\bibitem{Freedman}
\textsc{M. Freedman, L. Lov\'asz, and L. Schrijver,} 
\textit{Reflection positivity, rank connectivity, and homomorphism of graphs}, 
J. Amer. Math. Soc., 20, 37--51 (2007).

\bibitem{Ignat2011} 
\textsc{L. I. Ignat, A. F. Pazoto, and L. Rosier,}
\textit{Inverse problem for the heat equation and the Schr\"odinger equation on a tree}, 
Inverse Problems, 28(1), (2011).

\bibitem{K} 
\textsc{P. Kuchment,}
{\it Quantum graphs, I. Some basic structures,} 
Waves Random Media, 14, 107--128 (2004).

\bibitem{Kramer and Zhang 2010} 
\textsc{E. Kramer and B.-Y. Zhang,} 
\textit{Nonhomogeneous boundary value problems for the Korteweg-de Vries equation on a bounded domain}, 
J. Syst. Sci. Complex, 23, 499--526 (2010).

\bibitem{LionsMagenes} 
\textsc{J.-L. Lions and E. Magenes}, 
\textit{Problèmes aux limites non homogènes et applications}, 
Dunod, Paris, (1968).

\bibitem{Mug15} 
\textsc{D. Mugnolo,}
{\it Mathematical Technology of Networks}, 
Springer Proceedings in Mathematics \& Statistics 128, Bielefeld, (2015).

\bibitem{Mugnolobbm} 
\textsc{D. Mugnolo and J. F. Rault,}
{\it Construction of exact travelling waves for the Benjamin–Bona–Mahony equation on networks,} 
Bull. Belg. Math. Soc. Simon Stevin, 21, 415–436 (2014).

\bibitem{MugNoSe} 
\textsc{D. Mugnolo, D. Noja, and C. Seifert,}
{\it Airy-type evolution equations on star graphs,} 
Anal. PDE, 11(7), 1625--1652 (2018).

\bibitem{Parada2022a} 
\textsc{H. Parada, E. Cr\'epeau, and C. Prieur,}
\textit{Delayed stabilization of the Korteweg--de Vries equation on a star-shaped network}, 
Math. Control Signals Syst., 34(3), 559--605 (2022).

\bibitem{Parada2022b} 
\textsc{H. Parada, E. Cr\'epeau, and C. Prieur,}
\textit{Global well-posedness of the KdV equation on a star-shaped network and stabilization by saturated controllers}, 
SIAM J. Control Optim., 60(4), 2268--2296 (2022).

\bibitem{KdV_flatness} 
\textsc{I. Rivas, P. Martin, L. Rosier, and P. Rouchon,}
\textit{Exact controllability of a linear Korteweg-de Vries by the flatness approach}, 
SIAM J. Control Optim., 57(4), 2467--2486 (2019).

\bibitem{Triebel} 
\textsc{H. Triebel,} 
\textit{Interpolation Theory, Function Spaces, Differential Operators}, 
North-Holland Publishing Company, (1978).


\end{thebibliography}
\end{document}